\newtheorem{thmsub}{Theorem}[subsection]
\newtheorem{lemsub}[thmsub]{Lemma}
\newtheorem{corsub}[thmsub]{Corollary}
\newtheorem{propsub}[thmsub]{Proposition}
\newtheorem{defnsub}[thmsub]{Definition}
\newtheorem{notnsub}[thmsub]{Notation}
\newtheorem{remsub}[thmsub]{Remark}
\newtheorem{remssub}[thmsub]{Remarks}
\newtheorem{thm}{Theorem}[section]
\newtheorem*{thm*}{Theorem}
\newtheorem{lem}[thm]{Lemma}
\newtheorem{cor}[thm]{Corollary}
\newtheorem{prop}[thm]{Proposition}
\theoremstyle{definition}
\newtheorem{defn}[thm]{Definition}
\newtheorem{q}[thm]{Question}
\newtheorem{notn}[thm]{Notation}
\newtheorem{hyp}[thm]{Hypothesis}
\newtheorem*{notn*}{Notation}
\newtheorem*{hyp*}{Hypothesis}
\newtheorem{rem}[thm]{Remark}
\newtheorem*{rem*}{Remark}
\numberwithin{equation}{section}
\newcommand{\corref}[1]{Corollary~\textup{\ref{#1}}}
\newcommand{\lemref}[1]{Lemma~\textup{\ref{#1}}}
\newcommand{\propref}[1]{Proposition~\textup{\ref{#1}}}
\newcommand{\defnref}[1]{Definition~\textup{\ref{#1}}}
\newcommand{\remref}[1]{Remark~\textup{\ref{#1}}}
\newcommand{\hypref}[1]{Hypothesis~\textup{\ref{#1}}}
\newcommand{\midtext}[1]{\quad\text{#1}\quad}
\renewcommand{\and}{\midtext{and}}
\renewcommand{\)}{\textup)}
\newcommand{\ie}{\emph{i.e.}}
\newcommand{\cf}{\emph{cf.}}
\newcommand{\N}{\mathbb N}
\newcommand{\C}{\mathbb C}
\newcommand{\CC}{\mathcal C}
\newcommand{\DD}{\mathcal D}
\newcommand{\KK}{\mathcal K}
\newcommand{\EE}{\mathcal E}
\newcommand{\BB}{\mathcal B}
\newcommand{\RR}{\mathcal R}
\newcommand{\MM}{\mathcal M}
\newcommand{\NN}{\mathcal N}
\newcommand{\UU}{\mathcal U}
\renewcommand{\AA}{\mathcal A}
\renewcommand{\t}{\theta}
\renewcommand{\epsilon}{\varepsilon}
\newcommand{\D}{\Delta}
\DeclareMathOperator{\ad}{Ad}
\DeclareMathOperator{\obj}{Obj}
\DeclareMathOperator{\maxi}{Max}
\DeclareMathOperator{\nor}{Nor}
\DeclareMathOperator{\inc}{Inc}
\newcommand{\id}{\text{\textup{id}}}
\newcommand{\inv}{^{-1}}
\renewcommand{\bar}{\overline}
\newcommand{\what}{\widehat}
\newcommand{\cs}%
{\ensuremath{\mathbf{C^*}}}
\newcommand{\csnd}%
{\ensuremath{\cs\!\!_\mathbf{nd}}}
\newcommand{\coact}%
{\ensuremath{\mathbf{C^*coact}}}
\newcommand{\coactnd}%
{\ensuremath{\coact_\mathbf{nd}}}
\newcommand{\coactn}%
{\ensuremath{\coact^\mathbf{n}}}
\newcommand{\coactnnd}%
{\ensuremath{\coactn_\mathbf{nd}}}
\newcommand{\coactm}%
{\ensuremath{\coact^\mathbf{m}}}
\newcommand{\coactmnd}%
{\ensuremath{\coactm_\mathbf{nd}}}
\newcommand{\dn}{\downarrow}
\newcommand{\CP}{\textup{CP}}
\newcommand{\cp}{\CP}
\renewcommand{\coact}{\ensuremath{\CC(G)}}
\newcommand{\ncoact}{\ensuremath{\CC^n(G)}}
\newcommand{\mcoact}{\ensuremath{\CC^m(G)}}
\newtheorem{ppies}[thm]{Properties}
\def\AD{(A,\Delta)} 
\def\ADu{(\au,\Delta_{\rm u})}
\def\hr{{h_{{\rm r}}}} 
\def\au{{A_{{\rm u}}}}
\def\nuu#1{\|{#1\|}_{{\rm u}}}
\begin{document}
\title{Reflective-coreflective equivalence}
\author{Erik B\'edos}
\address{Institute of Mathematics, University of Oslo, P.B. 1053 Blindern, 0316 Oslo, Norway}
\email{bedos@math.uio.no}
\author{S. Kaliszewski}
\address{School of Mathematical and Statistical Sciences, Arizona State University, Tempe, AZ 85287}
\email{kaliszewski@asu.edu}
\author{John Quigg}
\address{School of Mathematical and Statistical Sciences, Arizona State University, Tempe, AZ 85287}
\email{quigg@asu.edu}
\date{Revised version, February~18, 2011}

\subjclass[2000]{Primary 18A40; Secondary 46L55, 46L89}

\keywords{adjoint functors, reflective and coreflective subcategories, equivalent categories, $C^*$-algebras, coactions, quantum groups}

\begin{abstract}
We explore a curious type of equivalence between certain pairs of reflective and coreflective subcategories. We illustrate with examples involving noncommutative duality for $C^*$-dynamical systems and compact quantum groups, as well as examples where the subcategories are actually isomorphic.
\end{abstract}

\maketitle

\section{Introduction}
\label{intro}

Our intent in writing this paper is to explore a special 
type of equivalence between certain pairs of reflective and coreflective subcategories.
We have noticed that in certain categories involving $C^*$-algebras, there is a pair of equivalent subcategories, one reflective and the other coreflective, and moreover this equivalence really depends only upon 
certain categorical properties, and not upon the theory of $C^*$-algebras.
To highlight the categorical nature of this phenomenon, we will present the equivalence from a purely abstract category-theoretical point of view, and then describe several examples from $C^*$-algebra theory.


To give an idea of what our equivalence entails, consider subcategories $\MM$ and $\NN$ of a category $\CC$. Letting  $\inc_\MM$ denote the inclusion functor
of $\MM$ into $\CC$ and  $F|_\MM = F \circ \inc_\MM: \MM \to \DD$ the restriction  of a functor $F:\CC\to\DD$, let us say that $\MM$ and $\NN$ are $\CC$\emph{-equivalent} if $\MM=\NN$ or  there exist functors $S:\CC\to\MM$, $T:\CC\to\NN$ such that $T|_\MM:\MM \to \NN$ and $S|_\NN:\NN\to\MM$ are quasi-inverses of each other. Clearly, $\MM$ and $\NN$ are then equivalent as categories in the usual sense and it is easy to check that $\CC$-equivalence is an equivalence relation. Now assume that $\MM$ is  coreflective  in $\CC$ and  $\NN$ is  reflective in $\CC$, with coreflector
$M:\CC\to\MM$ and reflector $N:\CC\to\NN$, respectively. 
The restriction  $N|_\MM$ is then a left adjoint of $M|_\NN$, and in order to prove that  $\MM$ and $\NN$ are $\CC$-equivalent, it suffices to show that  the adjunction $N|_\MM\dashv M|_\NN$ is an adjoint equivalence. When this happens may be characterized  in several  ways.   
One of them involves
the counit $\psi$ of the adjunction $\inc_\MM\dashv M$
and the unit $\theta$ of the adjunction $N\dashv \inc_\NN$,
which enjoy certain universal properties by definition. We show in Section \ref{equivalence} that $N|_\MM\dashv M|_\NN$  is an adjoint equivalence if and only if, when everything is restricted to the subcategories, each of $\psi$ and $\theta$ actually possesses both universal properties.

As kindly indicated to us by Ross Street after reading an earlier draft of this article, our considerations may be enlarged to characterize when a composite adjunction gives an adjoint equivalence. For the benefit of specialists in category theory, we have devoted a separate section
(Section \ref{Ross}) to this more general approach. 
We thank Professor Street warmly for his permission to include this material. 
 
Our first  main example of the reflective-coreflective equivalence involves normal and maximal coactions of a locally compact group on $C^*$-algebras. 
It has already appeared in the literature \cite{clda}, but we provide an alternative development, with several improvements arising from a close scrutiny of the underlying category theory. 
To avoid interrupting the exposition of this equivalence, 
we have relegated the prerequisite background on  coactions and their crossed products to an appendix.

Our second example deals with reduced and universal compact quantum groups. The equivalence of the two associated categories is surely known to experts in quantum group theory, but does not seem to be mentioned in the existing literature.  We also include two other examples involving tensor products of $C^*$-algebras and group representations, in which the subcategories are not only equivalent but in fact isomorphic.

\vspace{-2ex}
\section{Preliminaries}
\label{prelim}

We record here our conventions regarding category theory.
All of this can be found in \cite{maclane}.
We assume familiarity with elementary category theory, e.g., adjoint functors, coreflective and reflective subcategories.
However, since we want this paper to be readable by operator algebraists, among others, we give somewhat more detail in this preliminary section than might seem customary to a category theorist.

\begin{notn}
If $\CC$ and $\DD$ are categories, 
we write:
\begin{enumerate}
\item $\obj\CC$ for the class of objects in $\CC$;

\item $\CC(x,y)$ for the set of morphisms with domain 
$x\in\obj\CC$ and codomain $y\in\obj\CC$, 
and $f : x \to y$ in $\CC$ to mean $f \in \CC(x,y)$;

\item $1_x$ for the identity morphism of the object $x$;

\item (most of the time) $Ff$ rather than $F(f)$ for the value of a functor $F:\CC\to\DD$ at a morphism $f$ (although we usually write compositions of morphisms as $f\circ g$ rather than 
$fg$).
\end{enumerate}
\end{notn}

Recall that a functor $F:\CC\to\DD$ is called
\emph{full} (respectively, \emph{faithful}) if it maps $\CC(x,y)$ surjectively (respectively, injectively) to $\DD(Fx, Fy)$ for all $x, y \in \obj\CC$, and  
\emph{essentially surjective} if every object in $\DD$ is isomorphic to one in the image of $F$.

\medskip 
If $x\in\obj\CC$ and $G:\DD\to\CC$ is a functor, we write $x\dn G$ for the \emph{comma category} 
whose objects are pairs $(y,f)$, where $y\in \obj\DD$ and $f:x\to Gy$ in $\CC$,
and in which $h:(y,f)\to (z,g)$ means that $h:y\to z$ in $\DD$ and $(Gh)\circ f=g$.
Dually, we write $G\dn x$ for the comma category
whose objects are pairs $(y,f)$, where $y\in \obj\DD$ and $f:Gy\to x$ in $\CC$,
and in which $h:(y,f)\to (z,g)$ means that $h:y\to z$ in $\DD$ and $g\circ(Gh)=f$.
If $\inc_\DD:\DD\hookrightarrow\CC$ is an inclusion functor, we write
\begin{align*}
x\dn\DD=x\dn \inc_\DD
\midtext{and}
\DD\dn x=\inc_\DD\dn x.
\end{align*}
In the particular case that $\DD=\CC$, 
the  categories  $x\dn\DD$ and $\DD\dn x$ are sometimes called \emph{slice categories}.

A more general definition goes follows: given functors $F:\CC \to \DD$ and $G:\EE \to \DD$, the \emph{comma category} $F\dn G$ has as objects
all triples $(x,y,f)$, with  $x\in \obj\CC$, $y\in \obj\EE$ and $f:Fx\to Gy$ in $\DD$, and as morphisms $(x,y,f)\to (x',y',f')$ all pairs $(k,h)$ of morphisms $k:x\to x'$ in $\CC$, $h:y \to y'$ in $\EE$ such that $f' \circ (Fk) = (Gh) \circ f$. The composite $(k',h')\circ (k,h)$ is $(k'\circ k, h'\circ h)$, when defined.

\medskip Recall that if $x\in\obj\CC$ and $G:\DD\to\CC$ is a functor,
a \emph{universal morphism from $x$ to $G$} is an initial object in the comma category $x\dn G$, and, dually, a \emph{universal morphism from $G$ to $x$} is a final object in $G\dn x$.

Thus, a universal morphism $(u,\eta)$ from $x$ to $G$ is characterized by the following universal property: whenever $f:x\to Gy$ in $\CC$
there is a unique morphism $g$ in $\DD$ making the diagram
\[
\xymatrix{
x \ar[r]^-\eta \ar[dr]_f
&Gu \ar@{-->}[d]^{Gg}
&u \ar@{-->}[d]^g_{!}
\\
&Gy
&y
}
\]
commute,
and, dually, a universal morphism $(u,\epsilon)$ from $G$ to $x$ is characterized by the universal property that whenever $f:Gy\to x$ in $\CC$ there is a unique morphism $g$ in $\DD$ making the diagram
\[
\xymatrix{
Gy \ar@{-->}[d]_{Gg} \ar[dr]^f
&&y \ar@{-->}[d]^g_{!}
\\
Gu \ar[r]_-\epsilon
&x
&u
}
\]
commute.

Also,
$(u,\eta)$ is a universal morphism from $x$ to $G$ if and only if for every $y\in\obj\DD$ the map $\phi:\DD(u,y)\to\CC(x,Gy)$ defined by
\[
\phi(g)=(Gg)\circ\eta
\]
is bijective,
in which case we have $\eta=\phi(1_u)$.
Dually, $(u,\epsilon)$ is a universal morphism  from $G$ to $x$ if and only if for every $y\in\obj\DD$ the map $\psi:\DD(y,u)\to\CC(Gy,x)$ defined by
\[
\psi(g)=\epsilon\circ Gg
\]
is bijective,
in which case we have $\, \epsilon=\psi(1_u).$
If $G=\inc_\DD:\DD\hookrightarrow\CC$, we refer to universal morphisms from $x$ to $\DD$, or from $\DD$ to $x$. A morphism
$\eta:x\to u$ in $\CC$ is an isomorphism if and only if $(u,\eta)$ is a universal morphism from $x$ to $\CC$. The dual statement is also true. 

\smallskip A functor $F:\CC\to\DD$ is a \emph{left adjoint} of 
a functor $G:\DD\to\CC$,
or $G$ is a \emph{right adjoint} of $F$,
if there are bijections
\[
\phi_{x,y}:\DD(Fx,y)\to\CC(x,Gy)
\midtext{for all $x\in\obj\CC,y\in\obj\DD$}
\]
that are natural in $x$ and $y$.
In this case, we write `$F\dashv G$', and refer to $F\dashv G$ as an \emph{adjunction}
from $\CC$ to $\DD$.  
As is customary, we usually drop the subscripts $x,y$ from the $\phi$, which causes no confusion.

If $F\dashv G$, 
with natural bijections $\phi:\DD(Fx,y)\to\CC(x,Gy)$,
then for every $x\in\obj\CC$ the pair $(Fx,\eta_x)$ is a universal morphism from $x$ to $G$, where $\eta_x=\phi(1_{Fx})$;
and for every $y\in\obj\DD$ the pair $(Gy,\epsilon_y)$ is a universal morphism from $F$ to $y$, where $\epsilon_y=\phi\inv(1_{Gy})$.
Recall that
$\eta:1_\CC\to GF$ is called the \emph{unit} of the adjunction $F\dashv G$,
and $\epsilon:FG\to 1_\DD$ is the \emph{counit}.

Conversely,
given a functor $G:\DD\to\CC$, if for each $x\in\obj\CC$ we have a universal morphism $(Fx,\eta_x)$ from $x$ to $G$,
then the map $F$ on objects extends uniquely to a functor
such that $\eta:1_\CC\to GF$ is a natural transformation,
and moreover $F\dashv G$,
with natural bijections $\phi:\DD(Fx,y)\to\CC(x,Gy)$ defined by
$\phi(g)=Gg\circ\eta_x$.\footnote{Dually, if $F:\CC\to\DD$ is a functor, and if for all $y\in\obj\DD$ we have a universal morphism $(Gy,\epsilon_y)$ from $F$ to $y$, then the map $G$ extends uniquely to a functor
such that $\epsilon:FG\to 1_\DD$ is a natural transformation,
and moreover $F\dashv G$,
with natural bijections $\phi:\DD(Fx,y)\to\CC(x,Gy)$ determined by
$\phi\inv(f)=\epsilon_y\circ Ff$.}

If, given $G\colon\DD\to\CC$, 
we only know that for every $x\in\obj\CC$ there \emph{exists} a universal morphism from~$x$ to~$G$, 
then an Axiom of Choice for classes says that we can choose one such universal morphism $(Fx,\eta_x)$ for every $x$;
thus $G$ is left-adjointable 
if and only if every $x\in\obj\CC$ has a universal morphism to $G$.
Dually, a given functor $F\colon\CC\to\DD$ is right-adjointable 
if and only if every $y\in\obj\DD$ has a universal morphism from~$F$.

It follows that $F\dashv G$ if and only if there exists a natural transformation $\eta:1_\CC\to GF$ such that,
for every $x\in\obj\CC$, the pair $(Fx,\eta_x)$ is a universal morphism from $x$ to $G$. 
There is a similar characterization in terms of $\epsilon$.

For any functor $G:\DD\to\CC$, the left adjoints of $G$ form a natural isomorphism class, and dually for any functor $F:\CC\to\DD$, the right adjoints of $F$ form a natural isomorphism class.

\medskip
Certain properties of adjoints are related to properties of the unit and counit, as illustrated in the following standard lemma.

\begin{lem}
\label{properties}
Let $F\dashv G$, with unit $\eta$ and counit $\epsilon$.
\begin{enumerate}
\item $F$ is faithful if and only if every $\eta_x$ is a monomorphism.

\item $F$ is full if and only if every $\eta_x$ is a split epimorphism.

\item $F$ is full and faithful if and only if $\eta:1_\CC\to GF$ is a natural isomorphism.

\item $G$ is faithful if and only if every $\epsilon_y$ is an epimorphism.

\item $G$ is full if and only if every $\epsilon_y$ is a split monomorphism.

\item $G$ is full and faithful if and only if $\epsilon:FG\to 1_\DD$ is a natural isomorphism.
\end{enumerate}
\end{lem}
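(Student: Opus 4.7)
The plan is to exploit the natural bijection $\phi:\DD(Fx,y)\to\CC(x,Gy)$, $\phi(g)=Gg\circ\eta_x$, and reduce each claim about $F$ to a statement about postcomposition with the components of $\eta$. The crucial observation, immediate from naturality of $\eta$, is that for all $x,x'\in\obj\CC$ and $f:x\to x'$,
\[
\phi(Ff)\;=\;G(Ff)\circ\eta_x\;=\;\eta_{x'}\circ f,
\]
so the composite $\phi\circ F:\CC(x,x')\to\CC(x,GFx')$ (taking $y=Fx'$ in $\phi$) is precisely postcomposition by $\eta_{x'}$. Because $\phi$ is a bijection, $F$ is injective (respectively surjective) on $\CC(x,x')$ if and only if postcomposition by $\eta_{x'}$ is.

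Part (i) is then immediate from the definition of monomorphism. For (ii), $F$ is full iff postcomposition by $\eta_{x'}$ is surjective on $\CC(x,x')$ for every $x,x'$; testing this against $1_{GFx'}\in\CC(GFx',GFx')$ produces $s:GFx'\to x'$ with $\eta_{x'}\circ s=1_{GFx'}$, so $\eta_{x'}$ is a split epimorphism, and conversely any such section yields $g=\eta_{x'}\circ(s\circ g)$ for every $g:x\to GFx'$. Part (iii) combines (i) and (ii): if $\eta_{x'}$ is monic with right inverse $s$, then $\eta_{x'}\circ s\circ\eta_{x'}=\eta_{x'}=\eta_{x'}\circ 1$ together with monicity forces $s\circ\eta_{x'}=1$, so $\eta_{x'}$ is an isomorphism and hence $\eta$ is a natural isomorphism.

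Parts (iv)--(vi) follow by duality: the adjunction $F\dashv G$ corresponds to an adjunction $G^{\textup{op}}\dashv F^{\textup{op}}$ between the opposite categories, whose unit and counit are $\epsilon$ and $\eta$ with arrows reversed. Since ``full'' and ``faithful'' are self-dual while ``monomorphism'' and ``split epimorphism'' dualize to ``epimorphism'' and ``split monomorphism,'' applying (i)--(iii) to this opposite adjunction delivers (iv)--(vi) at once; alternatively, one can rerun the first paragraph's argument using the mirror identity $\phi\inv(Gf)=f\circ\epsilon_x$. The one step that is not purely formal bookkeeping is the surjectivity half of (ii) --- and dually of (v) --- where one must test surjectivity against $1_{GFx'}$ to extract the splitting $s$; everything else is a direct translation through the adjunction bijection.
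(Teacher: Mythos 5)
Your proof is correct. Note that the paper itself gives no proof of this lemma --- it is stated as a ``standard lemma'' with the details left to the literature (Mac~Lane) --- so there is no argument in the paper to compare against; your reduction of fullness and faithfulness of $F$ to surjectivity and injectivity of postcomposition by $\eta_{x'}$ via the identity $\phi(Ff)=\eta_{x'}\circ f$, followed by duality for the counit statements, is exactly the standard argument the authors are implicitly invoking. (The only blemish is the subscript in your mirror identity, which should read $\phi\inv(Gf)=f\circ\epsilon_y$ for $f:y\to y'$; this is purely notational.)
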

Adjunctions can be composed: if $F:\CC\to\DD$, $G:\DD\to\CC$,$H:\DD\to\EE$, and $K:\EE\to\DD$ are functors with $F\dashv G$ and $H\dashv K$, then $H\circ F\dashv G\circ K$.

Recall that if $F:\CC\to\DD$ is an \emph{equivalence}, so that there is a functor $G:\DD\to\CC$ such that%
\footnote{Between functors, the symbol `$\cong$' always denotes natural isomorphism.}
$GF\cong 1_\CC$ and $FG\cong 1_\DD$, then $F$ and $G$ are called \emph{quasi-inverses} of each other; 
$F$ and $G$ are then left and right adjoint of each other, and $\CC$ and $\DD$ are called \emph{equivalent}.

An adjunction $F\dashv G$  from $\CC$ to $\DD$ is called an \emph{adjoint equivalence} if both its unit and counit are natural isomorphisms, \ie, if both $F$ and~$G$ are  full and faithful (using Lemma \ref{properties}); clearly, $F$ and $G$ are then quasi-inverses, and  $\CC$ and $\DD$ are equivalent.   

A functor $F:\CC\to\DD$ is an equivalence if only if it is full, faithful, and essentially surjective, in which case a functor from $\DD$ to $\CC$ is a quasi-inverse of $F$ if and only if it is a right adjoint of $F$, if and only if it is a left adjoint of $F$.

\subsection*{Subcategories}
\label{subcategories}

A subcategory $\NN$ of $\CC$ is \emph{reflective} if the inclusion functor
$\inc_\NN:\NN\to\CC$
is left-adjointable, and any left adjoint $N$
of $\inc_\NN$ is then called
a \emph{reflector} of $\CC$ in $\NN$.

Such a reflector $N:\CC\to\NN$ is completely determined by the choice of a universal morphism $(Nx,\theta_x)$ from  $x$ to  $\NN$ for each object $x$ of $\CC$. 
The universal property says that every morphism in $\CC$ from $x$ to an object in $\NN$ factors uniquely through $\theta_x$:
\[
\xymatrix{
x \ar[r]^{\theta_x} \ar[dr]
&Nx \ar@{-->}[d]_{!}
\\
&z\in\NN.
}
\] 
Hence, if $f:x\to y$ in $\CC$, then $Nf$ is the unique morphism in $\NN$ making the diagram
\[
\xymatrix{
x \ar[r]^-{\theta_x} \ar[d]_f
&Nx \ar@{-->}[d]^{Nf}_{!}
\\
y \ar[r]_-{\theta_y}
&Ny
}
\]
commute.

The associated natural transformation $\theta : 1_\CC \to \inc_\NN \circ N$ is then the unit of the adjunction $N\dashv \inc_\NN$. Its counit $\rho : N \circ \inc_\NN \to 1_\NN$ is the natural transformation  given by letting  $\rho_y:Ny\to y$ be the unique morphism in $\NN$ such that $\rho_y \circ \theta_y = 1_y$ for each $y\in \obj \NN$, and $(y, \rho_y)$ is then a universal morphism from $N$ to $y$. 

Note that if $\NN$ is full, then $\inc_\NN$ is full and faithful, so the counit $\rho$ is a natural isomorphism and  $\theta_y = \rho_y^{-1}$ when $y \in \obj \NN$.
In this case we could in fact  choose $\theta_y=1_y$, \ie, we could arrange that $N|_\NN=1_\NN$; the counit $\rho$ would then be just the identity transformation and the reflector $N:\CC\to\NN$ could be thought of as a sort of ``projection'' of $\CC$ onto  $\NN$.

Dually,
a subcategory $\MM$ of $\CC$ is \emph{coreflective} if the inclusion functor $\inc_\MM:\MM\to\CC$ is right-adjointable, and any right adjoint $M$
of $\inc_\MM$ is called
a \emph{coreflector} of $\CC$ in $\MM$.

Such a coreflector $M$ is completely determined by the choice of a universal morphism $(Mx ,\psi_x)$ from $\MM$ to $x$ for each object $x$ of $\CC$. The universal property says that every morphism from an object of $\MM$ to $x$ factors uniquely through $\psi_x$:
\[
\xymatrix{
y\in\MM \ar[dr] \ar@{-->}[d]^{!}
\\
Mx \ar[r]_{\psi_x}
&x.
}
\]
Hence, if $g:z\to x$ in $\CC$, then $Mg$ is the unique morphism in $\MM$ making the diagram
\[
\xymatrix{
Mz \ar[r]^-{\psi_z} \ar@{-->}[d]_{Mg}^{!}
&z \ar[d]^g
\\
Mx \ar[r]_-{\psi_x}
&x
}
\]
commute.

The associated natural transformation $\psi :   \inc_\MM \circ M \to 1_\CC$ is then the counit of the adjunction $\inc_\MM\dashv M$. Its unit $\sigma :  1_\MM
\to M \circ \inc_\MM$ is the natural transformation  given by letting  $\sigma_y: y \to My$ be the unique morphism in $\MM$ such that $\psi_y \circ \sigma_y = 1_{y}$ 
for each $y\in \obj \MM$, and $(y, \sigma_y)$ is then a universal morphism from $y$ to $M$. 

Similarly to reflective subcategories, note that if $\MM$ is full, then the unit $\sigma$ is a natural isomorphism and  $\psi_y = \sigma_y^{-1}$ when $y \in \obj \MM$.
In this case we could  choose $\psi_y=1_y$, \ie, we could arrange that $M|_\MM=1_\MM$ and  the coreflector $M:\CC\to\MM$ could be thought of as a sort of ``projection'' of $\CC$ onto  $\NN$. However, since this projection property can also be made to happen with reflective subcategories, it is not terrifically informative.

\section{Composite adjoint equivalence}
\label{Ross}

Throughout this section we consider categories $\CC$, $\MM$, and $\NN$,
and functors $I$, $J$, $M$, and $N$ as shown:
\[
\xymatrix@C+30pt{
\MM \ar@<1ex>[r]^{I} \ar@/^2pc/[rr]^{NI}
&\CC \ar@<1ex>[r]^N \ar@<1ex>[l]^M
&\NN \ar@<1ex>[l]^{J} \ar@/^2pc/[ll]^{MJ}
}
\, { }
\]
We further assume that $N\dashv J$ and $I \dashv M$;
this provides us with units  $\theta:1_\CC\to J N$ and $\sigma :  1_\MM\to M I$,
and counits $\rho : N J \to 1_\NN$ and  $\psi:I M\to 1_\CC$. 

The unit $\eta:1_\MM \to MJ NI$ and the counit $\epsilon: NI MJ \to 1_\NN$ for the composite adjunction $NI\dashv MJ$ are given by  
$$\eta = M\theta I \cdot \sigma\, , \, \, \epsilon = \rho \cdot N\psi J \,.$$ In other words, we have:
\begin{align}\label{eq-u} 
\eta_x&= (M\theta_{Ix}) \circ \sigma_x \ \ \text{for each}\ x \in \obj \MM\,;\\ \label{eq-cu}
\epsilon_y&= \rho_y \circ (N\psi_{Jy})  \  \ \text{for each}\ y  \in \obj \NN.
 \end{align} 

We are interested in conditions ensuring that $NI\dashv MJ$ is an adjoint equivalence,
{\it i.e.}, in $\eta$ and $\epsilon$ being natural isomorphisms.

\begin{thm}
\label{main-c}
The following conditions are equivalent:

\begin{itemize}
\item[(i)] 
For each $x\in\obj\MM$, $(x,\theta_{Ix})$ is a final object in $I\dn JNIx$.
\item[(ii)] $NI$ is full and faithful.
\item[(iii)] $\eta$ is a natural isomorphism.
\end{itemize}

If $I$   is full and faithful, then conditions {\rm (i) -- (iii)} are equivalent to
\begin{itemize}
\item[(iv)] $ M\theta_{Ix}$ is an isomorphism in $\MM$ for each $ x \in \obj \MM$.
\end{itemize} 

Similarly, the following conditions are equivalent:

\begin{itemize}
\item[(v)] For each $y\in\obj\NN$, 
$(y,\psi_{Jy})$ is an initial object in $IMJy\dn J$.
\item[(vi)] $MJ$ is full and faithful.
\item[(vii)] $\epsilon$ is a natural isomorphism. 
\end{itemize}

If $J$ is full and faithful, then conditions {\rm (v) -- (vii)} are equivalent to
\begin{itemize}
\item[(viii)]  $N\psi_{Jy}$ is an isomorphism in $\NN$ for each $ y  \in \obj \NN$. 
\end{itemize} 
\end{thm}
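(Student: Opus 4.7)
The plan is to prove the block (i)--(iv) first, with (v)--(viii) following by a dual argument. The equivalence (ii) $\iff$ (iii) is immediate from Lemma \ref{properties}(iii) applied to the composite adjunction $NI \dashv MJ$, whose unit is $\eta$.

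For (i) $\iff$ (ii), I would translate (i) across the adjunction $N \dashv J$. Fix $x \in \obj\MM$. For each $y \in \obj\MM$, the natural bijection
\[
\NN(NIy, NIx) \to \CC(Iy, JNIx), \qquad g \mapsto Jg \circ \theta_{Iy},
\]
carries $NIh$ (for $h \colon y \to x$ in $\MM$) to $\theta_{Ix} \circ Ih$, by the naturality square of $\theta$ at $Ih$. Condition (i) at this $x$ is precisely that $h \mapsto \theta_{Ix} \circ Ih$ is a bijection $\MM(y,x) \to \CC(Iy, JNIx)$ for every $y$, which, via the displayed bijection, is equivalent to bijectivity of $h \mapsto NIh$ from $\MM(y, x)$ to $\NN(NIy, NIx)$ for every $y$. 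Letting $x$ range as well yields exactly (ii). Under the additional hypothesis that $I$ is full and faithful, Lemma \ref{properties}(iii) applied to $I \dashv M$ makes $\sigma$ a natural isomorphism, and the formula $\eta_x = M\theta_{Ix} \circ \sigma_x$ then shows that $\eta_x$ is an isomorphism if and only if $M\theta_{Ix}$ is, giving (iii) $\iff$ (iv).

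The dual block (v)--(viii) is handled by the analogous argument. The equivalence (vi) $\iff$ (vii) follows from Lemma \ref{properties}(vi) applied to $NI \dashv MJ$ with counit $\epsilon$. For (v) $\iff$ (vi), translate through $I \dashv M$: the natural bijection
\[
\CC(IMJy, Jz) \to \MM(MJy, MJz), \qquad f \mapsto Mf \circ \sigma_{MJy},
\]
carries $Jk \circ \psi_{Jy}$ (for $k \colon y \to z$ in $\NN$) to $MJk$, by virtue of the triangle identity $M\psi_{Jy} \circ \sigma_{MJy} = 1_{MJy}$; then argue as in the first block to identify (v) with $MJ$ being full and faithful. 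When $J$ is full and faithful, $\rho$ is a natural isomorphism by Lemma \ref{properties}(vi), and the formula $\epsilon_y = \rho_y \circ N\psi_{Jy}$ gives (vii) $\iff$ (viii).

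I expect the only real obstacle to be bookkeeping: in each translation step one must verify that the adjunction bijection intertwines the two descriptions of the relevant arrow (i.e.\ that $\theta_{Ix} \circ Ih$ corresponds to $NIh$, and dually $Jk \circ \psi_{Jy}$ to $MJk$). Once these identifications are set up cleanly, the rest of the argument reduces to straightforward applications of Lemma \ref{properties}.
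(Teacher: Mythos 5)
Your argument is correct. It shares two of its three legs with the paper's proof: the equivalence (ii)\,$\iff$\,(iii) (resp.\ (vi)\,$\iff$\,(vii)) via Lemma~\ref{properties} applied to the composite adjunction, and the passage to (iv) (resp.\ (viii)) via invertibility of $\sigma$ (resp.\ $\rho$) together with the formulas $\eta_x=(M\theta_{Ix})\circ\sigma_x$ and $\epsilon_y=\rho_y\circ(N\psi_{Jy})$. The remaining leg is where you diverge. The paper links (i) directly to (iii): it applies Lemma~\ref{iso-com} to the adjunction $I\dashv M$ to get an isomorphism of comma categories $I\dn JNIx\simeq\MM\dn MJNIx$ carrying $(x,\theta_{Ix})$ to $(x,\eta_x)$, so finality of the former becomes finality of $(x,\eta_x)$ in a slice category, i.e.\ invertibility of $\eta_x$. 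You instead link (i) directly to (ii) by translating hom-sets across the \emph{other} adjunction $N\dashv J$: since $g\mapsto (Jg)\circ\theta_{Iy}$ is a bijection $\NN(NIy,NIx)\to\CC(Iy,JNIx)$ carrying $NIh$ to $\theta_{Ix}\circ Ih$ (naturality of $\theta$), the universal property defining finality of $(x,\theta_{Ix})$ becomes bijectivity of $h\mapsto NIh$, which is full faithfulness of $NI$ once $x$ ranges as well. The intertwining identities you single out as the only delicate points --- $\phi'(NIh)=\theta_{Ix}\circ Ih$ by naturality, and $\phi\bigl((Jk)\circ\psi_{Jy}\bigr)=MJk$ by a triangle identity --- are exactly what the paper's Lemma~\ref{iso-com} packages abstractly, so the computational content is the same; your version buys a proof that avoids the comma-category isomorphism and works pointwise with hom-sets, at the modest cost of quantifying over both objects to assemble full faithfulness, while the paper's version isolates a reusable lemma that it exploits again in Theorem~\ref{hyp-thm}.
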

An immediate consequence is: 
\begin{cor}\label{c-a-e} The pair $NI\dashv MJ$ is an adjoint equivalence if and only if conditions  {\rm (i)} and  {\rm (v)} in Theorem \ref{main-c} are satisfied. 
\end{cor}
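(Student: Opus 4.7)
The plan is essentially to invoke Theorem~\ref{main-c} twice and combine the two resulting equivalences with the definition of adjoint equivalence. First I would recall that, by definition, the adjunction $NI \dashv MJ$ is an adjoint equivalence precisely when both its unit $\eta$ and its counit $\epsilon$ are natural isomorphisms, where $\eta$ and $\epsilon$ are the natural transformations given by \eqref{eq-u} and \eqref{eq-cu}.

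Next I would apply the first half of Theorem~\ref{main-c} to convert the condition ``$\eta$ is a natural isomorphism'' into condition (i): the theorem gives (i)~$\Leftrightarrow$~(iii), and (iii) is exactly the statement that $\eta$ is a natural isomorphism. Dually, applying the second half of Theorem~\ref{main-c} gives (v)~$\Leftrightarrow$~(vii), and (vii) is the statement that $\epsilon$ is a natural isomorphism.

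Combining these two equivalences yields that $NI \dashv MJ$ is an adjoint equivalence if and only if (i) and (v) both hold, which is the content of the corollary. Since both directions follow immediately from Theorem~\ref{main-c}, there is no genuine obstacle here; the corollary is a packaging result, and the real work has already been absorbed into the proof of the theorem. The only point to be a little careful about is making clear that ``adjoint equivalence'' is being used in the sense introduced earlier in Section~\ref{prelim}, so that the equivalence with ``$\eta$ and $\epsilon$ are both natural isomorphisms'' is by definition rather than something that itself requires proof.
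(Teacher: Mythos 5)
Your proposal is correct and matches the paper exactly: the paper presents this corollary as an immediate consequence of Theorem~\ref{main-c}, obtained by combining the equivalences (i)$\Leftrightarrow$(iii) and (v)$\Leftrightarrow$(vii) with the definition of adjoint equivalence as the unit and counit both being natural isomorphisms. Nothing further is needed.
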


To prove Theorem \ref{main-c} we will use the following lemma.

\begin{lem}\label{iso-com}
Assume that we are given categories and functors $$P: \AA\to \CC, \, F:\CC \to \DD, \, G:\DD\to \CC, \,  Q: \BB \to \DD$$ such that 
$F \dashv G$. Then the comma category $FP\dn Q$ is isomorphic to the comma category $P \dn GQ$. 
\end{lem}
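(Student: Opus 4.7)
The plan is to construct an explicit isomorphism of categories using the natural bijection $\phi:\DD(Fx,y)\to\CC(x,Gy)$ of the adjunction $F\dashv G$. Define $\Phi:FP\dn Q\to P\dn GQ$ on objects by
\[
\Phi(a,b,f)=\bigl(a,b,\phi(f)\bigr)\quad\text{for }a\in\obj\AA,\,b\in\obj\BB,\,f:FPa\to Qb,
\]
and on morphisms by $\Phi(k,h)=(k,h)$. Symmetrically, define $\Psi:P\dn GQ\to FP\dn Q$ using $\phi\inv$, so that $\Psi(a,b,g)=(a,b,\phi\inv(g))$ and $\Psi(k,h)=(k,h)$.

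The only non-trivial point is to verify that $\Phi$ (and $\Psi$) sends morphisms to morphisms, which is exactly where the naturality of $\phi$ enters. Given a morphism $(k,h):(a,b,f)\to(a',b',f')$ in $FP\dn Q$, we have $f'\circ FPk=Qh\circ f$ in $\DD$. Applying $\phi$ to both sides and using naturality of $\phi$ in the first variable (along $Pk:Pa\to Pa'$) on the left, and naturality in the second variable (along $Qh:Qb\to Qb'$) on the right, yields
\[
\phi(f')\circ Pk=\phi(f'\circ FPk)=\phi(Qh\circ f)=GQh\circ\phi(f),
\]
which is precisely the defining relation for $(k,h)$ to be a morphism $\Phi(a,b,f)\to\Phi(a',b',f')$ in $P\dn GQ$. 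The dual computation with $\phi\inv$ shows $\Psi$ is well-defined.

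Functoriality of $\Phi$ and $\Psi$ is automatic since both act as the identity on the morphism pairs $(k,h)$ and composition in a comma category is coordinate-wise. Finally, $\Phi\Psi=1_{P\dn GQ}$ and $\Psi\Phi=1_{FP\dn Q}$ because $\phi$ and $\phi\inv$ are mutually inverse on the $f$-component, and both functors are the identity on morphisms. Hence $\Phi$ is an isomorphism of categories with inverse $\Psi$. The main (and only) obstacle is the naturality verification above; everything else is bookkeeping.
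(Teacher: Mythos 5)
Your proposal is correct and follows exactly the paper's own argument: the paper defines the same functor $R(a,b,f)=(a,b,\phi(f))$, identity on morphism pairs, and declares the verification ``routine.'' You have simply written out the naturality check that the paper omits, so there is nothing to add.
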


\begin{proof}
Let $\phi:\DD(Fx,y)\to \CC(x,Gy)$ for $x\in \obj\CC$ and $y\in\obj\DD$ denote the natural bijections implementing the adjunction $F\dashv G$. We define a map $R$ from $FP\dn Q$ to $P \dn GQ$ as follows:
set $R((a,b, f)) = (a, b, \phi(f)) \in \obj (P \dn GQ)$
for each $(a, b, f) \in \obj (FP\dn Q)$,
and set $R((k,h)) = (k,h)\colon (a, b, \phi(f)) \to (a', b', \phi(f'))$
for each $(k,h): (a,b, f) \to (a', b', f')$ in $FP\dn Q$. 
It is routine to check that $R$ is an isomorphism between the two comma categories.
\end{proof}

\emph{Proof of Theorem \ref{main-c}}. Let $x \in \obj \MM, y \in \obj \CC$. As $\sigma$ is the unit of   $I \dashv M$, the bijection $\phi:\CC(Ix,y)\to\MM(x,My)$ implementing this adjunction is given by $\phi(h)= (Mh)\circ \sigma_x$. Now, using Lemma \ref{iso-com} and its proof,  we have $$I \dn JNIx \, \simeq \, 1_{\MM} \dn MJNIx = \MM \dn MJNIx$$
under an isomorphism which sends $(x, \theta_{Ix})$ to $$(x, \phi(\theta_{Ix}))= (x, (M\theta_{Ix}) \circ \sigma_x) = (x, \eta_x)\,.$$ 
It follows that $(x,\theta_{Ix})$ is  final in $I\dn JNIx$ if and only if $(x, \eta_x)$ is final in the slice category $\MM \dn MJNIx$, that is, if and only if $\eta_x$ is an isomorphism. 

This shows that (i) is equivalent to (iii). Lemma  \ref{properties} gives that (ii) is equivalent to (iii).
If $I$ is full and faithful, then $\sigma$ is a natural isomorphism and the equivalence between (iii) and (iv) follows from equation (\ref{eq-u}). 
 Hence the first half is shown, and the second half follows in a dual way. 
\hfill $\square$ 

\begin{thm} \label{hyp-thm} Consider the following conditions:
\begin{itemize}
\item[(i)] $(Nx, \theta_x \circ \psi_x)$ is an inital object in $IMx \dn J$ for each $x \in \obj \CC$.
\item[(ii)] 
$N\psi_x$ is an isomorphism for  each $x \in \obj \CC$.
\item[(iii)] $(Mx, \theta_x \circ \psi_x)$ is a final object in $I \dn JNx $ for each $x \in \obj \CC$.
\item[(iv)] 
$M\theta_x$ is an isomorphism for  each $x \in \obj \CC$.
\end{itemize}
Then  {\rm (i)} $\Leftrightarrow$ {\rm (ii)} and {\rm (iii)} $\Leftrightarrow$ {\rm (iv)}. 

If these conditions are satisfied and  $I$ and $J$ are both full and faithful, then $NI\dashv MJ$ is an adjoint equivalence. 

On the other hand, if $NI\dashv MJ$ is an adjoint equivalence, then all four conditions above are equivalent; moreover, $I$ and $J$ are then both full and faithful whenever one of these four conditions holds. 
\end{thm}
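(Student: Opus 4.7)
My plan is to mirror the architecture of Theorem~\ref{main-c}: first establish the unconditional equivalences (i)$\iff$(ii) and (iii)$\iff$(iv) via Lemma~\ref{iso-com}, then extract from the adjoint equivalence hypothesis two zig-zag identities connecting $M\theta_x$ and $N\psi_x$ to the unit $\eta$ and counit $\epsilon$ of the composite adjunction.

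For (i)$\iff$(ii), I apply Lemma~\ref{iso-com} with $F=N$, $G=J$, $P$ the constant functor at $IMx$, and $Q=1_\NN$, obtaining a comma-category isomorphism $IMx\dn J \cong NIMx\dn\NN$. Naturality of $\theta$ applied to $\psi_x\colon IMx\to x$ gives $\theta_x\circ\psi_x = JN\psi_x\circ\theta_{IMx}$, so the distinguished object $(Nx,\theta_x\circ\psi_x)$ corresponds under this isomorphism to $(Nx,N\psi_x)$ in the coslice $NIMx\dn\NN$, and the latter is initial precisely when $N\psi_x$ is an isomorphism. The equivalence (iii)$\iff$(iv) is the exact dual: Lemma~\ref{iso-com} applied to $I\dashv M$ gives $I\dn JNx\cong\MM\dn MJNx$, and the triangle identity $M\psi_x\circ\sigma_{Mx}=1_{Mx}$ transports $(Mx,\theta_x\circ\psi_x)$ to $(Mx,M\theta_x)$; the latter is final in $\MM\dn MJNx$ iff $M\theta_x$ is an isomorphism.

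Assume now all four conditions hold and that $I$, $J$ are full and faithful. By Lemma~\ref{properties}(iii), $\sigma$ is a natural isomorphism, so the formula $\eta_x = M\theta_{Ix}\circ\sigma_x$ in (\ref{eq-u}) and condition (iv) at $Ix$ force $\eta_x$ to be an isomorphism for every $x\in\obj\MM$; the dual use of (\ref{eq-cu}), (ii) at $Jy$, and Lemma~\ref{properties}(vi) makes $\epsilon$ a natural isomorphism. Hence $NI\dashv MJ$ is an adjoint equivalence.

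Conversely, suppose $NI\dashv MJ$ is an adjoint equivalence. Applying $M$ to the naturality square $JN\psi_x\circ\theta_{IMx}=\theta_x\circ\psi_x$, right-composing with $\sigma_{Mx}$, and using the triangle identity together with (\ref{eq-u}), I obtain
\[
M\theta_x = MJN\psi_x\circ\eta_{Mx},
\]
while the dual manipulation (naturality of $\psi$ on $\theta_x$, pre-composition with $\rho_{Nx}$, the other triangle identity, and (\ref{eq-cu})) yields
\[
N\psi_x = \epsilon_{Nx}\circ NIM\theta_x.
\]
Since $\eta$ and $\epsilon$ are natural isomorphisms, these identities give (ii)$\iff$(iv); combined with the first step, all four conditions become equivalent. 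Finally, if any of (i)--(iv) holds then specialising (iv) to $Ix$ makes $M\theta_{Ix}$ an isomorphism, so $\eta_x = M\theta_{Ix}\circ\sigma_x$ being an isomorphism forces $\sigma_x$ to be an isomorphism; Lemma~\ref{properties}(iii) then yields $I$ full and faithful, and the dual argument from (ii) at $Jy$ yields $J$ full and faithful. The main obstacle I anticipate is purely bookkeeping: correctly tracking the distinguished morphism $\theta_x\circ\psi_x$ through the comma-category isomorphism in the first step, and cleanly deriving the two zig-zag identities; once these are in place the remaining steps are direct applications of Lemma~\ref{properties} together with (\ref{eq-u}) and (\ref{eq-cu}).
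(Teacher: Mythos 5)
Your proof is correct and follows essentially the same route as the paper's: the same application of Lemma~\ref{iso-com} (with naturality of $\theta$ giving $\theta_x\circ\psi_x=JN\psi_x\circ\theta_{IMx}$, and the triangle identity in the dual case) to transport the distinguished objects into slice categories, the same use of (\ref{eq-u}), (\ref{eq-cu}) and Lemma~\ref{properties} for the forward direction and the final claim, and the same identity $M\theta_x = MJN\psi_x\circ\eta_{Mx}$ for the converse. The only cosmetic difference is that you also derive the dual identity $N\psi_x=\epsilon_{Nx}\circ NIM\theta_x$ to obtain (ii)$\Leftrightarrow$(iv), where the paper instead uses only the first identity together with the fact that the full and faithful functor $MJ$ reflects isomorphisms.
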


\begin{proof} Let $x \in \obj \CC, \, y \in \obj \NN$. As $\theta$ is the unit of   $N \dashv J$, the bijection $\phi':\NN(Nx,y)\to\CC(x,Jy)$ implementing this adjunction is given by $\phi'(h)= (Jh)\circ \theta_x$. Further, using the properties of $\psi$ and $\theta$, it is not difficult to check that  $\theta_x \circ \psi_x = (JN\psi_x)\circ \theta_{IMx}\, $. 

Hence, using Lemma \ref{iso-com} and its proof,  we have $$  NIMx\dn \NN = NIMx \dn 1_{\NN} \,  \simeq \, IMx \dn J  \,$$
under an isomorphism which sends $(Nx, N\psi_x)$ to $$(Nx, \phi'(N\psi_x))= (Nx, (JN\psi_x) \circ \theta_{IMx}) = (Nx,  \theta_x\circ \psi_x)\,.$$ 
It follows that $(Nx, \theta_x  \circ  \psi_x)$ is  inital in $IMx \dn J$ if and only if $(Nx, N\psi_x)$ is initial in the slice category $  NIMx\dn \NN$, that is, if and only if $N\psi_x$ is an isomorphism. 
This shows that~(i) is equivalent to~(ii). The equivalence of~(iii) and~(iv) is dual.

If conditions~(i)--(iv)
are satisfied and  $I$ and $J$ are both full and faithful, then $\sigma$ and $\rho$ are natural isomorphisms, and we see from equations (\ref{eq-u}) and (\ref{eq-cu}) that $\eta$ and $\epsilon$ are natural isomorphisms, hence that $NI\dashv MJ$ is an adjoint equivalence.  

Conversely, assume that $NI\dashv MJ$ is an adjoint equivalence. Then
$MJ$ is full and faithful (Corollary~\ref{c-a-e})
and $\eta_{Mx}$ is an isomorphism (Theorem~\ref{main-c}).
Since
\begin{align*}(MJN\psi_x)\circ \eta_{Mx} &= (MJN\psi_x)\circ M\theta_{IMx} \circ \sigma_{Mx} \\&= 
M\big((JN\psi_x)\circ \theta_{IMx}\big) \circ \sigma_{Mx}
= M(\theta_x\circ \psi_x)  \circ \sigma_{Mx}\\&= M\theta_x\circ M\psi_x  \circ \sigma_{Mx}
= M\theta_x
\end{align*}
for each object $x$ in $\CC$,
we see that $M\theta_x$ is an isomorphism if and only if $N\psi_x$ is an isomorphism. 
It follows that~(ii) is equivalent to~(iv), hence that all four conditions are equivalent. If one of them holds, then (ii) and (iv) hold, and  from equations (\ref{eq-u}) and (\ref{eq-cu}) we now see that  $\sigma$ and $\rho$ must be natural isomorphims; that is, $I$ and $J$ must both be full and faithful.
\end{proof}

\section{Reflective-coreflective equivalence}
\label{equivalence}

We now apply the general theory to a curious sort of equivalence between full subcategories, one reflective and the other coreflective. We have not been able to find this type of equivalence in the category-theory literature.

We let  $\MM$ and $\NN$ be full subcategories of a category $\CC$, with 
$\NN$ reflective and $\MM$ coreflective.
We will use the same notation as in the previous section, now with $I=\inc_\MM$ and $J=\inc_\NN$.
Note that $I$ and~$J$ are both full and faithful since we are assuming that $\MM$ and $\NN$ are full subcategories of $\CC$. 
Thus:

\begin{notn}\label{above-notn}
\item $N:\CC\to\NN$ is a reflector, 

\item $\theta:1_\CC\to \inc_\NN\circ N$ is the unit of the adjunction $N\dashv \inc_\NN$,

\item $\rho : N \circ \inc_\NN \to 1_\NN$ is the counit of the adjunction $N\dashv \inc_\NN$,

\item $M:\CC\to\MM$ is a coreflector,

\item $\psi:\inc_\MM\circ M\to 1_\CC$ is the counit of the adjunction $\inc_\MM\dashv M$, and

\item $\sigma :  1_\MM\to M \circ \inc_\MM$ is the unit of the adjunction $\inc_\MM\dashv M$.
\end{notn}

Writing $N|_\MM=N \circ\inc_\MM$ and $M|_\NN = M \circ \inc_\NN$,
the following diagram illustrates this special situation:
\[
\xymatrix@C+30pt{
\MM \ar@<1ex>[r]^{\inc_\MM} \ar@/^2pc/[rr]^{N|_\MM}
&\CC \ar@<1ex>[r]^N \ar@<1ex>[l]^M
&\NN \ar@<1ex>[l]^{\inc_\NN} \ar@/^2pc/[ll]^{M|_\NN}
}
\]
By composition,  we have $N|_\MM\dashv M|_\NN$, with unit $\eta:1_\MM \to M|_\NN \circ N|_\MM$ and counit $\epsilon: N|_\MM \circ M|_\NN \to 1_\NN$.



In light of Theorem~\ref{main-c},
the following properties are of interest:

\begin{ppies}\label{two-way}\ 

\begin{enumerate}

\item[\, ({F})]
For each $x\in\obj\MM$, 
$(x,\theta_x)$ is a {final} object in $\MM\dn Nx$; \\
in other words, $(x, \theta_x)$ is a universal morphism from
$\MM$ to $Nx$. 

\smallskip 
\item[({I})]
For each $y\in\obj\NN$, 
$(y,\psi_y)$ is an {initial} object in $My\dn\NN$; \\ 
in other words, $(y, \psi_y)$ is a universal morphism from $My$ to
$\NN$. 

\end{enumerate}
\end{ppies}

These conditions may be visualized by the following commutative diagrams:
\begin{align*}
\xymatrix{
\MM&\NN
\\
z \ar[dr] \ar@{-->}[d]_{!}
\\
x \ar[r]^{\theta_x} \ar[dr]
&Nx \ar@{-->}[d]^{!}
\\
&y
}
&&
\xymatrix{
\MM&\NN
\\
x \ar[dr] \ar@{-->}[d]_{!}
\\
My \ar[r]^{\psi_y} \ar[dr]
&y \ar@{-->}[d]^{!}
\\
&z
}
\end{align*}
In the left half, the top part is Property \ref{two-way}~(F),
and the bottom part is guaranteed by reflectivity of~$\NN$ in~$\CC$.
In the right half, the top part is guaranteed by coreflectivity of~$\MM$ in~$\CC$, 
while the bottom part is Property \ref{two-way}~(I).

As a consequence of Theorem \ref{main-c}, we immediately get:
\begin{thm}
\label{main}
The following conditions are equivalent:

\begin{itemize}
\item[(i)] Property \ref{two-way}~{\rm (F)} holds.
\item[(ii)] $N|_\MM$ is full and faithful.
\item[(iii)] $\eta$ is a natural isomorphism.
\item[(iv)] For each $x \in \obj \MM,\,  M\theta_x:Mx \to MNx$  is an isomorphism \(in $\MM$ and therefore\) in $\CC$. 
\end{itemize}

Similarly, the following conditions are equivalent:

\begin{itemize}
\item[(v)] Property \ref{two-way}~{\rm (I)} holds.
\item[(vi)] $M|_\NN$ is full and faithful.
\item[(vii)] $\epsilon$ is a natural isomorphism.
\item[(viii)] For each $x \in \obj \NN,\,  N\psi_x:NMx \to Nx$  is an isomorphism \(in $\NN$ and therefore\) in $\CC$. 
\end{itemize}
\end{thm}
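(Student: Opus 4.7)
The theorem is the specialization of Theorem \ref{main-c} to the setting where $I=\inc_\MM$ and $J=\inc_\NN$ are the inclusion functors of full subcategories. My plan is therefore to apply Theorem \ref{main-c} directly and simply verify that, in this special setting, its conditions (i)--(iv) reduce to Property \ref{two-way}~(F) and to the statements listed as (ii)--(iv) of Theorem \ref{main}; the second half is then dual.

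First I would observe that, because $\MM$ and $\NN$ are \emph{full} subcategories, the inclusion functors $I=\inc_\MM$ and $J=\inc_\NN$ are full and faithful. This is the key hypothesis that lets us invoke the optional fourth condition in each half of Theorem \ref{main-c}. Equivalently, the units $\sigma:1_\MM\to M\circ\inc_\MM$ and $\rho:N\circ\inc_\NN\to 1_\NN$ (of the relevant adjunctions restricted to the subcategories) are natural isomorphisms by Lemma \ref{properties}.

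Next I would match up the comma categories. For $x\in\obj\MM$, we have $Ix=x$ and $JNIx=Nx$ (as an object of $\CC$), so by unwinding the definition of the comma category $I\dn JNIx$ one sees that it coincides with the slice category $\MM\dn Nx$ used in Property \ref{two-way}~(F); moreover $\theta_{Ix}=\theta_x$ under this identification. Thus condition (i) of Theorem \ref{main-c} is literally Property \ref{two-way}~(F), which is condition (i) of Theorem \ref{main}. Condition (ii) of Theorem \ref{main-c} translates verbatim to condition (ii) of Theorem \ref{main} since $NI=N|_\MM$; condition (iii) translates verbatim since $\eta$ is defined identically; and condition (iv) translates verbatim since $I$ is full and faithful (so the Theorem \ref{main-c} hypothesis is met) and $M\theta_{Ix}=M\theta_x$. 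The parenthetical ``in $\MM$ and therefore in $\CC$'' in (iv) is automatic: a morphism of $\MM$ is invertible in $\MM$ iff it is invertible in $\CC$, since $\MM$ is a full subcategory.

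Applying Theorem \ref{main-c} then yields the equivalence of (i)--(iv). The equivalence of (v)--(viii) follows by the dual argument, using the second half of Theorem \ref{main-c} together with fullness and faithfulness of $J=\inc_\NN$. There is no real obstacle: the whole content is in Theorem \ref{main-c} and the only thing to check is that the notation translates correctly. If one wanted a standalone argument, the mild technical point would be checking carefully that $I\dn JNIx$ really is the same category as $\MM\dn Nx$ when $I$ and $J$ are full subcategory inclusions, but this is immediate from the definitions.
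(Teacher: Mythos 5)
Your proposal is correct and is exactly the paper's route: the paper states Theorem~\ref{main} as an immediate consequence of Theorem~\ref{main-c} with $I=\inc_\MM$ and $J=\inc_\NN$, relying on fullness of the subcategories to make $I$ and $J$ full and faithful and to identify $I\dn JNIx$ with $\MM\dn Nx$. The only difference is that you spell out the notational translation that the paper leaves implicit.
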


As a corollary to Theorem \ref{main} we get the following more precise version of~\cite[Proposition~2.1]{clda}:

\begin{cor}\label{main equivalence} 
The pair $N|_\MM\dashv M|_\NN$ is an adjoint equivalence 
if and only if 
Properties \ref{two-way}~{\rm (F)} and~{\rm (I)} hold.
\end{cor}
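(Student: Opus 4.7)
The plan is to deduce this corollary directly from Theorem~\ref{main}, since that theorem has already done the substantive work of translating Properties~\ref{two-way}~(F) and~(I) into statements about the unit $\eta$ and counit $\epsilon$ of the composite adjunction $N|_\MM\dashv M|_\NN$.

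First I would recall the defining criterion: by definition, the adjunction $N|_\MM\dashv M|_\NN$ is an adjoint equivalence precisely when both its unit $\eta\colon 1_\MM\to M|_\NN\circ N|_\MM$ and its counit $\epsilon\colon N|_\MM\circ M|_\NN\to 1_\NN$ are natural isomorphisms. This reduces the proof to checking the two conditions separately.

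Next I would invoke the equivalence (i)~$\Leftrightarrow$~(iii) from the first half of Theorem~\ref{main}, which asserts that Property~\ref{two-way}~(F) holds if and only if $\eta$ is a natural isomorphism. Dually, the equivalence (v)~$\Leftrightarrow$~(vii) from the second half of Theorem~\ref{main} asserts that Property~\ref{two-way}~(I) holds if and only if $\epsilon$ is a natural isomorphism. Combining these two equivalences yields the corollary immediately: $\eta$ and $\epsilon$ are both natural isomorphisms if and only if both Properties~(F) and~(I) hold.

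There is no real obstacle here beyond making sure the notational identifications are correctly in place, namely that the unit and counit of $N|_\MM\dashv M|_\NN$ displayed in \eqref{eq-u}--\eqref{eq-cu} (specialized with $I=\inc_\MM$ and $J=\inc_\NN$) are indeed the $\eta$ and $\epsilon$ referenced in Theorem~\ref{main}. Since both $\inc_\MM$ and $\inc_\NN$ are full and faithful (as $\MM$ and $\NN$ are full subcategories), the natural transformations $\sigma$ and $\rho$ are isomorphisms, and the formulas for $\eta$ and $\epsilon$ reduce cleanly so that Theorem~\ref{main} applies without further adjustment. The proof is therefore just two lines of citation.
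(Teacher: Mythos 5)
Your proposal is correct and matches the paper's (implicit) argument exactly: the corollary is deduced from Theorem~\ref{main} by combining the equivalences (i)$\Leftrightarrow$(iii) and (v)$\Leftrightarrow$(vii) with the definition of adjoint equivalence as both unit and counit being natural isomorphisms. The paper treats this as immediate, and your two-line citation is precisely the intended proof.
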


\section{``Maximal-normal'' type equivalence}
\label{boilerplate}

In this section we keep the hypotheses of Section~\ref{equivalence},
so $\NN$ is a full reflective subcategory of a category $\CC$
and $\MM$ is a full coreflective subcategory of $\CC$;
we also retain Notation~\ref{above-notn}. 
We further assume that the adjunction $N|_\MM \dashv M|_\NN$ is an adjoint equivalence; that is,  we assume that both Properties~\ref{two-way}~(F) and~(I) are satisfied (\cf~Corollary \ref{main equivalence}). Moreover, in order to capture the complete ``maximal-normal equivalence'' phenomenon exhibited by $C^*$-coactions in \cite{clda}, we also assume that the following condition is satisfied:

\begin{hyp}
\label{factor initial}
For each $x\in\obj\CC$,
$(Nx, \theta_x\circ\psi_x)$
is an initial object in the comma category $Mx\dn \NN$.
\end{hyp}
\begin{rem} \label{hyp-rem} Hypothesis \ref{factor initial} may be seen as a strengthening of 
Property~\ref{two-way}~(I). As we are also assuming that  Property~\ref{two-way}~(F) holds, 
it follows from Theorem \ref{hyp-thm} that  we could equally have assumed that $(Mx, \theta_x\circ\psi_x)$ is final in $\MM\dn Nx$ for each $x\in\obj\CC$
\end{rem} 

We immediately apply our new hypothesis:

\begin{prop}
\label{N factor}
$N\cong N|_\MM\circ M$.
\end{prop}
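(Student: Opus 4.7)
The plan is to exhibit an explicit natural isomorphism from $N|_\MM\circ M$ to $N$. Since $\psi:\inc_\MM\circ M\to 1_\CC$ is a natural transformation, applying the reflector $N$ yields a natural transformation
\[
N\psi: N\circ\inc_\MM\circ M \;\longrightarrow\; N,
\]
i.e., a natural transformation from $N|_\MM\circ M$ to $N$ whose component at $x\in\obj\CC$ is $N\psi_x:NMx\to Nx$. It therefore suffices to check that each $N\psi_x$ is an isomorphism in $\NN$, since a componentwise invertible natural transformation is automatically a natural isomorphism.

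This is precisely where Hypothesis~\ref{factor initial} earns its keep. Taking $I=\inc_\MM$ and $J=\inc_\NN$ in \thmref{hyp-thm}, we have $IMx=Mx$, and the comma category $IMx\dn J$ coincides with $Mx\dn\NN$. Consequently, Hypothesis~\ref{factor initial} is literally condition~(i) of \thmref{hyp-thm} holding for every $x\in\obj\CC$. The equivalence (i)\,$\Leftrightarrow$\,(ii) in that theorem then yields that $N\psi_x$ is an isomorphism for \emph{all} $x\in\obj\CC$, not merely for $x\in\obj\NN$ (which is all that Property~\ref{two-way}~(I) alone would guarantee, via \thmref{main}(viii)).

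There is no real obstacle once the right map is identified: the only substantive step is recognizing that Hypothesis~\ref{factor initial} was tailored precisely to upgrade ``$N\psi_x$ is an isomorphism for $x\in\obj\NN$'' to the same statement for all $x\in\obj\CC$. The naturality of $N\psi$ comes for free from the naturality of $\psi$ and functoriality of $N$, and the conclusion $N\cong N|_\MM\circ M$ follows. Dually, by \remref{hyp-rem}, one could instead run the argument through $M\theta$ to obtain $M\cong M|_\NN\circ N$; the proof is entirely symmetric.
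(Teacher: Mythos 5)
Your proof is correct and follows essentially the same route as the paper's: the paper likewise observes that Hypothesis~\ref{factor initial} is condition~(i) of \thmref{hyp-thm}, deduces that each $N\psi_x$ is an isomorphism, and concludes by naturality of $N\psi$. (The paper additionally sketches a direct argument --- both $(NMx,\theta_{Mx})$ and $(Nx,\theta_x\circ\psi_x)$ are initial in $Mx\dn\NN$, so the comparison morphism $N\psi_x$ is an isomorphism --- but this is only an alternative to the step you already have.)
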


\begin{proof} 
It follows from Theorem \ref{hyp-thm} that $N\psi_x$ is an isomorphism for each $x\in\obj\CC$. This may also be seen directly:
in the diagram
\[
\xymatrix{
Mx \ar[r]^{\theta_{Mx}} \ar[d]_{\psi_x}
&NMx \ar@{-->}[d]^{N\psi_x}_{!}
\\
x \ar[r]_{\theta_x}
&Nx,
}
\]
both $(NMx, \theta_{Mx})$ and $(Nx, \theta_x\circ\psi_x)$ are initial in $Mx\dn\NN$,
so the unique morphism $N\psi_x$ in $\NN$
making the diagram commute in $\CC$ is an isomorphism.

Since $N$ is functorial and $\psi$ is a natural transformation, the composition
\[
N\psi:N|_\MM\circ M\to N
\]
is natural, and the result follows.
\end{proof}


\begin{cor}
\label{M factor}
$M\cong M|_\NN\circ N$.
\end{cor}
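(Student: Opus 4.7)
The plan is to run the dual of the argument in Proposition \ref{N factor}. Since Hypothesis \ref{factor initial} together with Properties \ref{two-way}~(F) and~(I) is in force, Remark \ref{hyp-rem} (which draws on Theorem \ref{hyp-thm}) tells us that for every $x \in \obj\CC$ the pair $(Mx, \theta_x \circ \psi_x)$ is a final object in the comma category $\MM \dn Nx$. This is the dual input to the initial-object statement that drove the proof of Proposition \ref{N factor}, and it is the only substantive ingredient we need beyond what appears there.

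Next I would exhibit $M\theta_x$ as the unique morphism between two final objects in $\MM \dn Nx$, so that it must be an isomorphism. Concretely, consider the naturality square
\[
\xymatrix{
Mx \ar[r]^{\psi_x} \ar@{-->}[d]_{M\theta_x}^{!}
& x \ar[d]^{\theta_x}
\\
MNx \ar[r]_{\psi_{Nx}}
& Nx,
}
\]
which commutes by naturality of $\psi$. Since $\MM$ is coreflective, $(MNx, \psi_{Nx})$ is automatically a final object in $\MM \dn Nx$; by the previous paragraph, $(Mx, \theta_x \circ \psi_x)$ is also a final object in $\MM \dn Nx$. The unique morphism in $\MM$ from the latter to the former making the diagram commute in $\CC$ is therefore an isomorphism, and it is precisely $M\theta_x$ by the definition of $M$ on morphisms.

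Finally, since $\theta : 1_\CC \to \inc_\NN \circ N$ is a natural transformation and $M$ is a functor, the assignment $x \mapsto M\theta_x$ assembles into a natural transformation
\[
M\theta : M \to M|_\NN \circ N
\]
whose components we have just shown are all isomorphisms. Hence $M\theta$ is a natural isomorphism, establishing $M \cong M|_\NN \circ N$.

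There is no real obstacle here; the argument is entirely dual to Proposition \ref{N factor}, and the only point requiring care is remembering that the final-object version of the strengthened hypothesis is available via Remark \ref{hyp-rem} rather than being an axiom — it is precisely that invocation of Theorem \ref{hyp-thm} that makes the dual proof go through without any additional assumption.
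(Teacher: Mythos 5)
Your argument is correct, but it is not the proof the paper actually writes out. The paper proves Corollary~\ref{M factor} by a two-line formal computation: since Proposition~\ref{N factor} gives $N\cong N|_\MM\circ M$, and the standing assumption that $N|_\MM\dashv M|_\NN$ is an adjoint equivalence gives $M|_\NN\circ N|_\MM\cong 1_\MM$, one simply composes to obtain $M|_\NN\circ N\cong M|_\NN\circ N|_\MM\circ M\cong M$. The paper does remark, without supplying details, that one ``could argue as in the proof of Proposition~\ref{N factor}, using Theorem~\ref{hyp-thm}'' --- and that is exactly the route you take, and you take it correctly: Theorem~\ref{hyp-thm} (via Remark~\ref{hyp-rem}) converts Hypothesis~\ref{factor initial} into the dual statement that $(Mx,\theta_x\circ\psi_x)$ is final in $\MM\dn Nx$; coreflectivity makes $(MNx,\psi_{Nx})$ final in the same comma category; and $M\theta_x$, being the unique morphism of $\MM\dn Nx$ between two final objects, is an isomorphism, with naturality of $M\theta$ automatic from functoriality of $M$ and naturality of $\theta$. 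What your version buys is an explicit identification of the natural isomorphism as $M\theta$, exactly parallel to the explicit $N\psi$ in Proposition~\ref{N factor}; what the paper's version buys is brevity, since it recycles Proposition~\ref{N factor} and the already-established equivalence rather than repeating the dual final-object argument.
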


\begin{proof}
We could argue as in the proof of Proposition~\ref{N factor}, using Theorem~\ref{hyp-thm};
alternatively, using Proposition~\ref{N factor} directly we have
\begin{align*}
M|_\NN\circ N
&\cong M|_\NN\circ N|_\MM\circ M
\cong 1_\MM\circ M
=M.
\qedhere
\end{align*}
\end{proof}

We can deduce various consequences of the foregoing results; for example, \propref{N factor} and \corref{M factor} immediately give:

\begin{cor}\label{reflector}
$N|_\MM\circ M$ is a reflector of $\CC$ in $\NN$, 
and $M|_\NN\circ N$ is a coreflector of $\CC$ in $\MM$.
\end{cor}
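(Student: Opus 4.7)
The plan is to exploit the fact, recorded in the preliminaries, that the class of left adjoints of a fixed functor is closed under natural isomorphism (and dually for right adjoints). Combined with the factorizations already proved in \propref{N factor} and \corref{M factor}, this reduces the corollary to a one-line invocation.

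First I would recall that, by definition, a reflector of $\CC$ in $\NN$ is any left adjoint of $\inc_\NN$, and similarly a coreflector of $\CC$ in $\MM$ is any right adjoint of $\inc_\MM$. By \propref{N factor} we have a natural isomorphism $N\cong N|_\MM\circ M$. Since $N$ is already known to be a reflector, i.e. a left adjoint of $\inc_\NN$, and since the left adjoints of $\inc_\NN$ form a natural isomorphism class, the composite $N|_\MM\circ M$ is again a left adjoint of $\inc_\NN$, hence a reflector of $\CC$ in $\NN$.

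Dually, \corref{M factor} gives $M\cong M|_\NN\circ N$, and because $M$ is a coreflector (i.e., a right adjoint of $\inc_\MM$) and the right adjoints of $\inc_\MM$ form a natural isomorphism class, $M|_\NN\circ N$ is also a right adjoint of $\inc_\MM$, hence a coreflector of $\CC$ in $\MM$.

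There is really no obstacle here: \propref{N factor} and \corref{M factor} do all of the categorical work (they are the places where \hypref{factor initial} is used), and the present corollary only transports that information through the elementary observation that being a (co)reflector is a property invariant under natural isomorphism of functors. If one wanted to be completely explicit, one could write down the induced natural bijections $\NN(N|_\MM M x, y)\cong \NN(Nx,y)\cong \CC(x, \inc_\NN y)$ and the dual bijections for the coreflector, but this is mere unwinding of the isomorphisms from \propref{N factor} and \corref{M factor}.
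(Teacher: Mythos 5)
Your proof is correct and is essentially the paper's own argument: the paper simply states that \propref{N factor} and \corref{M factor} immediately give the corollary, the implicit justification being exactly the fact you cite from the preliminaries, namely that the left adjoints of $\inc_\NN$ (respectively, the right adjoints of $\inc_\MM$) form a natural isomorphism class. Nothing further is needed.
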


Another consequence is:
\begin{cor}
\label{faithful}

The following conditions are equivalent:

\begin{enumerate}
\item For every $x\in\obj\CC$, $\psi_x:Mx\to x$ is an epimorphism in $\CC$.

\item $M$ is faithful.

\item $N$ is faithful.

\item For every $x\in\obj\CC$, $\theta_x:x\to Nx$ is a monomorphism in $\CC$. 
\end{enumerate}
\end{cor}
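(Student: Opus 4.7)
The plan is to prove (1) $\iff$ (2) and (3) $\iff$ (4) by direct appeals to \lemref{properties}, and then to bridge (2) $\iff$ (3) via \propref{N factor} and \corref{M factor}.

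For the equivalence (1) $\iff$ (2), recall that $\psi$ is the counit of the adjunction $\inc_\MM\dashv M$. Applying \lemref{properties}(iv) to this adjunction, $M$ is faithful if and only if $\psi_x$ is an epimorphism in $\CC$ for every $x\in\obj\CC$. Dually, for (3) $\iff$ (4), $\theta$ is the unit of $N\dashv\inc_\NN$, so \lemref{properties}(i) gives that $N$ is faithful if and only if $\theta_x$ is a monomorphism in $\CC$ for every $x\in\obj\CC$.

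For the remaining equivalence (2) $\iff$ (3), we use the factorizations established in the previous section. By \propref{N factor}, $N\cong N|_\MM\circ M$, and by \corref{M factor}, $M\cong M|_\NN\circ N$. Because we are assuming $N|_\MM\dashv M|_\NN$ is an adjoint equivalence, both $N|_\MM$ and $M|_\NN$ are full and faithful (equivalently, quasi-inverses of each other). Faithfulness is preserved under natural isomorphism, and a composite $G\circ F$ of faithful functors is faithful, while conversely if $G\circ F$ is faithful then $F$ is faithful. Hence $M$ faithful implies $N|_\MM\circ M$, and therefore $N$, is faithful; and $N$ faithful implies $N|_\MM\circ M$ is faithful, which forces $M$ to be faithful. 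The same argument works symmetrically with the second factorization.

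There is no serious obstacle here; the only thing to keep straight is which of $\sigma, \psi, \theta, \rho$ is the unit versus the counit of which adjunction, so that \lemref{properties} is applied to the correct adjunction.
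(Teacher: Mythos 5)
Your proposal is correct and follows essentially the same route as the paper: (i)$\Leftrightarrow$(ii) and (iii)$\Leftrightarrow$(iv) via Lemma~\ref{properties} applied to the adjunctions $\inc_\MM\dashv M$ and $N\dashv\inc_\NN$, and (ii)$\Leftrightarrow$(iii) via the factorization $N\cong N|_\MM\circ M$ with $N|_\MM$ an equivalence. The paper's proof is just a terser version of the same argument, so nothing further is needed.
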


\begin{proof}
Since $\psi$ is the counit of  $ \inc_\MM \dashv M$
and $\theta$ is the unit of $N\dashv \inc_\NN$,
\lemref{properties} gives (i)$\Leftrightarrow$(ii) and (iii)$\Leftrightarrow$(iv).
Since $N\cong N|_\MM\circ M$ and $N|_\MM$ is an equivalence,
we have (ii)$\Leftrightarrow$(iii).
\end{proof}

\begin{rem}
\label{equivalence fail}
Even if we now  also assume  that 
$\psi_x$ is an epimorphism for every $x\in\obj\CC$ (or, equivalently, $\theta_x$ is a monomorphism for every $x\in\obj\CC$), 
$N:\CC\to\NN$ itself can still fail to be an equivalence of categories: 
$N$ is faithful by Corollary \ref{faithful}, and
it is essentially surjective because the counit $\rho$
is  a natural isomorphism. 
But, although 
$\theta_x:x\to Nx$ is a monomorphism for all $x\in\CC$,
it is in general not an isomorphism for all $x$,
in which case $\theta_x$ will not be a split epimorphism,
and hence by \lemref{properties}
$N$ is not full. The point we are making here is that this is the only property of equivalences that $N$ can fail to possess.

Similarly, the coreflector $M:\CC\to\MM$ is then faithful (by \corref{faithful} again)
and essentially surjective (because the unit $\sigma$ is a natural isomorphism), 
but in general will not be full.
\end{rem}

\begin{rem}\label{analoghyp}
Hypothesis \ref{factor initial}, as well as the assumptions in Remark~\ref{equivalence fail},  are satisfied in 
the examples given in Section \ref{examples}.  But we don't know whether it is necessarily true that $\theta_x$ is an epimorphism for all $x\in\obj\CC$ and that $\psi_x$ is a monomorphism for all $x\in\obj\CC$, although these properties are satisfied in our examples.  
\end{rem}

\section{Examples}
\label{examples}

All our examples will involve $C^*$-algebras. We record here a few conventions which are not totally standard. By a homomorphism from a $C^*$-algebra (or just a $*$-algebra) into another, we will always mean a $*$-homomorphism. If  $X$ and $Y$ are  $*$-algebras, ${X\odot Y}$ will represent the
algebraic tensor product; 
if $X$ and $Y$ are $C^*$-algebras,
${X\otimes Y}$ will represent the minimal (\ie,~spatial) $C^*$-tensor
product~\cite[Chapter~6]{GM}.

\subsection{Coactions}
\label{coactions}

Our first --- in fact the ``original'' --- example of the ``maximal-normal'' equivalence involves coactions of groups on $C^*$-algebras.

\medskip Fix a locally compact Hausdorff group $G$.
Coactions of $G$ on $C^*$-algebras are dual to actions;
see \cite{nordfjordeid} for an introduction (including an exposition of the equivalence we will now describe),
or \cite[Appendix~A]{BE}.

 We will give here a development of the equivalence between maximal and normal coactions of $G$.
Most of the main results have appeared in the literature (mainly in \cite{clda}), but we will give an alternative development, with new proofs, and, in some cases, improvements upon existing results. We emphasize that these improvements arose from a close scrutiny of the underlying category theory.

One of our motivations for making this exposition essentially self-contained is that we find the existing literature on group coactions somehow unsatisfying, and in particular we sometimes find it inconvenient to dig specific results out of the currently available papers.

\medskip For the theory of coactions, we adopt the conventions of~\cite{BE}. All our coactions will be full and coaction-nondegenerate.

\begin{notnsub}\

\begin{enumerate}
\item \cs\ will denote the category whose objects are $C^*$-algebras 
and whose morphisms are nondegenerate homomorphisms into multiplier algebras, 
so that $\phi:A\to B$ in \cs\ means that $\phi:A\to M(B)$ is a homomorphism such that $\phi(A)B=B$.
For such a homomorphism, there is always a canonical extension $\bar\phi:M(A)\to M(B)$, 
and we have (for example) $\bar\psi\circ\bar\phi = \overline{\psi\circ\phi}$
when $\psi:B\to C$ in \cs.  

\item \coact\ will denote the category whose objects are coactions of $G$ on $C^*$-algebras, and whose morphisms are morphisms of~\cs\ that are
equivariant for the coactions, so that $\phi:(A,\delta)\to (B,\epsilon)$ in \coact\ means that
the diagram
\[
\xymatrix{
A \ar[r]^-\delta \ar[d]_\phi
&A\otimes C^*(G) \ar[d]^{\phi\otimes\id}
\\
B \ar[r]_-\epsilon
&B\otimes C^*(G)
}
\]
commutes in \cs.
\end{enumerate}
\end{notnsub}

In this example of the maximal-normal equivalence, the coreflective and reflective subcategories of~\cs\ are given by the maximal and normal coactions, respectively. To introduce these, it behooves us to say a few words about \emph{crossed-product duality} for $C^*$-dynamical systems:
for every coaction $(A,\delta)$ there is a \emph{crossed product} $C^*$-algebra, denoted $A\times_\delta G$, that encodes the representation theory of the coaction, and there is a \emph{dual action} $\what\delta$ of $G$ on $A\times_\delta G$ and a \emph{canonical surjection}
\[
\Phi:A\times_\delta G\times_{\what\delta} G\to A\otimes\KK(L^2(G)),
\]
where $\KK$ denotes the compact operators.
$(A,\delta)$ is
\emph{maximal} if $\Phi$
is an isomorphism, and 
\emph{normal} if $\Phi$ factors through an isomorphism of the \emph{reduced crossed product} by the dual action:
\[
\xymatrix{
A\times_\delta G\times_{\what\delta} G
\ar[r]^-\Phi
\ar[d]_\Lambda
&A\otimes \KK
\\
A\times_\delta G\times_{\what\delta,r} G
\ar@{-->}[ur]_\cong,
}
\]
where $\Lambda$ is the \emph{regular representation}.
The full subcategories of~\coact\ obtained by restricting to maximal or normal coactions will be denoted by \mcoact\ and \ncoact, respectively.

In practice, the following normality criterion is often useful: a coaction $(A,\delta)$ is normal 
if and only if $j_A:A\to M(A\times_\delta G)$ is injective,
where $j_A$ is the  ``$A$-part'' of the canonical covariant homomorphism $(j_A,j_G)$ of $(A,C_0(G))$
in the multiplier algebra of the crossed product. 
It is also useful to note that we can take
\[
j_A=(\id\otimes\lambda)\circ\delta,
\]
where $\lambda$ is the left regular representation of $G$.

\begin{notnsub}
For any object $(A,\delta)$ in \coact,
an object 
$((B,\epsilon),\phi)$
in the comma category $\mcoact\dn (A,\delta)$ will be denoted simply as a triple $(B,\epsilon,\phi)$, and similarly for the comma category $(A,\delta)\dn\ncoact$.
\end{notnsub}

Thus, to say $(B,\epsilon,\phi)$ is an object in $\mcoact\dn (A,\delta)$ means that $(B,\epsilon)$ is a maximal coaction and $\phi:B\to M(A)$ is a nondegenerate homomorphism that is $\epsilon-\delta$ equivariant, \ie, $\phi:(B,\epsilon)\to (A,\delta)$ in \coact.

\begin{defnsub}
\label{maximal normal def}
Let $(A,\delta)$ 
be a coaction.
\begin{enumerate}
\item A \emph{normalizer} of $(A,\delta)$ is 
an initial object
$(B,\epsilon,\eta)$ 
in $(A,\delta)\dn \ncoact$,
and we say $(B,\epsilon)$ is a \emph{normalization} of $(A,\delta)$.

\item A \emph{maximalizer} of $(A,\delta)$ is 
a final object $(B,\epsilon,\zeta)$ 
in $\mcoact\dn (A,\delta)$,
and we say $(B,\epsilon)$ is a \emph{maximalization} of $(A,\delta)$.
\end{enumerate}
\end{defnsub}

\begin{remsub}
Note that just knowing that $(B,\epsilon)$ is a normalization of $(A,\delta)$ doesn't uniquely determine a normalizer --- indeed, in general there will be many normalizers for a single normalization\footnote{and \emph{every} normalizer can be obtained from any particular one by pre- (alternatively, post-) composing with an automorphism of the respective coaction}. Our choice of terminology (particularly ``normalizer'') was designed to allow 
us to keep track of this distinction.
Similarly for maximalization.
\end{remsub}

\subsubsection*{Normalizations}

We need to know that normalizations exist:

\begin{propsub}
[{\cite[Proposition~2.6]{fullred}}]
\label{normal exist}
If $(A,\delta)$ is a coaction, 
then 
$j_A:(A,\delta)\to (j_A(A),\ad j_G)$ is a normalizer.
\end{propsub}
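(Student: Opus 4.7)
The plan is to verify that $(j_A(A),\ad j_G,j_A)$ is an initial object in $(A,\delta)\dn\ncoact$. This decomposes into three tasks: (a) exhibiting $(j_A(A),\ad j_G)$ as a normal coaction; (b) showing $j_A$ is an equivariant nondegenerate homomorphism from $(A,\delta)$; and (c) establishing that any equivariant nondegenerate homomorphism from $(A,\delta)$ to a normal coaction factors uniquely through $j_A$.

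For (a) and (b), I would first make sense of $\epsilon := \ad j_G$ as a coaction on $j_A(A)$. Taking the canonical unitary $w_G \in M(C_0(G)\otimes C^*(G))$ and setting $U = (j_G\otimes\id)(w_G)$ in $M(A\times_\delta G \otimes C^*(G))$, one checks that $\ad U$ restricts to a nondegenerate homomorphism $\epsilon\colon j_A(A)\to M(j_A(A)\otimes C^*(G))$ obeying the coaction identity, and that by direct computation $(j_A\otimes\id)\circ\delta = \epsilon\circ j_A$---which is precisely equivariance of $j_A$. Normality of $\epsilon$ is then verified via the stated criterion: one shows that $j_{j_A(A)}$ is injective, exploiting the fact that the pair $(j_A(A)\hookrightarrow M(A\times_\delta G),\, j_G)$ forms a faithful covariant representation of $(j_A(A),\epsilon)$ inside $M(A\times_\delta G)$.

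The key step is (c). Given equivariant $\phi\colon (A,\delta)\to (C,\gamma)$ with $(C,\gamma)$ normal, functoriality of the crossed product produces a nondegenerate homomorphism $\phi\times G\colon A\times_\delta G \to C\times_\gamma G$ whose multiplier extension satisfies the intertwining relation $\overline{\phi\times G}\circ j_A = j_C\circ\phi$. Since $\gamma$ is normal, $j_C$ is injective, which forces $\ker j_A\subseteq \ker\phi$; hence $\phi$ factors uniquely as $\psi\circ j_A$ for a nondegenerate homomorphism $\psi\colon j_A(A)\to M(C)$. Uniqueness in \cs\ follows from the surjectivity of $A\to j_A(A)$, and the equivariance of $\psi$ against $\epsilon$ and $\gamma$ drops out by chasing the equivariance of $\phi$ (and of $\phi\times G$, which intertwines the dual actions) through the factorization.

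The main technical obstacle lies in step (a): verifying that $\ad j_G$ genuinely defines a full, coaction-nondegenerate coaction on $j_A(A)$ requires the cocycle identity for $w_G$, careful slice-map arguments, and manipulations in multiplier algebras. Once the coaction $\epsilon$ is in hand and shown to be normal, the remainder is essentially a diagram chase, with the universal property in (c) driven entirely by the injectivity of $j_C$ that normality of the target provides.
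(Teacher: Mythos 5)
Your proposal is correct and follows essentially the same route as the paper: the heart of the argument in both is that functoriality of the crossed product gives $(\phi\times G)\circ j_A=j_C\circ\phi$, so injectivity of $j_C$ (normality of the target) yields $\ker j_A\subseteq\ker\phi$, and surjectivity of $j_A$ onto $j_A(A)$ then gives the unique equivariant factorization. The only difference is that the paper outsources your steps (a) and (b) to its Corollary~\ref{unitary} rather than verifying the coaction $\ad j_G$ and normality from scratch.
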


In the above proposition, we've committed a mild abuse of notation: by our earlier use of the notation ``$\ad$'', $\ad j_G$ would refer to an inner coaction, for example on $A\times_\delta G$; here of course we are using the same notation for the restriction of $\bar{\ad j_G}$ to $j_A(A)$. 
Moreover, we should formally have said ``$(j_A(A),\ad j_G,j_A)$ is a normalizer''. 
We will from now on sometimes be sloppy and refer to an object $(y,f)$ in a comma category just by the morphism $f$.

\begin{proof}
\corref{unitary} tells us that 
$j_A$ is a morphism of $(A,\delta)$ to the normal coaction $(j_A(A),\ad j_G)$, and by construction $j_A$ is surjective.
Let $\phi:(A,\delta)\to (B,\epsilon)$ in \coact\ with
$(B,\epsilon)$ normal. We need to know that there is a unique morphism $\rho$ in \coact\ making the diagram
\[
\xymatrix{
(A,\delta) \ar[r]^-{j_A} \ar[dr]_\phi
&(j_A(A),\ad j_G) \ar@{-->}[d]^\rho_{!}
\\
&(B,\epsilon)
}
\]
commute. It suffices to show that $\ker j_A\subset \ker\phi$.
By functoriality of crossed products, we have a commutative diagram
\[
\xymatrix{
A \ar[r]^-{j_A} \ar[d]_\phi
&A\times_\delta G \ar[d]^{\phi\times G}
\\
B \ar[r]_-{j_B}
&B\times_\epsilon G,
}
\]
so that
\begin{align*}
\ker j_A
&\subset \ker\bigl((\phi\times G)\circ j_A\bigr)
=\ker\bigl(j_B\circ\phi)
=\ker \phi
\end{align*}
because $j_B$ is injective.
\end{proof}

Upon examining the above particular normalizer, we discern a hidden property:

\begin{corsub}
\label{normalizer surjective}
Every normalizer is surjective.
\end{corsub}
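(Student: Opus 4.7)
The approach is to exploit the uniqueness of initial objects up to (unique) isomorphism in the comma category $(A,\delta)\dn\ncoact$. Proposition~\ref{normal exist} has just exhibited one specific normalizer, namely $(j_A(A),\ad j_G,j_A)$, which is surjective essentially by construction, since $j_A(A)$ is \emph{defined} as the image of $j_A$. Any other normalizer must be isomorphic to this one in $(A,\delta)\dn\ncoact$, so surjectivity should transfer.

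In detail, suppose $(B,\epsilon,\eta)$ is an arbitrary normalizer of $(A,\delta)$. By definition both $(B,\epsilon,\eta)$ and $(j_A(A),\ad j_G,j_A)$ are initial objects in the comma category $(A,\delta)\dn\ncoact$. The universal property applied in both directions yields a unique pair of morphisms $\alpha:(j_A(A),\ad j_G)\to (B,\epsilon)$ and $\beta:(B,\epsilon)\to (j_A(A),\ad j_G)$ in $\ncoact$ satisfying $\alpha\circ j_A=\eta$ and $\beta\circ\eta=j_A$; uniqueness of the factorization through an initial object then forces $\beta\circ\alpha$ and $\alpha\circ\beta$ to equal the respective identities. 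Thus $\alpha$ is an isomorphism in $\ncoact$, and in particular an isomorphism in~\cs.

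The last step is to translate this categorical isomorphism into honest surjectivity of the underlying map $\eta:A\to B$. An isomorphism in~\cs\ between $C^*$-algebras is nothing other than a $*$-isomorphism, so $\alpha$ carries $j_A(A)$ bijectively onto $B$. Since $j_A:A\to j_A(A)$ is surjective by the very definition of $j_A(A)$, the equation $\eta=\alpha\circ j_A$ gives $\eta(A)=\alpha(j_A(A))=B$, as required.

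I do not foresee a genuine obstacle here; the only mild point to verify carefully is that an isomorphism in~\cs\ really is a $*$-isomorphism of $C^*$-algebras rather than merely a nondegenerate homomorphism into a multiplier algebra, so that ``isomorphism in~$\ncoact$'' actually delivers the set-theoretic bijectivity needed to turn surjectivity of $j_A$ into surjectivity of $\eta$. This is immediate from the convention that morphisms in~\cs\ land in $M(B)$ and that an inverse in the categorical sense must provide two-sided inverses for the underlying maps.
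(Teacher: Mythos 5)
Your argument is correct and is exactly the paper's proof: the particular normalizer $j_A$ from Proposition~\ref{normal exist} is surjective by construction, and any other normalizer is isomorphic to it as an initial object of $(A,\delta)\dn\ncoact$, whence surjectivity transfers. The extra care you take in checking that an isomorphism in~\cs\ is a genuine $*$-isomorphism onto the algebra itself is a reasonable elaboration of the same one-line argument the paper gives.
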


\begin{proof}
This follows immediately from the following two observations: it is true for the particular normalizer in \propref{normal exist}, and all normalizers are isomorphic by universality of initial objects.
\end{proof}

In the following characterization of normalizations, the proof of the converse direction is essentially due to Fischer \cite[Lemma~4.2]{fischer:quantum} (see also \cite[Lemma~2.1]{maximal} --- the hypothesis in \cite{maximal} that the homomorphisms map into the $C^*$-algebras themselves rather into the multipliers is not used in the proof of~\cite[Lemma~2.1]{maximal}). We say ``essentially'' regarding \cite{fischer:quantum} because Fischer doesn't explicitly address equivariance.

\begin{propsub}
\label{normal universal}
An object $(B,\epsilon,\eta)$ of $(A,\delta)\dn \ncoact$ is a normalizer if and only if the morphism
\[
\eta\times G:A\times_\delta G\to B\times_\epsilon G
\]
in \cs\ is an isomorphism.
\end{propsub}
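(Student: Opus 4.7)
The plan is to prove both implications, invoking universality for the forward direction and a Landstad-type descent for the converse.

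For the forward direction, I would first note that by \propref{normal exist}, $(j_A(A),\ad j_G,j_A)$ is also a normalizer, so if $(B,\epsilon,\eta)$ is a normalizer, universality of initial objects supplies an isomorphism $\mu:(j_A(A),\ad j_G)\to(B,\epsilon)$ in \coact\ with $\mu\circ j_A=\eta$. Functoriality of crossed products then gives the factorization $\eta\times G=(\mu\times G)\circ(j_A\times G)$ with $\mu\times G$ an isomorphism, reducing the task to showing that $j_A\times G$ is an isomorphism. Its image contains the canonical generators of $j_A(A)\times_{\ad j_G}G$ and is therefore surjective; injectivity is a standard regularity statement for the full crossed product.

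For the converse, I would fix a morphism $\phi:(A,\delta)\to(C,\gamma)$ in \coact\ with $(C,\gamma)$ normal and set
\[
\sigma:=(\phi\times G)\circ(\eta\times G)^{-1}:B\times_\epsilon G\to C\times_\gamma G,
\]
then produce the desired $\rho:(B,\epsilon)\to(C,\gamma)$ by descending $\sigma$ to the algebra level. Both crossed-product maps preserve $j_G$ and intertwine the dual actions, so $\sigma$ inherits these properties. The crucial step is to show that $\bar\sigma(j_B(B))\subseteq j_C(C)$: by Landstad's characterization, $j_C(C)$ consists precisely of the $\what\gamma$-invariant multipliers of $C\times_\gamma G$ whose products with $j_G(C_0(G))$ lie in $C\times_\gamma G$, and $\sigma$ respects both conditions. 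Since $(C,\gamma)$ is normal, $j_C$ is injective, so there is a well-defined $\rho:B\to M(C)$ determined by $j_C\circ\rho=\bar\sigma\circ j_B$, and $\epsilon$-$\gamma$ equivariance of $\rho$ is routine. Combining $(\eta\times G)\circ j_A=j_B\circ\eta$ with $(\phi\times G)\circ j_A=j_C\circ\phi$ yields $\rho\circ\eta=\phi$, and uniqueness follows because any competing $\rho'$ with $\rho'\circ\eta=\phi$ would satisfy $\rho'\times G=\sigma$ and hence $j_C\circ\rho'=j_C\circ\rho$.

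The main obstacle will be the descent of $\sigma$ to $\rho$ in the converse direction, which relies on a Landstad-type characterization of $j_C(C)$ inside $M(C\times_\gamma G)$; this is essentially the content of Fischer's argument, and it is what forces the hypothesis that the target coaction be normal.
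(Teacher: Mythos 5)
Your argument is correct, but both halves diverge from the paper's proof. For the forward direction you compare $(B,\epsilon,\eta)$ with the canonical normalizer $(j_A(A),\ad j_G,j_A)$ and use that isomorphic initial objects give $\eta\times G=(\mu\times G)\circ(j_A\times G)$; this is exactly the alternative route the paper records in the Remarks following the proposition, and the fact you gesture at as ``a standard regularity statement'' is \corref{j_A crossed} (\ie, \cite[Proposition~2.5]{fullred}), which deserves an explicit citation since injectivity of $j_A\times G$ is a genuine theorem, not a formality. The paper's own proof of this direction instead shows directly that $(B\times_\epsilon G,j_B\circ\eta,j_G)$ is a crossed product of $(A,\delta)$, using surjectivity of normalizers, \lemref{generated}, \lemref{covariant} and \corref{recognize crossed product}. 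For the converse, your Landstad-type descent of $\sigma=(\phi\times G)\circ(\eta\times G)^{-1}$ works, but it imports Quigg's characterization of $j_C(C)$ as the generalized fixed-point algebra of the dual action, a tool the paper never needs here; the paper instead computes
\[
j_C\circ\phi=(\phi\times G)\circ(\eta\times G)^{-1}\circ j_B\circ\eta,
\]
reads off $\ker\eta\subseteq\ker\phi$ from injectivity of $j_C$ (normality of $(C,\gamma)$), and invokes the factorization criterion for surjective morphisms in \coact\ to obtain existence and uniqueness of $\rho$ in one stroke. The trade-off: your version avoids appealing to surjectivity of $\eta$ and to the \cs-factorization lemma, at the cost of the heavier Landstad machinery (and of the routine but nontrivial verifications that $\sigma$ preserves $j_G$, intertwines the dual actions, and yields a nondegenerate, equivariant $\rho$); the paper's version is shorter and stays entirely within the lemmas established in its appendix.
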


\begin{proof}
First assume that $(B,\epsilon,\eta)$ is a normalizer.
By \lemref{recognize crossed product}, to see that $\eta\times G:A\times_\delta G\to B\times_\epsilon G$ is an isomorphism it suffices to show that $(B\times_\epsilon G,j_B\circ\eta,j_G)$ is a crossed product of $(A,\delta)$.
Since $\eta$ is surjective by \corref{normalizer surjective}, $B\times_\epsilon G$ is generated by $j_B\circ\eta(A)j_G(C_0(G))$. Thus
by \lemref{generated} it suffices to show that every covariant homomorphism $(\pi,\mu)$ of $(A,\delta)$ factors through $(j_B\circ\eta,j_G)$.
By universality there is a unique morphism $\rho$ in \coact\ making the diagram
\[
\xymatrix{
(A,\delta) \ar[r]^-\eta \ar[dr]_\pi
&(B,\epsilon) \ar@{-->}[d]^\rho_{!}
\\
&(C,\ad\mu)
}
\]
commute. Then by \lemref{covariant}, $(\rho,\mu)$ is a covariant homomorphism of $(B,\epsilon)$ in $M(C)$, and
the morphism $\rho\times\mu:B\times_\epsilon G\to C$ in \cs\ satisfies
\begin{align*}
(\rho\times\mu)\circ j_B\circ\eta
&=\rho\circ\eta
=\pi,
\end{align*}
and of course
\[
(\rho\times\mu)\circ j_G=\mu.
\]

Conversely, suppose $\eta\times G$ is an isomorphism,
and let $(C,\gamma,\phi)$ be an object in $(A,\delta)\dn \ncoact$. We need to show that there is a unique morphism $\psi$ in \coact\ making the diagram
\[
\xymatrix{
(A,\delta) \ar[r]^\eta \ar[dr]_\phi
&(B,\epsilon) \ar@{-->}[d]^\psi_{!}
\\
&(C,\gamma)
}
\]
commute.
It suffices to observe that
 $\ker\phi\supset\ker\eta$, since
\begin{align*}
j_C\circ\phi
&=(\phi\times G)\circ j_A
\\&=(\phi\times G)\circ(\eta\times G)\inv\circ(\eta\times G)\circ j_A
\\&=(\phi\times G)\circ(\eta\times G)\inv\circ j_B\circ\eta
\end{align*}
and $j_C$ is injective.
\end{proof}

\medskip \begin{remssub}
(i)~For the first half of the above proof, we could have alternatively argued\footnote{and in fact there is some redundancy in the results presented here} as in \propref{maximal universal} below: note that by \corref{j_A crossed} and \lemref{normal exist} there is at least one normalizer $(C,\gamma,\sigma)$ for which $\sigma\times G$ is an isomorphism, and since any two normalizers are isomorphic it follows that $\eta\times G$ is also an isomorphism.

\smallskip (ii)~Since \corref{j_A crossed} shows that $j_A\times G$ is an isomorphism, the above proposition implies that $(j_A(A),\ad j_G,j_A)$ is a normalizer, giving an independent proof of~\lemref{normal exist}.
\end{remssub}

\begin{notnsub}
\label{choose normalizer}
For every coaction $(A,\delta)$ we make the following choice of 
normalizer
$q^n:(A,\delta)\to (A^n,\delta^n)$:
\begin{itemize}
\item $A^n=A/\ker j_A$;

\item $\delta^n$ is the unique coaction of $G$ on $A^n$
corresponding to the coaction $\ad j_G$
under the canonical isomorphism $A^n\cong j_A(A)$;

\item $q^n=q^n_{(A,\delta)}:A\to A^n$ is the quotient map.
\end{itemize}
\end{notnsub}

Thus it follows from \propref{normal exist} 
that there is a unique functor $\nor:\coact\to\ncoact$ that
takes each object $(A,\delta)$ to $(A^n,\delta^n)$
and is a left adjoint to the inclusion functor,
so that
\ncoact\ is a reflective subcategory of~\coact\
and $\nor$ is a reflector,
with unit $q^n$.
Moreover, by our construction we can identify the normalization of every normal coaction with itself, 
so that the counit of this reflector is the identity transformation on the identity functor on the subcategory \ncoact.
What the normalization functor does to morphisms is characterized as follows: if $\phi:(A,\delta)\to (B,\epsilon)$ in \coact, then the normalization 
of $\phi$ is the unique morphism 
$\phi^n$ 
in \ncoact\footnote{indeed, unique in \coact, since the subcategory \ncoact\ is full} making the diagram
\[
\xymatrix{
(A,\delta) \ar[r]^{q^n} \ar[d]_\phi
&(A^n,\delta^n) \ar[d]^{\phi^n}
\\
(B,\epsilon) \ar[r]_{q^n}
&(B^n,\epsilon^n)
}
\]
commute.

\subsubsection*{Maximalizations}

The existence of maximalizations is established in \cite[Theorem~3.3]{maximal} and \cite[Theorem~6.4]{fischer:quantum}.
The construction in \cite{maximal} is noncanonical (involving a choice of minimal projection in the compacts), while Fischer's construction in \cite{fischer:quantum} is canonical (involving an appropriate relative commutant of the image of $\KK$ in the multipliers of the double crossed product).
However, having a specific formula for maximalizations
has not turned out to be particular useful,
and in fact
from a categorical perspective is clearly deprecated.
In certain situations where the cognoscenti ``know'' what the maximalization should be, we'll be careful to say ``a maximalization'' (or ``a maximalizer''). For instance, if $(A,G,\alpha)$ is an action, then the regular representation
\[
\Lambda:(A\times_\alpha G,\what\alpha)\to (A\times_{\alpha,r} G,\what\alpha^n)
\]
is \emph{the} normalization of the dual coaction on the full crossed product, but is only \emph{a} maximalization of the dual coaction on the reduced crossed product. The point is that, given only the coaction $(A\times_{\alpha,r} G,\what\alpha^n)$, we can't reconstruct what the action $(A,\alpha)$ was, and so we can't reconstruct the full crossed product. Again, Fischer tells us how to pick a canonical maximalization, but we will not do that.

As with normalizers, we have an automatic surjectivity for maximalizers:

\begin{lemsub}
\label{maximalizer surjective}
Every maximalizer is surjective.
\end{lemsub}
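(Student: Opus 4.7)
The plan is to follow the same two-step template used in the proof of \corref{normalizer surjective}. Namely, I first produce one specific maximalizer whose structure map is visibly surjective, and then leverage the uniqueness-up-to-isomorphism of final objects to transfer this property to all maximalizers.

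For the first step, I would appeal to one of the two existence results cited just before the lemma (Theorem 3.3 of \cite{maximal} or Theorem 6.4 of \cite{fischer:quantum}). In both constructions the canonical morphism $\zeta: B \to A$ from the maximalization down to $(A,\delta)$ is produced explicitly as a quotient/canonical surjection (Fischer's canonical choice uses a relative commutant inside a double crossed product with a natural evaluation onto $A$; the construction in \cite{maximal} produces $A$ as a quotient of the maximalizer). Thus there is at least one maximalizer $(B,\epsilon,\zeta)$ with $\zeta(B)=A$.

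For the second step, suppose $(B',\epsilon',\zeta')$ is any other maximalizer. Since both $(B,\epsilon,\zeta)$ and $(B',\epsilon',\zeta')$ are final objects in $\mcoact\dn(A,\delta)$, universality supplies a unique isomorphism $\phi:(B',\epsilon')\to (B,\epsilon)$ in \mcoact\ with $\zeta\circ\phi=\zeta'$. Because $\phi$ is an isomorphism of $C^*$-algebras, $\phi(B')=B$, so
\[
\zeta'(B')=\zeta(\phi(B'))=\zeta(B)=A,
\]
and hence $\zeta'$ is surjective as well.

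The only potential obstacle is verifying that the existence constructions cited actually yield a surjective structure map; but this is apparent on inspection of either proof and requires no new ideas. No deeper step is needed, and the argument is completely parallel to the one already given for normalizers in \corref{normalizer surjective}.
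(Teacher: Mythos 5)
Your proposal is correct and matches the paper's own argument exactly: the paper also observes that the maximalizers constructed in \cite{maximal} and \cite{fischer:quantum} are surjective and then transfers surjectivity to all maximalizers via the isomorphism between final objects. Your version merely spells out the composition $\zeta'=\zeta\circ\phi$ explicitly, which the paper leaves implicit.
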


\begin{proof}
The argument is similar to \corref{normalizer surjective}: the maximalizers constructed in both \cite{maximal} and \cite{fischer:quantum} are surjective, and by universality all maximalizers are isomorphic.
\end{proof}

\begin{propsub}
\label{maximal universal}
An object $(B,\epsilon,\zeta)$ of $\mcoact\dn (A,\delta)$ is a maximalizer if and only if the morphism
\[
\zeta\times G:B\times_\epsilon G\to A\times_\delta G
\]
in \cs\ is an isomorphism.
\end{propsub}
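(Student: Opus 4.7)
The plan is to mirror the proof of \propref{normal universal}, dualizing each step. For the forward direction I intend to follow the alternative strategy sketched in Remark~(i) after that proof --- namely, to leverage uniqueness of maximalizers together with the existence of a distinguished one whose crossed-product map is already known to be an isomorphism.

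Concretely, for the forward direction I would first recall that the constructions of maximalizations in \cite{maximal} and \cite{fischer:quantum} produce a particular maximalizer $(B_0, \epsilon_0, \zeta_0)$ of $(A,\delta)$ in which $\zeta_0 \times G$ is (by construction) an isomorphism. Given any other maximalizer $(B, \epsilon, \zeta)$, the universal property of final objects in $\mcoact \dn (A,\delta)$ supplies an isomorphism $\alpha:(B,\epsilon)\to(B_0,\epsilon_0)$ in \coact\ with $\zeta_0\circ\alpha = \zeta$, and functoriality of the crossed product then exhibits $\zeta\times G = (\zeta_0\times G)\circ(\alpha\times G)$ as a composition of isomorphisms.

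For the reverse direction I would assume $\zeta \times G$ is an isomorphism and, starting from an arbitrary $(C,\gamma,\phi)\in \mcoact\dn(A,\delta)$, produce a unique $\psi:(C,\gamma)\to(B,\epsilon)$ in \coact\ with $\zeta\circ\psi=\phi$. My natural candidate at the crossed-product level is
\[
\mu \;:=\; (\zeta\times G)^{-1}\circ(\phi\times G)\colon C\times_\gamma G \to B\times_\epsilon G,
\]
a nondegenerate homomorphism that is $\what\gamma$-$\what\epsilon$ equivariant. The crucial step is to lift $\mu$ to $\psi$ in \coact\ with $\psi\times G = \mu$, exploiting the maximality of both $(C,\gamma)$ and $(B,\epsilon)$. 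Granted such a lift, the identity $\zeta\circ\psi = \phi$ will follow by applying $\times G$ to both sides and invoking the faithfulness of the crossed-product functor on morphisms out of a maximal coaction; uniqueness of $\psi$ is handled in the same way.

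The main obstacle in both halves will be this crossed-product duality: that the functor $\times G$, when restricted to \mcoact, is full and faithful (into the category of $G$-actions with $\what{}$-equivariant morphisms). This is a Landstad-type statement for maximal coactions. I would expect to harvest it from the background on coactions in the appendix, where the canonical isomorphism $\Phi_C:C\times_\gamma G\times_{\what\gamma} G\to C\otimes \KK$ --- which \emph{is} maximality --- and its naturality supply exactly what is needed. With that tool available, the remainder of the argument is a formal dualization of \propref{normal universal}.
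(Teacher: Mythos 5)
Your forward direction is exactly the paper's argument: take the particular maximalizer from \cite{maximal} or \cite{fischer:quantum} whose crossed-product map is an isomorphism by construction, use finality to get an isomorphism of comma-category objects onto it, and apply functoriality of $\times\, G$. No issues there.

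The reverse direction, however, has a genuine gap. You reduce everything to the claim that the crossed-product functor restricted to \mcoact\ is full (and faithful) into the category of actions, so that the $\what\gamma-\what\epsilon$ equivariant morphism $\mu=(\zeta\times G)\inv\circ(\phi\times G)$ lifts to some $\psi$ with $\psi\times G=\mu$. But that fullness statement is not in the appendix and is not something you can simply ``harvest'': it is essentially a Katayama/Landstad duality theorem for maximal coactions, and proving it is tantamount to proving the very implication you are after (indeed, this proposition is one of the ingredients such duality results are built from, so there is a real risk of circularity). The paper instead does the lifting by hand at the level of the \emph{double} crossed product: it sets
\[
\sigma=\Phi_B\circ(\zeta\times G\times G)\inv\circ(\phi\times G\times G)\circ(\Phi_C)\inv\colon C\otimes\KK\to B\otimes\KK,
\]
using maximality of $(B,\epsilon)$ and $(C,\gamma)$ to know that $\Phi_B$ and $\Phi_C$ are isomorphisms, then invokes the specific duality fact that $\bar\sigma$ restricts to $1_{M(B)}\otimes\id_\KK$ on $1_{M(C)}\otimes\KK$. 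This forces $\bar\sigma$ to carry $C\otimes 1$ into $M(B)\otimes 1_\KK$, so $\sigma=\psi\otimes 1$ for a unique nondegenerate $\psi:C\to M(B)$; equivariance of $\psi$ is then a separate computation with the flip $\Sigma$ and the coactions $\gamma\otimes_*\id$ and $\epsilon\otimes_*\id$, and $\zeta\circ\psi=\phi$ is read off from the commuting diagram. These steps --- the relative-commutant argument that produces $\psi$ at all, and the equivariance check --- are the actual content of the reverse direction, and your proposal leaves both inside the black box. You correctly identify $\Phi$ and its naturality as the right tool, but the argument as written does not yet constitute a proof.
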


\begin{proof}
First suppose that $(B,\epsilon,\zeta)$ is a maximalizer.
To see that $\zeta\times G$ is an isomorphism, it will suffice to know that there is \emph{at least one} maximalizer $(C,\gamma,\sigma)$ for which $\sigma\times G$ is an isomorphism;
for example, this holds for the constructions of maximalizers in both \cite{fischer:quantum} and \cite{maximal}.
By universality of maximalizers there is an isomorphism
\[
\theta:(B,\epsilon,\zeta)\to (C,\gamma,\sigma).
\]
Then in particular $\theta$ gives an isomorphism $(B,\epsilon)\cong (C,\gamma)$ of coactions, and we have a commuting diagram
\[
\xymatrix{
B\times_\epsilon G \ar[r]^-{\theta\times G}_\cong \ar[dr]_{\zeta\times G}
&C\times_\gamma G \ar[d]^{\sigma\times G}_\cong
\\
&A\times_\delta G.
}
\]
Thus $\zeta\times G$ is an isomorphism.

Conversely, suppose that $\zeta\times G$ is an isomorphism, and let $(C,\gamma,\phi)$ be an object in $\mcoact\dn (A,\delta)$. We need to show that there is a unique morphism $\psi$ in \coact\ making the diagram
\[
\xymatrix{
(C,\gamma) \ar[dr]^\phi \ar@{-->}[d]_\psi^{!}
\\
(B,\epsilon) \ar[r]_-\zeta
&(A,\delta)
}
\]
commute.

Consider the diagram
\[
\xymatrix{
C \ar[rr]^{\id\otimes 1} \ar@{-->}[dd]_\psi^{!} \ar[dr]^\phi
&&
C\otimes\KK \ar'[d][dd]^\sigma \ar[dr]^{\phi\otimes\id}
&&
C\times G\times G \ar[ll]_{\Phi_C}^\cong \ar[dr]^{\phi\times G\times G}
\\
&A \ar[rr]^(.3){\id\otimes 1}
&&
A\otimes\KK
&&
A\times G\times G \ar[ll]^{\Phi_A}
\\
B \ar[rr]^{\id\otimes 1} \ar[ur]_\zeta
&&
B\otimes\KK \ar[ur]_{\zeta\otimes\id}
&&
B\times G\times G \ar[ll]^{\Phi_B}_\cong \ar[ur]_{\zeta\times G\times G}^\cong
}
\]
in \cs,
where we define
\[
\sigma
=\Phi_B
\circ(\zeta\times G\times G)\inv
\circ(\phi\times G\times G)
\circ(\Phi_C)\inv,
\]
so that the diagram (without $\psi$) commutes.
We must show that there is a unique morphism $\psi$ making the left triangle commute, and moreover that $\psi$ is $\gamma-\epsilon$ equivariant.

Note that by crossed-product duality theory we have
\[
\bar\sigma\bigm|_{\bigl(1_{M(C)}\otimes\KK\bigr)}=1_{M(B)}\otimes\id_{\KK}.
\]
It follows that $\bar\sigma$ maps $C\otimes 1_{M(\KK)}$ into (the canonical image in $M(B\otimes \KK)$ of) $M(B)\otimes 1_\KK$. Thus there is a unique homomorphism $\psi:C\to M(B)$ such that
\[
\sigma=\psi\otimes 1_{M(\KK)},
\]
and moreover $\psi$ is nondegenerate since $\sigma$ is.

For the equivariance of $\psi$, note that, again by the general theory of crossed-product duality, the morphism $\sigma$ is $(\gamma\otimes_*\id)-(\epsilon\otimes_*\id)$ equivariant,
where by ``$\otimes_*$'' we mean that, in order to have an honest coaction, tensoring with $\id_\KK$ must be followed by a switching of the last two factors in the triple tensor product, so that, for example,
\[
\gamma\otimes_*\id=(\id\otimes\Sigma)\circ(\gamma\otimes\id),
\]
where
\[
\Sigma:C^*(G)\otimes\KK\to \KK\otimes C^*(G)
\]
is the flip isomorphism.
Thus we have
\begin{align*}
(\id\otimes\Sigma)\circ\bigl((\epsilon\circ\psi)\otimes\id\bigr)
&=(\id\otimes\Sigma)\circ(\epsilon\otimes\id)\circ(\psi\otimes\id)
\\&=(\epsilon\otimes_*\id)\circ\sigma
\\&=(\sigma\otimes\id)\circ(\gamma\otimes_*\id)
\\&=(\psi\otimes\id\otimes\id)\circ(\id\otimes\Sigma)\circ(\gamma\otimes\id)
\\&=(\id\otimes\Sigma)\circ(\psi\otimes\id\otimes\id)\circ(\gamma\otimes\id)
\\&=(\id\otimes\Sigma)\circ\Bigl(\bigl((\psi\otimes\id)\circ\gamma\bigr)\otimes\id\Bigr),
\end{align*}
so because $\id\otimes\Sigma$ is injective we have
$(\epsilon\circ\psi)\otimes\id=\bigl((\psi\otimes\id)\circ\gamma\bigr)\otimes\id$,
and therefore $\epsilon\circ\psi=(\psi\otimes\id)\circ\gamma$.
\end{proof}

\begin{remssub}
(i)~The above property of giving isomorphic crossed products was in fact the definition of maximalization given in \cite{fischer:quantum} and \cite{maximal}. In \defnref{maximal normal def} we use the universal property as the definition because it can be stated completely within the original category.
Also, in \cite{fischer:quantum} and \cite{maximal} the property involving isomorphic crossed products was only shown to imply the universal property of~\defnref{maximal normal def}, not that the two properties are in fact equivalent, as proved above.

(ii)~In \cite[Lemma~3.2]{clda} it is shown that in fact \emph{any} morphism $\phi:(A,\delta)\to (B,\delta)$ in \coact\ for which $\phi\times G$ is an isomorphism is surjective. Moreover, in \cite[Proposition~3.1]{clda} it is shown that $\phi$ is in fact an isomorphism if either $(A,\delta)$ is normal or $(B,\epsilon)$ is maximal.
\end{remssub}

\begin{notnsub}
For every coaction $(A,\delta)$ we assume that a 
maximalizer
\[
q^m:(A^m,\delta^m)\to (A,\delta)
\]
has been chosen, with the proviso that if $(A,\delta)$ is maximal then
\[
(A^m,\delta^m)=(A,\delta)\and q^m=\id_A.
\]
\end{notnsub}

Thus it follows 
that there is a unique functor $\maxi:\coact\to\mcoact$ that
takes each object $(A,\delta)$ to $(A^m,\delta^m)$
and is a right adjoint to the inclusion functor,
so that
\mcoact\ is a coreflective subcategory of~\coact\
and $\maxi$ is a coreflector,
with counit $q^m$.
Moreover, since we have chosen the coreflector to do nothing to maximal coactions, the unit of this coreflector is the identity transformation on the identity functor on the subcategory \mcoact.
What the maximalization functor does to morphisms is characterized as follows: if $\phi:(A,\delta)\to (B,\epsilon)$ in \coact, then the maximalization of $\phi$ is the unique morphism $\phi^m$ in \mcoact\footnote{indeed, unique in \coact, since the subcategory \mcoact\ is full} making the diagram
\[
\xymatrix{
(A^m,\delta^m) \ar[r]^{q^m} \ar[d]_{\phi^m}
&(A,\delta) \ar[d]^\phi
\\
(B^m,\epsilon^m) \ar[r]_{q^m}
&(B,\epsilon)
}
\]
commute.

We have now defined a 
coreflector $\maxi:\coact\to\mcoact$
and a 
reflector $\nor:\coact\to\ncoact$.
The following two lemmas show that $\maxi$ and $\nor$ satisfy 
Properties \ref{two-way}~(F) and~(I).

\begin{lemsub}
\label{q^m normalizer}
Let $(A,\delta)$ be a normal coaction.
Then not only is 
$q^m:(A^m,\delta^m)\to (A,\delta)$
a maximalizer,
it is also a 
normalizer.
\end{lemsub}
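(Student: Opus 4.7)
The plan is to exploit the two characterizations of maximalizers and normalizers in terms of crossed products, namely Propositions~\ref{normal universal} and~\ref{maximal universal}, which together essentially reduce the lemma to a one-line observation.

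First I would recall what needs to be shown: since $q^m:(A^m,\delta^m)\to (A,\delta)$ lands in $\ncoact$ (as $(A,\delta)$ is assumed normal), the triple $(A,\delta,q^m)$ is an object of $(A^m,\delta^m)\dn\ncoact$. To say $q^m$ is a normalizer of $(A^m,\delta^m)$ is to say that this triple is initial in that comma category. By \propref{normal universal} (applied with $(A^m,\delta^m)$ in the role of the coaction being normalized), this is equivalent to the morphism
\[
q^m\times G:A^m\times_{\delta^m}G\to A\times_\delta G
\]
being an isomorphism in \cs.

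But this last statement is exactly the content of \propref{maximal universal} applied to the maximalizer $(A^m,\delta^m,q^m)$ of $(A,\delta)$: since $q^m$ is a maximalizer by our standing choice, $q^m\times G$ is an isomorphism. So the proof reduces to the chain: $q^m$ is a maximalizer $\Rightarrow$ $q^m\times G$ is an isomorphism $\Rightarrow$ $q^m$ is a normalizer.

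There is no significant obstacle here; the lemma is essentially a bookkeeping consequence of the fact that the two universal properties (initial in $(A^m,\delta^m)\dn\ncoact$ and final in $\mcoact\dn(A,\delta)$) have the same equivalent reformulation, namely that the induced map on crossed products is an isomorphism. The only thing to be a bit careful about is the direction of the arrow and the identification of which coaction plays which role when invoking each proposition.
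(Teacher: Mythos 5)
Your argument is exactly the paper's: note that normality of $(A,\delta)$ makes $(A,\delta,q^m)$ an object of $(A^m,\delta^m)\dn\ncoact$, deduce from Proposition~\ref{maximal universal} that $q^m\times G$ is an isomorphism, and conclude via Proposition~\ref{normal universal}. Correct and the same route; nothing to add.
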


Thus, not only is 
$q^m$
final in $\mcoact\dn (A,\delta)$, it is also initial in $(A^m,\delta^m)\dn \ncoact$.

\begin{proof}
$(A,\delta)$ is normal, 
so $(A,\delta,q^m)$ is an object of $(A^m,\delta^m)\dn\ncoact$.
Since $q^m\times G:A^m\times_{\delta^m} G\to A\times_\delta G$ is an isomorphism,
the result follows from \propref{normal universal}.
\end{proof}

\begin{lemsub}
\label{q^n maximalizer}
Let $(A,\delta)$ be a maximal coaction.
Then not only is 
$q^n:(A,\delta)\to (A^n,\delta^n)$
a 
normalizer,
it is also a 
maximalizer.
\end{lemsub}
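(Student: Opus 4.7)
The plan is to mimic, in dual form, the proof just given for \lemref{q^m normalizer}. The strategy is to recognize $q^n$ as a morphism sitting in the ``other'' comma category and then invoke the crossed-product characterization of maximalizers (\propref{maximal universal}) together with the analogous characterization of normalizers (\propref{normal universal}) that was used to build $q^n$ in the first place.

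First I would observe that since $(A,\delta)$ is assumed maximal, it is itself an object of $\mcoact$, so the triple $(A,\delta,q^n)$ makes sense as an object of the comma category $\mcoact\dn (A^n,\delta^n)$. This is the only place that maximality of $(A,\delta)$ enters the argument, and it is exactly parallel to the way normality of $(A,\delta)$ was used in the proof of \lemref{q^m normalizer} to ensure $(A,\delta,q^m)\in\obj\bigl((A^m,\delta^m)\dn\ncoact\bigr)$.

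Next, because $q^n$ is by construction a normalizer of $(A,\delta)$, the forward direction of \propref{normal universal} tells us that the induced morphism
\[
q^n\times G:A\times_\delta G\to A^n\times_{\delta^n} G
\]
is an isomorphism in \cs.

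Finally I would apply the converse direction of \propref{maximal universal} to the object $(A,\delta,q^n)$ of $\mcoact\dn (A^n,\delta^n)$: since $q^n\times G$ is an isomorphism, $(A,\delta,q^n)$ is a final object of that comma category, i.e., a maximalizer of $(A^n,\delta^n)$. I do not expect any substantive obstacle here; the whole argument is really just the observation that the two universal properties of \defnref{maximal normal def} have been reduced, via \propref{normal universal} and \propref{maximal universal}, to the \emph{same} condition (the induced map on crossed products being an isomorphism), so a morphism enjoying one property automatically enjoys the other as soon as its source and target land in the appropriate subcategories.
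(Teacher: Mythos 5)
Your proposal is correct and follows essentially the same route as the paper: the paper likewise notes that maximality of $(A,\delta)$ makes $(A,\delta,q^n)$ an object of $\mcoact\dn(A^n,\delta^n)$, that $q^n\times G$ is an isomorphism, and then invokes \propref{maximal universal}. Your explicit appeal to \propref{normal universal} to justify that $q^n\times G$ is an isomorphism is exactly the intended justification, and your closing observation --- that both universal properties have been reduced to the single condition of inducing an isomorphism on crossed products --- is precisely the point of the two lemmas.
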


Thus, not only is 
$q^n$
initial in $(A,\delta)\dn\ncoact$, it is also final in $\mcoact\dn (A^n,\delta^n)$.

\begin{proof}
The proof is similar to the above:
$(A,\delta)$ is maximal, 
so $(A,\delta,q^n)$ is an object of $\mcoact\dn(A^n,\delta^n)$.  
Since $q^n\times G:A\times_\delta G\to A^n\times_{\delta^n} G$ is an isomorphism,
the result follows from \propref{maximal universal}.
\end{proof}

\begin{corsub}
[{\cite[Theorem~3.3]{clda}}] 
$\nor|_{\mcoact} \dashv \maxi|_{\ncoact}$ is an adjoint equivalence.
In particular, 
$\nor|_{\mcoact}:\mcoact\to\ncoact$
is an equivalence, and
$\maxi|_{\ncoact}$ is a quasi-inverse.
\end{corsub}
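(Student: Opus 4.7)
The plan is to invoke Corollary~\ref{main equivalence} directly, which reduces the task to verifying Properties~\ref{two-way}~(F) and~(I) in the present context. Here $\MM=\mcoact$ is full and coreflective in $\CC=\coact$ with coreflector $\maxi$ and counit $q^m$, while $\NN=\ncoact$ is full and reflective with reflector $\nor$ and unit $q^n$, so the notation of Section~\ref{equivalence} specializes to $\psi_{(A,\delta)}=q^m_{(A,\delta)}$ and $\theta_{(A,\delta)}=q^n_{(A,\delta)}$.

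First I would check Property~\ref{two-way}~(F): for each object $x=(A,\delta)$ of $\mcoact$, the pair $(x,\theta_x)=((A,\delta),q^n)$ should be a final object in $\mcoact\dn \nor x=\mcoact\dn (A^n,\delta^n)$, i.e.\ a universal morphism from $\mcoact$ to $(A^n,\delta^n)$. But this is precisely the content of \lemref{q^n maximalizer}, which asserts that when $(A,\delta)$ is maximal, the normalizer $q^n$ is also a maximalizer of $(A^n,\delta^n)$.

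Next I would verify Property~\ref{two-way}~(I): for each object $y=(A,\delta)$ of $\ncoact$, the pair $(y,\psi_y)=((A,\delta),q^m)$ should be an initial object in $\maxi y\dn \ncoact=(A^m,\delta^m)\dn \ncoact$, i.e.\ a universal morphism from $(A^m,\delta^m)$ to $\ncoact$. This is exactly the content of \lemref{q^m normalizer}, which asserts that when $(A,\delta)$ is normal, the maximalizer $q^m$ is also a normalizer of $(A^m,\delta^m)$.

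With both properties verified, Corollary~\ref{main equivalence} immediately yields that $\nor|_{\mcoact}\dashv \maxi|_{\ncoact}$ is an adjoint equivalence, so in particular $\nor|_{\mcoact}$ and $\maxi|_{\ncoact}$ are quasi-inverse equivalences of categories. There is no real obstacle here: all the technical work has been done in the two preceding lemmas, and the corollary is essentially a bookkeeping matter of matching the abstract categorical hypotheses to the concrete maximal/normal setup.
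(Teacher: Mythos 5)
Your proposal is correct and follows exactly the paper's own route: the paper's proof of this corollary is precisely to combine Lemma~\ref{q^m normalizer} and Lemma~\ref{q^n maximalizer} (which verify Properties~\ref{two-way}~(I) and~(F), respectively) with Corollary~\ref{main equivalence}. Your identification of $\theta_x=q^n$ and $\psi_y=q^m$ and the matching of each property to the corresponding lemma is accurate.
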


\begin{proof}
This follows immediately from the above two lemmas and
Corollary \ref{main equivalence}.
\end{proof}

We now show that $\maxi$ and $\nor$ satisfy 
the extra property recorded in \hypref{factor initial} 
and its dual analog mentioned in Remark \ref{analoghyp}.

\begin{lemsub}
Let $(A,\delta)$ be a coaction.
Then 
$q^n\circ q^m:(A^m,\delta^m)\to (A^n,\delta^n)$
is both
a 
normalizer
and a 
maximalizer.
\end{lemsub}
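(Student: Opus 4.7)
The plan is to reduce both claims to the crossed-product characterizations provided by \propref{normal universal} and \propref{maximal universal}, using the functoriality of the full crossed product.

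First I would observe that the proposed morphism lives in the right comma categories: by construction $(A^m,\delta^m)$ is maximal and $(A^n,\delta^n)$ is normal, so $(A^n,\delta^n,q^n\circ q^m)$ is an object of $(A^m,\delta^m)\dn\ncoact$ and $(A^m,\delta^m,q^n\circ q^m)$ is an object of $\mcoact\dn(A^n,\delta^n)$. Thus, by \propref{normal universal}, to show that $q^n\circ q^m$ is a normalizer it suffices to show that $(q^n\circ q^m)\times G$ is an isomorphism in \cs, and, by \propref{maximal universal}, the same condition suffices to show that $q^n\circ q^m$ is a maximalizer.

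Next, by functoriality of the full crossed product,
\[
(q^n\circ q^m)\times G = (q^n\times G)\circ(q^m\times G).
\]
The factor $q^m\times G$ is an isomorphism by the forward direction of \propref{maximal universal} applied to the maximalizer $q^m:(A^m,\delta^m)\to(A,\delta)$, and the factor $q^n\times G$ is an isomorphism by the forward direction of \propref{normal universal} applied to the normalizer $q^n:(A,\delta)\to(A^n,\delta^n)$. Hence their composition $(q^n\circ q^m)\times G$ is an isomorphism, and both conclusions follow.

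There is really no serious obstacle; the work has already been done in establishing the crossed-product characterizations of normalizers and maximalizers, and the lemma is essentially their combination via functoriality. The only thing one needs to be slightly careful about is that we are using the full crossed product throughout, so that functoriality $(\phi\circ\psi)\times G=(\phi\times G)\circ(\psi\times G)$ is genuinely automatic.
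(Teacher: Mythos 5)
Your proposal is correct and follows essentially the same route as the paper: verify membership in the two comma categories, use functoriality of the crossed product to write $(q^n\circ q^m)\times G=(q^n\times G)\circ(q^m\times G)$, note both factors are isomorphisms, and invoke the crossed-product characterizations of normalizers and maximalizers. The only cosmetic difference is that the paper proves the normalizer claim explicitly and declares the maximalizer claim ``similar,'' whereas you observe up front that the single condition on $(q^n\circ q^m)\times G$ settles both at once.
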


The notation in the above lemma is unambiguous, but just to be clear: in the composition $q^n\circ q^m$ the maps are
\[
q^n:A\to A^n\and q^m:A^m\to A.
\]

\begin{proof}
We only prove the first statement; the second one is similar.
The coaction $(A^n,\delta^n)$ is normal, and
$q^n\circ q^m$ is an equivariant surjection, since $q^n$ and $q^m$ are. By functoriality of crossed products, we have
\[
(q^n\times G)\circ (q^m\times G)=(q^n\circ q^m)\times G
:A^m\times_{\delta^m} G\to A^n\times_{\delta^n} G.
\]
Since both $q^n\times G$ and $q^m\times G$ are isomorphisms, so is $(q^n\circ q^m)\times G$.
Thus 
$q^n\circ q^m$ is a normalizer,
by
\propref{normal universal}.
\end{proof}

The following three consequences may be new, and result from careful consideration of the categorical perspective:

\begin{corsub}
\label{nor factor}
$\nor\cong \nor|_{\mcoact}\circ\maxi$
and
$\maxi\cong\maxi|_{\ncoact}\circ\nor$.
\end{corsub}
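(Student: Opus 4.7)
The plan is simply to apply the abstract machinery of Section~\ref{boilerplate} to the present setting, with $\CC=\coact$, $\MM=\mcoact$, $\NN=\ncoact$, $M=\maxi$, $N=\nor$, the unit $\theta$ of $\nor\dashv\inc_{\ncoact}$ being given by $q^n$, and the counit $\psi$ of $\inc_{\mcoact}\dashv\maxi$ being given by $q^m$. We have already verified that Properties~\ref{two-way}~(F) and~(I) hold (Lemmas~\ref{q^m normalizer} and~\ref{q^n maximalizer}), so by \corref{main equivalence} the restricted adjunction $\nor|_{\mcoact}\dashv\maxi|_{\ncoact}$ is an adjoint equivalence, as needed in Section~\ref{boilerplate}.

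The key remaining point is to check that \hypref{factor initial} is satisfied in this example. For each coaction $(A,\delta)$ this says that
\[
\bigl(A^n,\delta^n,\,q^n\circ q^m\bigr)
\]
is an initial object in $(A^m,\delta^m)\dn\ncoact$, and this is precisely what the previous lemma asserts (together with surjectivity of $q^n\circ q^m$, which is automatic).

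With all hypotheses of Section~\ref{boilerplate} in place, \propref{N factor} applied to our setting immediately yields
\[
\nor\;\cong\;\nor|_{\mcoact}\circ\maxi,
\]
and \corref{M factor} yields
\[
\maxi\;\cong\;\maxi|_{\ncoact}\circ\nor.
\]
There is no real obstacle here; the work is all in the general categorical framework and in the verification of \hypref{factor initial}, both of which are already done. The only thing to be careful about is matching up the notation $(I,J,M,N,\theta,\psi)$ from Section~\ref{Ross}/Section~\ref{boilerplate} with $(\inc_{\mcoact},\inc_{\ncoact},\maxi,\nor,q^n,q^m)$ here, but this is a purely bookkeeping matter.
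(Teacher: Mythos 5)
Your proposal is correct and follows exactly the paper's route: the paper's own proof is a one-line appeal to \propref{N factor} and \corref{M factor}, with Properties~\ref{two-way}~(F) and~(I) supplied by Lemmas~\ref{q^m normalizer} and~\ref{q^n maximalizer} and \hypref{factor initial} supplied by the lemma showing $q^n\circ q^m$ is a normalizer. The only difference is that you spell out the hypothesis-checking that the paper leaves implicit in the surrounding text.
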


\begin{proof}
This now follows immediately from 
\propref{N factor} and \corref{M factor}
\end{proof}

\begin{corsub}
$\maxi$ and $\nor$ are both faithful.
\end{corsub}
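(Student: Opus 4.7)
The plan is to apply \corref{faithful}, which asserts the equivalence of four conditions on the coreflector $M=\maxi$ and the reflector $N=\nor$. It suffices to verify any one of them; the most convenient is condition~(i), namely that the counit $q^m_{(A,\delta)}:(A^m,\delta^m)\to(A,\delta)$ is an epimorphism in \coact\ for every coaction $(A,\delta)$.

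First I would invoke \lemref{maximalizer surjective}: every maximalizer is surjective, so $q^m_{(A,\delta)}$ is a surjection of $C^*$-algebras. Next I would record the routine fact that any surjective morphism in \cs\ is an epimorphism: if $\phi:A\to B$ is surjective and $\alpha,\beta:B\to C$ in \cs\ satisfy $\alpha\circ\phi=\beta\circ\phi$, then $\alpha$ and $\beta$ agree on $\phi(A)=B$, hence on all of $M(B)$ via uniqueness of the canonical nondegenerate extensions, so $\alpha=\beta$. Since the \coact-morphisms out of $(A,\delta)$ are just the \cs-morphisms out of $A$ that happen to be equivariant, the same argument shows $q^m_{(A,\delta)}$ is an epimorphism in \coact.

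Having verified condition~(i) of \corref{faithful} for every $(A,\delta)$, that corollary immediately gives condition~(ii), faithfulness of $\maxi$, and condition~(iii), faithfulness of $\nor$. The only non-mechanical ingredient is the passage from surjectivity to epimorphism in the nondegenerate category \cs, but this is standard, so I do not foresee any substantive obstacle.
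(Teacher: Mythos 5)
Your proposal is correct and follows exactly the paper's route: the paper's own proof likewise notes that maximalizers are surjective, hence epimorphisms in \coact, and then invokes \corref{faithful}. You have merely spelled out the standard surjective-implies-epimorphism step in \cs\ in more detail, which the paper records separately in its appendix.
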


\begin{proof}
Since maximalizers are surjective, they are epimorphisms in \coact, so this follows immediately from 
\corref{faithful}.
\end{proof}

\begin{corsub}
\label{q^n monomorphism}
If $(A,\delta)$ is a coaction, then the map $q^n:(A,\delta)\to (A^n,\delta^n)$, and hence every normalizer of $(A,\delta)$, is a monomorphism in~\coact.
\end{corsub}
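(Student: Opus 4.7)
The plan is to deduce this as a direct consequence of Corollary~\ref{faithful} applied to the present setting, where $\psi = q^m$ and $\theta = q^n$. The previous corollary already establishes that $\nor$ and $\maxi$ are faithful by observing that every maximalizer $q^m:(A^m,\delta^m)\to (A,\delta)$ is surjective (\lemref{maximalizer surjective}) and hence an epimorphism in \coact; indeed, a surjective equivariant nondegenerate homomorphism is certainly right-cancellable among equivariant nondegenerate homomorphisms.

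First I would invoke the equivalence of conditions (i) and (iv) in Corollary~\ref{faithful}: since $q^m = \psi_{(A,\delta)}$ is an epimorphism in \coact\ for every coaction, it follows automatically that $q^n = \theta_{(A,\delta)}$ is a monomorphism in \coact\ for every coaction $(A,\delta)$. This handles the statement for the chosen normalizer $q^n$.

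For the ``every normalizer'' part, let $\eta:(A,\delta)\to (B,\epsilon)$ be any normalizer. Since $(A^n,\delta^n,q^n)$ and $(B,\epsilon,\eta)$ are both initial objects in the comma category $(A,\delta)\dn\ncoact$, there is a unique isomorphism $\alpha:(A^n,\delta^n)\to (B,\epsilon)$ in \ncoact\ (and hence in \coact) with $\alpha\circ q^n = \eta$. The composition of a monomorphism with an isomorphism is a monomorphism, so $\eta$ is a monomorphism in \coact. I do not expect any real obstacle here: the entire statement is a formal consequence of the general categorical result in Corollary~\ref{faithful} combined with the already-verified fact that maximalizers are surjective.
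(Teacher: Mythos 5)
Your proposal is correct and follows essentially the same route as the paper: the paper deduces the first part from the faithfulness of $\nor$ (established in the preceding corollary via the surjectivity, hence epimorphy, of maximalizers and Corollary~\ref{faithful}) together with Lemma~\ref{properties}, which is exactly the content of the implication (i)$\Rightarrow$(iv) of Corollary~\ref{faithful} that you invoke. The ``every normalizer'' part is handled identically in both arguments, via uniqueness of initial objects in $(A,\delta)\dn\ncoact$.
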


\begin{proof}
The first part follows immediately from 
\lemref{properties},
and then the second part follows since any two normalizers of $(A,\delta)$ are isomorphic in \coact.
\end{proof}

\begin{remsub}
The preceding corollary surprised us at first.
Certainly $q^n:A\to A^n$ is not generally a monomorphism in \cs, because it would then have to be injective,
which it frequently fails to be --- for example, if $G$ is a locally compact group then
$q^n:(C^*(G),\delta_G)\to (C^*_r(G),\delta^n_G)$ 
is the integrated form of the regular representation, 
which is noninjective if $G$ is nonamenable.

It is instructive to repeat 
\remref{equivalence fail}
in the present context:
let us examine 
exactly
how
the functor
$\nor:\coact\to\ncoact$ itself fails to be an equivalence of categories.
We have seen that $\nor$ is faithful, and it is not only essentially surjective, as any reflector in a full reflective subcategory must be, but in our case is actually \emph{surjective} on objects, because we have insisted that the reflector satisfy
\[
\nor|_{\ncoact}=1|_{\ncoact}.
\]
Thus the unit $q^n$, although it is always both an epimorphism  and a monomorphism in \coact, is not generally an isomorphism. In particular, it is not a \emph{split} epimorphism, so by
\lemref{properties}
$\nor$ is not full, and that is the only property of equivalences that it fails to possess.
\end{remsub}

\subsection{Compact quantum groups}
\label{quantum}

Our next example of the reflective-coreflective equivalence involves compact quantum groups
as defined by S.L.~Woronowicz~\cite{Wo1, Wo2} -- see also~\cite{KT, MvD, BMT}.  
For the ease of the reader, we begin by recalling  some basic facts about these objects. 

A \emph{compact quantum group} $(A, \Delta )$ consists of
a unital $C^*$-algebra $A$ (with unit $1=1_A$) 
and a unital homomorphism $\D: A \rightarrow A \otimes A$ (called the co-multiplication)
satisfying
\[ (\id \otimes \D)\circ \D = (\D \otimes \id)\circ \D, \]
and such that the linear spans of $(1 \otimes A) \D(A)$ 
and $(A \otimes 1) \D(A)$ are each dense in $A \otimes A$. 
For any compact quantum group $(A,\Delta)$, 
there exists a unique state $h=h_A$ on $A$, 
called the {\it Haar state} of $(A, \Delta )$, 
which satisfies
\[ (h \otimes \id) \circ\D = (\id \otimes h) \circ\D =  h(\cdot) 1. \]
(These conditions are known, respectively, as \emph{left-} and \emph{right-invariance} of~$h$.)

By a \emph{Hopf $*$-subalgebra} $\AA$ of $(A, \D )$  we mean a Hopf $*$-algebra~$\AA$ 
which is a unital $*$-subalgebra of~$A$ 
with co-multiplication given by restricting the co-multiplication $\D$ from $A$ to $\AA$. 
(As a Hopf $*$-algebra, $\AA$ has a co-unit and a co-inverse, 
but they won't play any role in our discussion).

Any compact quantum group $(A, \D )$ has a canonical dense Hopf $*$-subalgebra $\AA$, 
called the \emph{associated Hopf $*$-algebra} of $(A, \D)$;  
$\AA$ is the linear span of the matrix entries 
of all finite dimensional co-representations of $(A, \D )$. 
Here, when $n \in \N$, an \emph{$n$-dimensional co-representation} of $\AD$ means 
a unitary matrix $U=(u_{ij}) \in M_n(A)$ satisfying 
\[\D(u_{ij}) = \sum_{k=1}^{n} u_{ik} \otimes u_{kj}, \quad i, j = 1, \ldots, n\,.\] 

The associated Hopf $*$-algebra of  $(A, \D)$
is the unique dense Hopf $*$-subalgebra of $(A, \D)$ 
(see the appendix of \cite{BMT} for a proof).
It is known (\cf~\cite{Wo2}) that the
Haar state of $(A, \D )$ is faithful on $\AA$, but 
not on $A$ in general. 

Now let $\AD$ and $(B,\D')$ be compact quantum groups 
with associated Hopf $*$-algebras $\AA$ and $\BB$, respectively.  
A \emph{quantum group morphism} from  $\AD$ to $(B,\D')$ is 
a unital homomorphism ${\pi : A\to B}$ satisfying
$$\D'\circ \pi={(\pi\otimes \pi)\circ\D}.$$ 
Using this equation, one easily sees that 
if $U=(u_{ij}) \in M_n(A)$ is a co-representation of $\AD$, 
then $V= (\pi(u_{ij})) \in M_n(B)$ is  a co-representation of $(B, \D')$. 
It follows that $\pi(\AA)\subseteq\BB$.

The obvious category whose objects are compact quantum groups 
and morphisms are quantum group morphisms 
has too many morphisms for our purposes;
our category $\CC$ will be obtained by considering only those morphisms 
satisfying a certain natural condition. 
The following lemma illustrates two ways of describing this condition. 
Whenever $\pi$ satisfies one of these equivalent conditions, 
we will say that $\pi$ is a \emph{strong} quantum group morphism.

\begin{lemsub}\label{fundlem} 
Let $\pi$ be a quantum group morphism from  $\AD$ to $(B,\D')$. 
Then the restriction of $\pi$ to $\AA$ is injective 
if and only if
$h_A = h_B \circ \pi$. 
\end{lemsub}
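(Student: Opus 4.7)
The plan is to prove the two directions separately, exploiting faithfulness of the Haar state on the associated Hopf $*$-algebra for the easy direction, and using uniqueness of the Haar state for the harder converse.

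For the direction $(h_A = h_B\circ \pi) \Rightarrow (\pi|_\AA \text{ injective})$, I would argue directly. Suppose $a\in \AA$ with $\pi(a)=0$. Since $\pi$ is a unital $*$-homomorphism, $h_A(a^*a)=h_B(\pi(a^*a))=h_B(\pi(a)^*\pi(a))=0$, and faithfulness of $h_A$ on $\AA$ (the fact recalled from \cite{Wo2}) forces $a=0$. This is a two-line argument.

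For the converse, I would introduce the state $\tau:=h_B\circ \pi$ on $A$, which is a state since $\pi$ is a unital $*$-homomorphism and $h_B$ is a state. The goal is to show $\tau=h_A$, and the plan is to verify that $\tau|_\AA$ is an invariant state on the Hopf $*$-algebra $\AA$; then uniqueness of the Haar state on a CQG Hopf $*$-algebra gives $\tau|_\AA = h_A|_\AA$, and norm-continuity of both $\tau$ and $h_A$ together with density of $\AA$ in $A$ upgrades the equality to all of $A$. The key computation, for any $a\in A$, is
\[
\pi\bigl((\tau\otimes \id)\D(a)\bigr)=(h_B\otimes \pi)\D(a)=(h_B\otimes \id_B)(\pi\otimes \pi)\D(a)=(h_B\otimes \id_B)\D'(\pi(a))=h_B(\pi(a))\,1_B,
\]
using the intertwining identity $\D'\circ\pi=(\pi\otimes\pi)\circ\D$ and left-invariance of $h_B$. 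The right-hand side equals $\pi(\tau(a)1_A)$. An identical calculation with $(\id\otimes\tau)\D(a)$, this time invoking right-invariance of $h_B$, yields $\pi((\id\otimes\tau)\D(a))=\pi(\tau(a)1_A)$.

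The main obstacle — and the place where the hypothesis is used — is extracting from these two identities the unadorned invariance relations $(\tau\otimes\id)\D(a)=\tau(a)1_A$ and $(\id\otimes\tau)\D(a)=\tau(a)1_A$. The point is that, when $a\in\AA$, both $(\tau\otimes\id)\D(a)$ and $\tau(a)1_A$ lie in $\AA$ (because $\AA$ is a Hopf $*$-subalgebra, so $\D(\AA)\subseteq \AA\odot\AA$, and $1_A\in\AA$), hence the injectivity of $\pi|_\AA$ is precisely what is needed to cancel $\pi$ from both sides. Once this is done, $\tau$ is an invariant state on the CQG algebra $\AA$, uniqueness of the Haar state on $\AA$ gives $\tau|_\AA=h_A|_\AA$, and a density/continuity argument finishes the proof.
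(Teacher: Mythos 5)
Your proposal is correct and follows essentially the same route as the paper: the easy direction via faithfulness of $h_A$ on $\AA$, and the converse via the computation $\pi\bigl((\tau\otimes\id)\D(a)\bigr)=h_B(\pi(a))1_B=\pi(\tau(a)1_A)$, cancelling $\pi$ using injectivity on $\AA$ (where both sides live), and concluding by invariance, density, and uniqueness of the Haar state. The only cosmetic difference is that the paper first extends invariance from $\AA$ to $A$ by density and then invokes uniqueness of the Haar state on $A$, whereas you invoke uniqueness on $\AA$ first and then extend by density; both orderings work.
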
 

\begin{proof} 
Assume first that $\pi_{|\AA} : \AA \to B$ is injective,
and set  $h' = h_B \circ \pi$. 
We will show that $h' = h_A$. 

Let $a \in \AA$. 
Then we have
\begin{align*}
\pi(h'(a) \, 1_A) 
&= h'(a) \, 1_B 
= h_B(\pi(a))\, 1_B 
= \big((h_B\otimes \id_B)\circ \D'\big)(\pi(a))\\
&= \big((h_B\otimes \id_B)\circ (\pi \otimes \pi)\big)(\D(a)))
= \pi\big((h' \otimes \id_A)(\D(a))\big).
\end{align*}
As $\D(a) \in \AA \odot \AA$, we have $(h' \otimes \id_A)(\D(a)) \in \AA$. 
The injectivity of $\pi$ on $\AA$ then implies that $(h' \otimes \id_A)(\D(a))= h'(a)\, 1_A$. 

In the same way, one gets $(\id_A \otimes h')(\D(a))= h'(a)\, 1_A$. 
Hence the state $h'$ is left- and right-invariant on $\AA$, and therefore also on $A$, 
by density of $\AA$ and  continuity of the involved maps. 
By the uniqueness property of the Haar state on $A$, 
it follows that $h'=h_A$, as desired. 

Assume now that $h_A = h_B\circ \pi$. 
To show that $\pi$ is injective on $\AA$, consider $a \in \AA$ satisfying $\pi(a) = 0$. 
Then we have
\[
h_A(a^*a)= h_B(\pi(a^*a)) = h_B(\pi(a)^*\pi(a))= h_B(0)=0
\]
But $h_A$ is faithful on $\AA$, so $a = 0$. 
\end{proof}

It is straightforward to check that the usual composition (as maps) 
of two strong quantum group morphisms, whenever it makes sense, 
is again a strong quantum group morphism. 
The following definition is therefore meaningful. 

\begin{defnsub} 
The category $\CC$ has  compact quantum groups as objects. 
Its morphisms are strong quantum group morphisms. 
Composition of morphisms is given by usual composition of maps, 
while the identity morphisms are just the identity maps.  
\end{defnsub}

\subsubsection*{Reduced compact quantum groups}

Let $\AD$ be a compact quantum group with associated Hopf $*$-algebra $\AA$. 
The left kernel $N_A= \{a \in A \mid h_A(a^*a)=0\}$ of~$h_A$ 
is then known to be a two-sided ideal of $A$. 
Set $A_{\rm r}=A/N_A$ and let $\t_A$ denote the quotient map from $A$ onto $A_{\rm r}$. 

The $C^*$-algebra $A_{\rm r}$
can be made into a compact quantum group $(A_{\rm r},\D_{\rm r})$, 
called  the \emph{reduced quantum group} of~$(A,\Delta)$ 
(\cf~\cite{Wo1} and \cite[Section 2]{BMT} for details):

The co-multiplication $\D_{{\rm r}}$ is determined by the equation 
$\D_{{\rm r}}\circ \t_A=(\t_A\otimes \t_A)\circ  \D$.
The quotient map $\t_A$ is injective on~$\AA$ 
and $\t_A(\AA)$ is the Hopf $*$-algebra of $(A_{\rm r},\D_{{\rm r}})$. 
In particular, this means that $\t_A$ is a morphism in $\CC$ 
from $\AD$ to $(A_{\rm r},\D_{\rm r})$. 
Moreover, the Haar state of $(A_{\rm r},\D_{{\rm r}})$  is faithful 
and is the unique state $\hr$ of $A_{\rm r}$ such that $h_A=\hr\circ \t_A$. 

We will say that $\AD$ is \emph{reduced} whenever $h_A$ is faithful on $A$, 
\ie~whenever  $N_A = \{ 0\}$, in which case we will identify $(A_{\rm r},\D_{{\rm r}})$ with $\AD$. Clearly, the  reduced quantum group
of any $(A,\Delta)$ is reduced.

\begin{defnsub} The category $\RR$ is the full subcategory of $\CC$ whose objects are reduced compact quantum groups.
\end{defnsub}

To see that  reduction gives a functor $R$ from $\CC$ to $\RR$, we will use the following lemma.

\begin{lemsub} \label{redulem} 
Let $\pi$ be a strong quantum group morphism from  $\AD$ to $(B,\D')$. 
Then there exists a unique strong quantum group morphism 
$\pi_{\rm r}$ from  $(A_{\rm r},\D_{\rm r})$ to $(B_{\rm r} ,\D'_{\rm r})$ 
such that $\pi_{\rm r}\circ \t_A=\t_B \circ \pi $, 
that is, making the diagram
\[
\xymatrix{
\AD \ar[r]^{\t_A} \ar[d]_\pi
&(A_{\rm r},\D_{\rm r}) \ar[d]^{\pi_{\rm r}}
\\
(B,\D') \ar[r]_{\t_B}
&(B_{\rm r}, \D'_{\rm r})
}
\]
commute.
\end{lemsub}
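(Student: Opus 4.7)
The plan is to construct $\pi_{\rm r}$ by passing $\pi$ to the quotient and then verifying the required structure. First I would show that $\pi$ sends $N_A$ into $N_B$: because $\pi$ is strong, \lemref{fundlem} gives $h_A = h_B\circ\pi$, so for any $a\in N_A$,
\[
h_B(\pi(a)^*\pi(a)) = h_B(\pi(a^*a)) = h_A(a^*a) = 0,
\]
whence $\pi(a)\in N_B$. This forces $\pi$ to descend to a unique unital $*$-homomorphism $\pi_{\rm r}\colon A_{\rm r}\to B_{\rm r}$ satisfying $\pi_{\rm r}\circ\t_A = \t_B\circ\pi$, and uniqueness of any such $\pi_{\rm r}$ (in the category of unital $*$-homomorphisms) is immediate from the surjectivity of $\t_A$.

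Next I would verify that $\pi_{\rm r}$ is a quantum group morphism, \ie\ that $\D'_{\rm r}\circ\pi_{\rm r} = (\pi_{\rm r}\otimes\pi_{\rm r})\circ\D_{\rm r}$. Since $\t_A$ is surjective and all maps in sight are continuous, it suffices to establish this equality after precomposition with $\t_A$. Using the defining relations $\D_{\rm r}\circ\t_A = (\t_A\otimes\t_A)\circ\D$ and $\D'_{\rm r}\circ\t_B = (\t_B\otimes\t_B)\circ\D'$, together with $\D'\circ\pi = (\pi\otimes\pi)\circ\D$ and $\pi_{\rm r}\circ\t_A = \t_B\circ\pi$, both sides collapse to $(\t_B\otimes\t_B)\circ(\pi\otimes\pi)\circ\D$.

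To conclude that $\pi_{\rm r}$ is strong, I would invoke the Haar-preservation direction of \lemref{fundlem}. Combining $h_A = \hr\circ\t_A$, $h_B = h_{B_{\rm r}}\circ\t_B$, and the strength of $\pi$, I get
\[
\hr\circ\t_A \;=\; h_A \;=\; h_B\circ\pi \;=\; h_{B_{\rm r}}\circ\t_B\circ\pi \;=\; h_{B_{\rm r}}\circ\pi_{\rm r}\circ\t_A,
\]
and surjectivity of $\t_A$ then yields $\hr = h_{B_{\rm r}}\circ\pi_{\rm r}$, so $\pi_{\rm r}$ is strong by \lemref{fundlem}.

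The only genuine obstacle is bookkeeping in the co-multiplication square, but each ingredient is already on hand: strength of $\pi$ supplies the crucial Haar-state compatibility needed to descend $\pi$ to the quotient, and surjectivity of $\t_A$ lets one transport equalities from $\t_A(A)$ to all of $A_{\rm r}$. The proof therefore amounts to a short sequence of diagram chases.
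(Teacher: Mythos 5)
Your proposal is correct and follows essentially the same route as the paper's proof: show $\pi(N_A)\subseteq N_B$ via $h_A=h_B\circ\pi$, descend to the quotient, check the co-multiplication identity after precomposing with the surjection $\t_A$, and deduce strength from the Haar-state computation $h'_{\rm r}\circ\pi_{\rm r}\circ\t_A = h_B\circ\pi = h_A$. The only cosmetic difference is that the paper concludes $h_{\rm r}=h'_{\rm r}\circ\pi_{\rm r}$ from the uniqueness of the state $h_{\rm r}$ satisfying $h_A=h_{\rm r}\circ\t_A$, whereas you use surjectivity of $\t_A$ directly; both are valid and equivalent.
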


\begin{proof} 
As $h_A(a^*a)=  h_B(\pi(a)^*\pi(a))$ for $a \in A$, 
it follows readily  that $\pi(N_A) \subseteq N_B$.%
\footnote{This is not necessarily true if $\pi$ is not strong. 
Consequently, Lemma \ref{redulem} is not true for general quantum group morphisms.} 
Hence we may define $\pi_{\rm r}: A_{\rm r} \to B_{\rm r}$ by  
\[
\pi_{\rm r}(\t_A(a))=\t_B(\pi(a)), \quad a \in A.
\]
It is easy to check that 
$\D'_{\rm r} \circ \pi_{\rm r}\circ \t_A  
= (\pi_{\rm r} \otimes \pi_{\rm r})\circ \D_{\rm r}\circ \t_A$. 
This implies that $\pi_{\rm r}$ is a quantum group morphism  
from  $(A_{\rm r},\D_{\rm r})$ to $(B_{\rm r} ,\D'_{\rm r})$ 
satisfying $\pi_{\rm r}\circ \t_A=\t_B \circ \pi $.

Letting $h_{\rm r} $ and $h'_{\rm r} $ denote 
the respective Haar states of $(A_{\rm r},\D_{\rm r})$ and $(B_{\rm r}, \D'_{\rm r})$, 
we have
\[
(h'_{\rm r} \circ \pi_{\rm r})\circ \t_A  = h'_{\rm r}\circ \t_B \circ \pi = h_B \circ \pi = h_A.
\]
From the uniqueness property of $h_{\rm r}$, we get $h_{\rm r} = h'_{\rm r} \circ \pi_{\rm r}$, 
so $\pi_{\rm r}$ is a strong quantum group morphism. 
The uniqueness property of $\pi_{\rm r}$ is evident.
\end{proof}

If now $\pi$ and $\pi'$ are two composable morphisms in $\CC$, 
it is straightforward to deduce from the uniqueness property  that 
$(\pi \circ \pi')_{\rm r} = \pi_{\rm r} \circ \pi_{\rm r}'$.  
Hence, we may define $R$ as follows. 

\begin{defnsub} 
The functor $R: \CC \to \RR$ takes each object $\AD$ in~$\CC$ to $(A_{\rm r},\D_{\rm r})$ in~$\RR$, 
and each morphism $\pi$ in~$\CC$ to the morphism $\pi_{\rm r}$ in~$\RR$. 
\end{defnsub}

\begin{propsub}\label{redpro} 
The functor $R$ is a left adjoint to the inclusion functor $\inc_\RR: \RR \to \,  \CC$, 
and the unit $\t$ of $R \dashv \inc_\RR$ is given by $\t_x=\t_A$ for each $x=\AD$ in $\CC$. 
In particular, $\RR$ is reflective in $\CC$.
\end{propsub}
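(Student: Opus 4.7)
The plan is to verify that for each object $x = (A,\Delta)$ in $\CC$, the pair $((A_{\rm r},\Delta_{\rm r}),\tau_A)$ is a universal morphism from $(A,\Delta)$ to the inclusion functor $\inc_\RR$. Once this is established, the discussion of adjunctions in the Preliminaries (specifically, the statement that a functor extends uniquely once a universal morphism is chosen for each object, and then provides a left adjoint with the chosen morphisms forming the unit) will give $R \dashv \inc_\RR$ with unit $\tau$, and reflectivity of $\RR$ in $\CC$ is immediate.

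First, $\tau_A:(A,\Delta)\to (A_{\rm r},\Delta_{\rm r})$ is indeed a morphism in $\CC$ (noted in the paragraph introducing the reduced quantum group), and $(A_{\rm r},\Delta_{\rm r})$ belongs to $\obj\RR$ since its Haar state $h_{\rm r}$ is faithful. So the pair lies in the comma category $(A,\Delta)\dn \RR$. To check the universal property, let $\pi:(A,\Delta)\to (B,\Delta')$ be any morphism in $\CC$ with $(B,\Delta')\in\obj\RR$. We must produce a unique morphism $\bar\pi:(A_{\rm r},\Delta_{\rm r})\to (B,\Delta')$ in $\RR$ with $\bar\pi\circ \tau_A = \pi$.

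For existence, apply \lemref{redulem}: it produces $\pi_{\rm r}:(A_{\rm r},\Delta_{\rm r})\to (B_{\rm r},\Delta'_{\rm r})$ in $\CC$ with $\pi_{\rm r}\circ\tau_A=\tau_B\circ\pi$. Since $(B,\Delta')$ is reduced, we identify $(B_{\rm r},\Delta'_{\rm r})$ with $(B,\Delta')$ and $\tau_B$ with the identity map, so $\bar\pi:=\pi_{\rm r}$ satisfies $\bar\pi\circ\tau_A=\pi$. For uniqueness, suppose $\sigma:(A_{\rm r},\Delta_{\rm r})\to (B,\Delta')$ is any morphism in $\RR$ with $\sigma\circ\tau_A=\pi=\bar\pi\circ\tau_A$. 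Then $\sigma$ and $\bar\pi$ agree on the image $\tau_A(A)$, which is dense in $A_{\rm r}$; continuity of $*$-homomorphisms between $C^*$-algebras then forces $\sigma=\bar\pi$.

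With the universal property verified, the abstract machinery of the Preliminaries supplies the functor $R$ and the adjunction $R\dashv\inc_\RR$ with unit $\tau$; we have already defined $R$ directly via \lemref{redulem}, and checking that our direct definition agrees with the one produced by the universal property amounts to observing that on a morphism $\pi:(A,\Delta)\to(B,\Delta')$ the map $\pi_{\rm r}$ is precisely the unique morphism making the defining square commute, which was the content of the lemma. The only point requiring any care is the uniqueness argument via density of $\tau_A(A)$ in $A_{\rm r}$, but this is automatic because $\tau_A$ is a surjection onto $A_{\rm r}$ by construction, so actually $\sigma$ and $\bar\pi$ are determined set-theoretically by their values on $\tau_A(A)$; no continuity argument is even needed. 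I expect no serious obstacle.
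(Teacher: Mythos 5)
Your proposal is correct and follows essentially the same route as the paper: both verify that $((A_{\rm r},\Delta_{\rm r}),\tau_A)$ is a universal morphism from $(A,\Delta)$ to $\RR$ by invoking Lemma~\ref{redulem} and observing that for a reduced $(B,\Delta')$ one has $(B_{\rm r},\Delta'_{\rm r})=(B,\Delta')$ and $\tau_B=\id_B$, so that the existence-and-uniqueness clause of the lemma becomes exactly the required universal property. Your explicit remark that uniqueness is set-theoretically forced by surjectivity of $\tau_A$ is just an unpacking of the lemma's ``uniqueness is evident'' step.
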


\begin{proof} 
For each compact quantum group $x=\AD$, 
let $\t_x: x \to Rx$ be the morphism in $\CC$  given by $\t_x = \t_A$. 
Then Lemma \ref{redulem} implies that the map $\t$ which sends each $x$ to $\t_x$ 
is a natural transformation from $1_\CC$ to  $\inc_\RR \circ R$. 

Moreover, Lemma \ref{redulem} also implies that 
$(Rx,\t_x)$ is a universal morphism from $x$ to $\RR $ for each object $x = \AD$ in $\CC$.
Indeed,  consider an object  $y=(B,\D')$ in $\RR$ and a morphism $\pi: x \to y$ in $\CC$. 
Then $(B_{\rm r}, \D'_{\rm r}) = (B,\D')=y$ and $\t_B= \id_B$. 
Hence $ \pi_{\rm r}: Rx \to y$ is the unique morphism in $\RR$ such that 
$\pi = \pi_{\rm r} \circ \t_x$.

This shows that $R \dashv \inc_\RR$ and $\t$ is the unit of this adjunction.
\end{proof}

\subsubsection*{Universal compact quantum groups}

Let $\AD$ be a compact quantum group with associated Hopf $*$-algebra $\AA$.  
We  recall the construction of the universal compact quantum group 
associated to  $\AD$ (\cf~\cite[Section 3]{BMT} for more details). 

When $a \in \AA$, set $\nuu a = \sup_\phi \|\phi(a)\|$,
where the variable $\phi$ runs over all unital
homomorphisms $\phi$ from $\AA$ into any unital $C^*$-algebra $B$.
The function $\|\cdot\|_u :\AA\rightarrow [0,\infty ]$ is then a
$C^*$-norm on $\AA$ which majorises any other $C^*$-norm on~$\AA$.  
Let $\au$  be the $C^*$-algebra completion%
\footnote{As such a completion is unique only up to isomorphism, 
we actually make a choice here for each compact quantum group.} 
of $\AA$ with respect to the $C^*$-norm $\nuu{\cdot}$. 
As usual, we identify $\AA$ with its canonical copy inside $\au$. 
The $C^*$-algebra $\au$ has the universal property that every
unital homomorphism from $\AA$ to a unital $C^*$-algebra~$B$,
extends uniquely to a unital homomorphism from $\au$ to~$B$.

In particular, ${\D  : \AA\to \AA\odot \AA\subseteq \au\otimes \au}$ 
extends to a homomorphism 
\[
{\D_{\rm u} : \au\to \au\otimes \au},
\]
and $\ADu$ is then seen to be a compact quantum group, 
called the \emph{universal quantum group} of $\AD$.
Since $\AA$ is, by construction, a dense Hopf $*$-subalgebra of 
$\ADu$, it is the Hopf $*$-algebra associated to $\ADu$, by uniqueness.

By the universal property of $\au$, 
there is a canonical homomorphism  $\psi_A$  from $\au$ onto $A$ 
extending the identity map from $\AA$ to itself. 
Then $\D\circ \psi_A=(\psi_A\otimes\psi_A)\circ \D_{\rm u}$, 
and $h_A \circ \psi_A$ is the Haar state of $\ADu$, 
which just means that $\psi_A$ is a morphism in $\CC$ from $\ADu$ to $\AD$. 

A compact quantum group $\AD$ is called  \emph{universal} 
if  $\psi_A$ is injective. 
Equivalently, $\AD$ is universal if, and only if, 
the given  norm on $\AA$ is its greatest $C^*$-norm. 
Obviously, the  universal compact quantum group associated
to any $\AD$ is universal.

\begin{defnsub} 
The category $\UU$ is the full subcategory of $\CC$ 
whose objects are universal compact quantum groups.
\end{defnsub}

To see that universalization gives a functor $U$ from $\CC$ to $\UU$, we will use the following lemma.%
\footnote{As will be apparent from its proof, Lemma \ref{unilem} is also valid if we consider quantum group morphisms instead of strong quantum group morphisms.} 

\begin{lemsub} \label{unilem} 
Let $\pi$ be a strong quantum group morphism from  $\AD$ to $(B,\D')$. 
Then there exists a unique strong quantum group morphism 
$\pi_{\rm u}$ from  $\ADu$ to $(B_{\rm u} ,\D'_{\rm u})$ 
such that $\psi_B \circ \pi_{\rm u} = \pi\circ \psi_A$, 
that is, making the following diagram commute
\[
\xymatrix{
\ADu \ar[r]^{\psi_A} \ar[d]_{\pi_{\rm u}}
&\AD \ar[d]^\pi
\\
(B_{\rm u} ,\D'_{\rm u})\ar[r]_{\psi_B}
&(B,\D')
}
\]
\end{lemsub}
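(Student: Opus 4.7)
The plan is to build $\pi_{\rm u}$ via the universal property of $\au$ and then verify all of the required properties by continuity from the dense Hopf $*$-subalgebra $\AA$.

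Since $\pi$ is a quantum group morphism, it restricts to a unital $*$-homomorphism $\pi|_\AA:\AA \to \BB$, which I view as a unital $*$-homomorphism into the unital $C^*$-algebra $B_{\rm u}$. By the universal property of $\au$, it extends uniquely to a unital $*$-homomorphism $\pi_{\rm u}:\au \to B_{\rm u}$. To see that the required diagram commutes, note that $\psi_B\circ\pi_{\rm u}$ and $\pi\circ\psi_A$ are continuous maps $\au \to B$; on the dense subalgebra $\AA$, the map $\psi_A$ is the identity, while $\psi_B$ is the identity on $\pi_{\rm u}(\AA) = \pi(\AA) \subseteq \BB$, so both sides agree with $\pi|_\AA$ on $\AA$ and hence on all of $\au$.

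Next I would check that $\pi_{\rm u}$ is a strong quantum group morphism. For the co-multiplication identity, both $\D'_{\rm u}\circ \pi_{\rm u}$ and $(\pi_{\rm u}\otimes \pi_{\rm u})\circ \D_{\rm u}$ are continuous maps from $\au$ into $B_{\rm u}\otimes B_{\rm u}$; on $\AA$ they agree because $\AA$ is the associated Hopf $*$-algebra of $\ADu$ (so $\D_{\rm u}$ restricts to $\D$ on $\AA$), similarly $\D'_{\rm u}$ restricts to $\D'$ on $\BB$, and $\pi$ itself satisfies the intertwining identity on $\AA$. Density then gives equality on $\au$. Strongness of $\pi_{\rm u}$ follows from Lemma~\ref{fundlem}: since $\pi$ is strong, $\pi|_\AA$ is injective, and $\pi_{\rm u}|_\AA = \pi|_\AA$ is therefore injective as well.

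For uniqueness, any strong quantum group morphism $\pi'_{\rm u}:\ADu \to (B_{\rm u},\D'_{\rm u})$ making the diagram commute automatically satisfies $\pi'_{\rm u}(\AA)\subseteq \BB$. Applying $\psi_B$ and using that $\psi_B|_\BB$ is the identity forces $\pi'_{\rm u}|_\AA = (\pi\circ \psi_A)|_\AA = \pi|_\AA = \pi_{\rm u}|_\AA$, so $\pi'_{\rm u}=\pi_{\rm u}$ by density of $\AA$ in $\au$ and continuity. The only real technical point is the routine passage from the algebraic identities on $\AA$ to their $C^*$-algebraic counterparts on $\au$; this is handled entirely by the universal property of $\au$ together with density of $\AA$, so no genuine obstacle arises.
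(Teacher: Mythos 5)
Your proposal is correct and follows essentially the same route as the paper's proof: extend $\pi|_\AA$ to $\au$ by the universal property, verify the comultiplication identity and the commuting square on the dense Hopf $*$-subalgebra $\AA$ and pass to $\au$ by continuity, and get uniqueness from the fact that any competitor must agree with $\pi$ on $\AA$. The only cosmetic difference is that you invoke Lemma~\ref{fundlem} explicitly for strongness where the paper simply observes that $\pi_{\rm u}$ is injective on $\AA$.
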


\begin{proof}
We have $\pi : \AA \to \BB \subseteq B_{\rm u}$. 
Hence, by the universal property of $\au$, 
we may uniquely extend this map to a unital homomorphism $\pi_{\rm u} : \au \to B_{\rm u}$. 

Let  $a \in \AA$. 
Then  we have 
\[
(\pi_{\rm u} \otimes \pi_{\rm u}) (\D_{\rm u}(a)) 
= (\pi \otimes \pi) (\D(a))  
= \D' (\pi(a)) = \D'_{\rm u} (\pi_{\rm u} (a)).
\]
By density of $\AA$ and continuity, we see that $\pi_{\rm u}$ is a quantum group morphism. 
As $\pi_{\rm u}$ agrees with $\pi$ on $\AA$, $\pi_{\rm u}$ is injective on $\AA$. 
Hence,  $\pi_{\rm u}$ is a strong quantum group morphism. 

Further, as $\psi_B \circ \pi_{\rm u} =  \pi = \pi\circ \psi_A$ clearly holds on $\AA$, we have $\psi_B \circ \pi_{\rm u} = \pi\circ \psi_A$ (again by density of $\AA$ and continuity). 
Finally,  if $\phi$ is a another morphism which satisfies $\psi_B \circ \phi= \pi\circ \psi_A$, then $\phi$ agrees with $\pi$ on $\AA$, so $\phi =\pi_{\rm u}$. 
\end{proof}

If now $\pi$ and $\pi'$ are two composable morphisms in $\CC$, it is straightforward to deduce from the uniqueness property  that $(\pi \circ \pi')_{\rm u} = \pi_{\rm u} \circ \pi_{\rm u}'$.  
Hence, we may define $U$ as follows. 

\begin{defnsub} The functor $U : \CC \to \UU$ takes each object $\AD$ in~$\CC$ to $\ADu$ in~$\UU$, 
and each morphism $\pi$ in~$\CC$ to the morphism $\pi_{\rm u}$ in~$\UU$. 
\end{defnsub}

\begin{propsub}\label{unipro} The functor $U$ is a right adjoint to the inclusion functor $\inc_\UU : \UU \to \CC$ and the counit $\psi$ of the adjunction $\inc_\UU \dashv U$ is given by $\psi_y = \psi_B$ for each $y=(B, \D')$ in $\CC$. 
In particular, $\UU$ is coreflective in $\CC$.
\end{propsub}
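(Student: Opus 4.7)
The plan is to mirror the proof of Proposition~\ref{redpro} in a dual fashion. For each object $y=(B,\D')$ in $\CC$, I set $\psi_y := \psi_B$, which is a morphism in $\CC$ from $U(y)=(B_{\rm u},\D'_{\rm u})$ to $y$ by the discussion preceding Lemma~\ref{unilem}. Lemma~\ref{unilem} asserts exactly that $\psi_B\circ\pi_{\rm u}=\pi\circ\psi_A$ for every morphism $\pi\colon(A,\D)\to(B,\D')$ in $\CC$; rewritten, this says $\psi_y\circ U\pi = \pi\circ\psi_x$, so $\psi$ is a natural transformation from $\inc_\UU\circ U$ to $1_\CC$.

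Next I verify that for each $y\in\obj\CC$, the pair $(U(y),\psi_y)$ is a universal morphism from $\UU$ to $y$; the adjunction $\inc_\UU\dashv U$ with counit $\psi$ then follows by the standard construction recalled in Section~\ref{prelim}. So let $x=(C,\D'')$ be an object of $\UU$ and $\pi\colon x\to y$ a morphism in $\CC$. Since $x$ is universal, $\psi_x\colon U(x)\to x$ is injective by definition, and it is also surjective because it restricts to the identity on the dense Hopf $*$-subalgebra associated to $x$ and the image of a $C^*$-algebra homomorphism is closed. Hence $\psi_x$ is a bijective morphism in $\CC$, and one checks routinely that its set-theoretic inverse intertwines the comultiplications and respects the Haar states, so $\psi_x^{-1}$ is again a morphism in $\CC$.

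The candidate factorization is then $\rho := U\pi\circ\psi_x^{-1}\colon x\to U(y)$. From naturality, $\psi_y\circ U\pi=\pi\circ\psi_x$, so $\psi_y\circ\rho=\pi$; and $\rho$ lies in $\UU$ because $\UU$ is a full subcategory of $\CC$. For uniqueness, if $\rho'\colon x\to U(y)$ in $\UU$ also satisfies $\psi_y\circ\rho'=\pi$, then $\psi_y\circ(\rho'\circ\psi_x)=\pi\circ\psi_x=\psi_y\circ U\pi$, and the uniqueness clause of Lemma~\ref{unilem} forces $\rho'\circ\psi_x=U\pi$, whence $\rho'=\rho$ by invertibility of $\psi_x$. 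This establishes the universal property, so $\inc_\UU\dashv U$ with counit $\psi$, and $\UU$ is coreflective in $\CC$.

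I expect the only mildly delicate step to be confirming that $\psi_x$ is an actual isomorphism (not merely injective) when $x\in\UU$; everything else is an essentially direct dualization of the argument for Proposition~\ref{redpro}, powered by the universal extension property encoded in Lemma~\ref{unilem}.
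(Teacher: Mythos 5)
Your proposal is correct and follows essentially the same route as the paper: both establish naturality of $\psi$ from Lemma~\ref{unilem}, and both verify the universal property by observing that $\psi_x$ is an isomorphism when $x$ is universal (injective by definition, surjective by construction) and taking the factorization $U\pi\circ\psi_x^{-1}$. Your uniqueness argument via the uniqueness clause of Lemma~\ref{unilem} just makes explicit what the paper leaves as ``clearly the unique morphism.''
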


\begin{proof} For each compact quantum group $y=(B, \D')$, let $\psi_y $ be the morphism in $\CC$ defined by $\psi_y = \psi_B$. Then Lemma \ref{unilem} implies that the map $\psi$ which sends each $y$ to $\psi_y$ is a natural transformation from $\inc_\UU \circ\, U$ to $1_\CC$. 

Lemma \ref{unilem} also implies that each $\psi_y$ is a universal morphism from $\UU $ to $y$ for each object $y=(B, \D')$ in $\CC$.
Indeed,  consider a universal compact quantum group $x=\AD$ and a morphism $\pi : x \to y$ in~$\CC$. Then $\psi_A$ is an isomorphism and $\pi' = \pi_{\rm u} \circ \psi_A^{-1} : \AD \to (B_{\rm u}, \D_{\rm u}')$ is clearly the unique morphism in $\UU(x, Uy)$ such that $\pi = \psi_y \circ \pi'$.
\end{proof}

\subsubsection*{Equivalence of $\RR$ and $\UU$}

It  follows from Propositions \ref{redpro} and \ref{unipro}
that $R|_\UU \dashv U|_\RR$ is an adjunction from $\UU$ to $\RR$. 
To see that this is an adjoint equivalence, we will use the following:

\begin{propsub} \label{fundpro} Let $\AD$ be a compact quantum group. Then: 
\begin{enumerate}
\item  $U\t_A= (\t_A)_{\rm u} $ is an isomorphism in $\UU$;
\item $R\psi_A = (\psi_A)_{\rm r} $ is an isomorphism in $\RR$.
\end{enumerate}
\end{propsub}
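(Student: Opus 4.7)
The plan is to handle the two parts using the structural descriptions of $R$ and $U$ already developed, in particular the fact that both functors fix the associated Hopf $*$-algebra up to a canonical identification.

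For (i), I would argue as follows. By the paragraph preceding Definition of $\RR$, the restriction $\tau_A|_{\AA}:\AA\to A_{\rm r}$ is injective and its image $\tau_A(\AA)$ is the Hopf $*$-algebra associated with $(A_{\rm r},\Delta_{\rm r})$. Thus $\tau_A|_{\AA}$ is a $*$-isomorphism of Hopf $*$-algebras. Now $(\tau_A)_{\rm u}:A_{\rm u}\to (A_{\rm r})_{\rm u}$ is the unique unital homomorphism extending $\tau_A:\AA\to \AA\subseteq (A_{\rm r})_{\rm u}$ (via the identification $\AA\cong\tau_A(\AA)$). Since $A_{\rm u}$ is the $C^*$-completion of $\AA$ in the universal $C^*$-norm $\|\cdot\|_{\rm u}$ on $\AA$, and $(A_{\rm r})_{\rm u}$ is the $C^*$-completion of $\tau_A(\AA)$ in its universal $C^*$-norm, the $*$-isomorphism $\tau_A|_\AA$ transports one universal $C^*$-norm to the other. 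Hence $(\tau_A)_{\rm u}$, being the continuous extension of an isometric bijection between dense $*$-subalgebras, is a $*$-isomorphism, and since it is by construction $\Delta_{\rm u}$--$(\Delta_{\rm r})_{\rm u}$ equivariant, it is an isomorphism in~$\UU$.

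For (ii), I would use bijectivity of $(\psi_A)_{\rm r}:(A_{\rm u})_{\rm r}\to A_{\rm r}$. First, $\psi_A:A_{\rm u}\to A$ is surjective: its image is a $C^*$-subalgebra containing the dense $*$-subalgebra $\AA$, hence equals $A$. Combined with surjectivity of $\tau_A$, the commuting square defining $(\psi_A)_{\rm r}$ forces $(\psi_A)_{\rm r}$ to be surjective. For injectivity, recall that the Haar state of $(A_{\rm u},\Delta_{\rm u})$ is $h_{A_{\rm u}}=h_A\circ \psi_A$. Therefore, for $a\in A_{\rm u}$,
\[
a\in N_{A_{\rm u}} \iff h_A\bigl(\psi_A(a)^*\psi_A(a)\bigr)=0 \iff \psi_A(a)\in N_A.
\]
Thus $\ker(\tau_A\circ\psi_A)=\psi_A^{-1}(N_A)=N_{A_{\rm u}}=\ker\tau_{A_{\rm u}}$. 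Since $(\psi_A)_{\rm r}\circ\tau_{A_{\rm u}}=\tau_A\circ\psi_A$ and $\tau_{A_{\rm u}}$ is surjective, this equality of kernels yields the injectivity of $(\psi_A)_{\rm r}$. A bijective $*$-homomorphism between $C^*$-algebras is automatically an isometric isomorphism; together with its automatic equivariance with respect to the reduced comultiplications, this makes $(\psi_A)_{\rm r}$ an isomorphism in~$\RR$.

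No serious obstacle is anticipated: the argument relies only on the facts that $\tau_A$ is injective on $\AA$ with image the Hopf $*$-algebra of the reduction, that $\psi_A$ restricts to the identity on $\AA$ and is surjective, and the identification $h_{A_{\rm u}}=h_A\circ\psi_A$. The most delicate step is the verification in (i) that the universal $C^*$-norm on $\AA$ agrees, under $\tau_A|_\AA$, with the universal $C^*$-norm on $\tau_A(\AA)$, but this is immediate from the defining universal property of $\|\cdot\|_{\rm u}$ since $\tau_A|_\AA$ is a $*$-isomorphism of unital $*$-algebras.
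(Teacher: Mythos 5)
Your proposal is correct and follows essentially the same route as the paper: part (i) rests, as in the paper, on the injectivity of $\t_A$ on $\AA$ together with the universal property of $\au$ (the paper builds the inverse of $(\t_A)_{\rm u}$ explicitly from the map $\t_A(a)\mapsto a$, which is just another phrasing of your observation that $\t_A|_{\AA}$ matches the two universal $C^*$-norms), and part (ii) proves bijectivity of $(\psi_A)_{\rm r}$ by exactly the paper's kernel computation $\ker(\t_A\circ\psi_A)=N_{\au}=\ker(\t_{\au})$ via $h_{\au}=h_A\circ\psi_A$.
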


\begin{proof} (i)~Lemma \ref{unilem}, applied to $\t_A$, gives that  $$(\t_A)_{\rm u} : \ADu \to \big((A_{\rm r})_{\rm u}, (\D_{\rm r})_{\rm u}\big)$$ is a morphism in $\UU$ satisfying $\psi_{A_{\rm r}} \circ (\t_A)_{\rm u}  = \t_A \circ \psi_A$. 

Since $\t_A$ is injective on $\AA$, the map $\t_A(a) \mapsto a \in \AA \, \subseteq \au$ gives a 
well-defined homomorphism from $\t_A(\AA)$ to $\au$. Hence, by universality, it extends to
a homomorphism from $(A_{\rm r})_{\rm u} $ to $\au$, which is easily seen to be a morphism in $\UU$ and the inverse of $(\t_A)_{\rm u}$.
 
(ii)~Lemma \ref{redulem}, applied to $\psi_A$, gives that  $$(\psi_A)_{\rm r} : \big((\au)_{\rm r} , (\D_{\rm u})_{\rm r}\big) \to (A_{\rm r},\D_{\rm r})$$ is a morphism in $\RR$ satisfying $(\psi_{A})_{\rm r} \circ \t_{A_{\rm u}}  = \t_A \circ \psi_A$. 
As $(\psi_{A})_{\rm r} \circ \t_{A_{\rm u}}  = \t_A \circ \psi_A$ is surjective (because $\t_A $ and $ \psi_A$ are both surjective by construction), it is clear that $(\psi_{A})_{\rm r}$  is surjective. 
 
Moreover, as $h_{\rm r} \circ \t_A \circ \psi_A$ is the Haar state of $\ADu$ and $h_r$ is faithful, we have $\ker(\t_A \circ \psi_A) = N_{\au} = \ker(\t_{\au})$. Since $(\psi_{A})_{\rm r} \circ \t_{A_{\rm u}}  = \t_A \circ \psi_A$, it readily follows that $(\psi_{A})_{\rm r} $ is injective. 
 
Hence, $(\psi_{A})_{\rm r} $ is a bijection. But any quantum group morphism which is a bijection is easily seen to be an isomorphism in $\CC$. So $(\psi_{A})_{\rm r} $ is an isomorphism in~$\RR$.
\end{proof}

\begin{thmsub} \label{fundthmcqg}
The adjunction $R|_\UU \dashv U|_\RR$ is an adjoint equivalence.
In particular, the categories $\RR$ and $\UU$ are equivalent. 
\end{thmsub}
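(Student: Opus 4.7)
The plan is to deduce this directly from the general reflective-coreflective machinery developed in Section~\ref{equivalence}, together with Proposition~\ref{fundpro}. The setup is already in place: by Proposition~\ref{redpro}, $\RR$ is a full reflective subcategory of $\CC$ with reflector $R$ and unit $\t$; by Proposition~\ref{unipro}, $\UU$ is a full coreflective subcategory of $\CC$ with coreflector $U$ and counit $\psi$. Because both subcategories are full, $\inc_\UU$ and $\inc_\RR$ are full and faithful, so we are precisely in the situation to which \thmref{main} and Corollary~\ref{main equivalence} apply, with the correspondences $\MM\leftrightarrow\UU$, $\NN\leftrightarrow\RR$, $M\leftrightarrow U$, and $N\leftrightarrow R$.

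By Corollary~\ref{main equivalence}, to prove that $R|_\UU\dashv U|_\RR$ is an adjoint equivalence it suffices to verify Properties~\ref{two-way}~(F) and~(I). I would invoke the ``(iv)'' and ``(viii)'' reformulations from \thmref{main}: (F) is equivalent to $U\t_x$ being an isomorphism in $\UU$ for each $x\in\obj\UU$, and (I) is equivalent to $R\psi_y$ being an isomorphism in $\RR$ for each $y\in\obj\RR$. This is where the category-theoretic scaffolding pays off, reducing the entire equivalence to two concrete isomorphism statements.

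Both conditions now follow immediately from Proposition~\ref{fundpro}, which in fact gives the stronger assertion that $U\t_A=(\t_A)_{\rm u}$ and $R\psi_A=(\psi_A)_{\rm r}$ are isomorphisms for \emph{every} compact quantum group $\AD$ in $\CC$, not only for those in $\UU$ or $\RR$. In particular these isomorphisms hold when $\AD\in\obj\UU$ and when $\AD\in\obj\RR$, so Properties~(F) and~(I) are satisfied. Applying Corollary~\ref{main equivalence} then yields that $R|_\UU\dashv U|_\RR$ is an adjoint equivalence, and consequently the categories $\RR$ and $\UU$ are equivalent.

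There is essentially no obstacle remaining at this stage: all the substantive work has already been absorbed into the general theorems and into Proposition~\ref{fundpro}. The only mild point to be careful about is matching up the directions: $\t$ is a unit (so $U\t_x$ is a morphism in $\UU$) and $\psi$ is a counit (so $R\psi_y$ is a morphism in $\RR$), and one should check that this matches the conventions in \thmref{main}~(iv),(viii) — which it does, since $\UU$ plays the role of $\MM$ (coreflective) and $\RR$ the role of $\NN$ (reflective). Once that is observed, the proof reduces to a one-line invocation of Proposition~\ref{fundpro} and Corollary~\ref{main equivalence}.
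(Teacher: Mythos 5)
Your proposal is correct and matches the paper's own proof: both reduce the statement to conditions (iv) and (viii) of Theorem~\ref{main} (equivalently, Properties~\ref{two-way}~(F) and~(I) via Corollary~\ref{main equivalence}) and then observe that Proposition~\ref{fundpro} supplies the required isomorphisms $U\t_x$ and $R\psi_y$, indeed for all objects of $\CC$ and hence in particular for those of $\UU$ and $\RR$. Your check that $\UU$ plays the role of $\MM$ and $\RR$ that of $\NN$ is exactly the bookkeeping the paper relies on.
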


\begin{proof} 
Proposition \ref{fundpro} (i) (respectively
(ii)) implies that the adjunction $R|_\UU \dashv U|_\RR$ satisfies condition (iv) (respectively~(viii)) in Theorem \ref{main}. As  $\UU$ (respectively $\RR$) is full, Theorem \ref{main} gives that the unit (respectively the counit) of this adjunction is a natural isomorphism. Hence, the assertion follows.
\end{proof}

\begin{remsub} Proposition \ref{fundpro} may be reformulated by saying that 
$(Rx, \theta_x\circ\psi_x)$ is an initial object  in $Ux\dn \RR$ and $(Ux, \theta_x\circ\psi_x)$ is a final object in $\UU \dn Rx$ for each $x\in\obj\CC$ (\cf~
Theorem \ref{hyp-thm}), which means that Hypothesis~\ref{factor initial} and its dual analog are satisfied. Being surjective by construction,  $\psi_x$ is an epimorphism in $\CC$ for each $x \in \obj\CC$, so we can conclude from Corollary~\ref{faithful} that $U$ and $R$ are faithful.
\end{remsub}

\subsection{Other examples}
\label{other}

Here we describe two other examples of the maximal-normal equivalence, in which the subcategories are not only equivalent but in fact isomorphic. 
These concern tensor products and group representations, and it should be clear that one can readily construct an abundance of such examples.

\subsubsection*{Tensor products}
\label{tensor}

We show that the categories of maximal and minimal $C^*$-tensor products are equivalent, indeed isomorphic.%
\footnote{Of course, the tensor-product $C^*$-algebras themselves will usually not be isomorphic!}
More precisely, we show that, for a fixed $C^*$-algebra $D$, the categories of maximal tensor products $A\otimes_{\max} D$ and minimal tensor products $A\otimes_{\min} D$ are isomorphic.
We thank Chris Phillips for this suggestion.

We could easily have done everything with both variables free, \ie, allowing $D$ to vary as well as $A$, but we merely wanted to present examples, and the result we establish is more readily compared with the maximal-normal equivalence for coactions.
To see the relation, let $G$ be a locally compact group, and take $D=C^*(G)$. For any $C^*$-algebra $A$, let $\iota$ be the trivial action of $G$. Then the full and reduced crossed products are
\[
A\rtimes_\iota G=A\otimes_{\max} C^*(G)
\midtext{and}
A\rtimes_{\iota,r} G=A\otimes_{\min} C^*_r(G),
\]
and in each case the dual coaction is trivial.
The maximal-normal equivalence relates the maximal coaction $(A\rtimes_\iota G,\what\iota)$ to its normalization $(A\rtimes_{\iota,r} G,\what\iota^n)$, \ie, the maximal tensor product $A\otimes_{\max} C^*(G)$ to the minimal one $A\otimes_{\min} C^*_r(G)$, both with the trivial coaction.
The ``maximal-normal isomorphism'' we exhibit here relates only the $C^*$-algebras
$A\otimes_{\max} C^*(G)$ and $A\otimes_{\min} C^*(G)$ (not $A\otimes_{\min} C^*_r(G)$); thus the comparison is not perfect (and so even in with $D=C^*(G)$ the results we present here are not a special case of the maximal-normal equivalence for coactions), but clearly there is a strong similarity between the two types of equivalence.

Fix a $C^*$-algebra $D$. Our ambient category $\CC$
will comprise $C^*$-tensor products with $D$. More precisely, the objects in 
$\CC$ are pairs $(A,\sigma)$, where
$A$ is a $C^*$-algebra and $\sigma$ is a $C^*$-norm on the algebraic tensor product $A\odot D$;
and a morphism $\pi:(A,\sigma)\to (B,\tau)$ in $\CC$ is a $C^*$-homomorphism $\pi:A\to B$ such that
the homomorphism
\[
\pi\odot\id:A\odot D\to B\odot D
\]
between the algebraic tensor products is $\sigma-\tau$ bounded.

Thus, 
for any object $(A,\sigma)$ in $\CC$,
$\id_A:(A,\max)\to (A,\sigma)$ and $\id_A:(A,\sigma)\to (A,\min)$ 
are morphisms in $\CC$, where $\max$ and $\min$ denote the maximal and minimal $C^*$-norms, respectively.
Also, any $C^*$-homomorphism $\pi:A\to B$ gives two morphisms $\pi:(A,\max)\to (B,\max)$ and $\pi:(A,\min)\to (B,\min)$ in $\CC$.

A moment's thought reveals that $\CC$ really is a category: the identity morphism on an object $(A,\sigma)$ is $\id_A$, and the composition of morphisms $\pi:(A,\sigma)\to (B,\tau)$ and $\phi:(B,\tau)\to (C,\gamma)$ is $\phi\circ\pi:(A,\sigma)\to (C,\gamma)$.

Our subcategories $\MM$ and $\NN$ will comprise the maximal and minimal tensor products, respectively.
That is, $\MM$ is the full subcategory of~$\CC$
with objects of the form $(A,\max)$, 
and $\NN$ is the full subcategory 
with objects of the form $(A,\min)$.
The following proposition is almost trivial.

\begin{propsub}
The subcategories $\MM$ and $\NN$ of $\CC$ are coreflective and reflective, respectively.
\end{propsub}

\begin{proof}
To show that $\MM$ is coreflective, we must construct a right adjoint $M$ of the inclusion functor $\inc_\MM:\MM\to\CC$.
It suffices to find, for each object $(A,\sigma)$ in $\CC$, a universal morphism 
$(M(A,\sigma),\psi_{(A,\sigma)})$
from $\MM$ to $(A,\sigma)$, because there would then be a unique way to extend $M$ to a right adjoint such that $\psi:\inc_\MM\circ M:\to 1_\CC$ 
is a natural transformation.
So, let $(B,\max)$ be an object in $\MM$, and let
$
\pi:(B,\max)\to (A,\sigma)
$
be a morphism. 

Then obviously $\pi:(B,\max)\to (A,\max)$ is the unique morphism making the diagram
\[
\xymatrix{
(B,\max) \ar[dr]^\pi \ar@{-->}[d]_\pi^{!}
\\
(A,\max) \ar[r]_{\id_A}
&(A,\sigma)
}
\]
commute, so we can take
\[
M(A,\sigma)=(A,\max)\midtext{and}
\psi_{(A,\sigma)}=\id_A
\]
It is just as easy to construct a left adjoint $N$ for the inclusion functor $\inc_\NN:\NN\to\CC$.
\end{proof}

Note that the adjunctions $\inc_\MM\dashv M$ and $N\dashv \inc_\NN$ implicitly constructed in the above proof are given by
\begin{align*}
M(A,\sigma)&=(A,\max)&M\pi&=\pi
\\
N(A,\sigma)&=(A,\min)&N\pi&=\pi,
\end{align*}
where $(A,\sigma)$ and $\pi$ are an object and a morphism, respectively, of $\CC$.
Moreover, the counit of $\inc_\MM\dashv M$ and the unit of $N\dashv \inc_\NN$ are both given by identity maps:
\begin{align*}
\psi_{(A,\sigma)}&=\id_A:(A,\max)\to (A,\sigma)
\\
\theta_{(A,\sigma)}&=\id_A:(A,\sigma)\to (A,\min)
\end{align*}
We could now apply the results of Sections~\ref{equivalence} and \ref{boilerplate},
after verifying 
the relevant hypotheses
therein, but in this context all this reduces to almost a triviality, and in fact the restriction $N|_\MM$ is not only an equivalence, but in fact an isomorphism of subcategories:
%
%
$N|_\MM$ and $M|_\NN$ are easily seen to be inverses of each other.

\begin{remsub}
The components of both the counit 
$\psi:\inc_\MM\circ M\to 1_\CC$ and $\theta:1_\CC\to \inc_\NN\circ N$ 
are both monomorphisms and epimorphisms, since these components reduce to $\id_A$ for each object $(A,\sigma)$.
Of course, in spite of all this we must still keep in mind that neither the counit nor the unit is an isomorphism.
\end{remsub}

\subsubsection*{Group representations}
\label{groups}

Another example of the ``maximal-normal isomorphism'' is given by group representations weakly containing the trivial representation.
More precisely, this time our ambient category $\CC$ will have
\begin{itemize}
\item objects: triples $(G,u,A)$, where $G$ is a locally compact group, $A$ is a $C^*$-algebra, and $u:G\to M(A)$ is a strictly continuous unitary homomorphism that weakly contains the trivial representation $1_G:G\to\C$ (given by $1_G(s)=1$ for all $s\in G$), and for which the associated morphism $\pi_u:C^*(G)\to A$ in \cs\ maps $C^*(G)$ onto $A$;

\item morphisms: $(\phi,\pi):(G,u,A)\to (H,v,B)$ in $\CC$ means that $\phi:G\to H$ is a continuous homomorphism, $\pi:A\to B$ is a morphism in \cs\, and the diagram
\[
\xymatrix{
G \ar[r]^-u \ar[d]_\phi
&M(A) \ar[d]_{\bar\pi}
&A \ar@{_(->}[l] \ar[dl]^\pi
\\
H \ar[r]_-v
&M(B)
}
\]
commutes.
\end{itemize}

Thus, for each object $(G,u,A)$ in $\CC$, the weak containment hypothesis means that there is a morphism $\gamma_u$ in \cs\ making the diagram
\[
\xymatrix{
G \ar[r]^-u \ar[dr]_{1_G}
&M(A) \ar[d]_{\bar{\gamma_u}}
&A \ar[dl]^{\gamma_u} \ar@{_(->}[l]
\\
&\C
}
\]
commute.
It is routine to check that this is a category.

This time, our full subcategories $\MM$ and $\NN$ will have objects of the form $(G,i_G,C^*(G))$ and $(G,1_G,\C)$, respectively, where $i_G:G\to M(C^*(G))$ is the canonical inclusion.

\begin{propsub}
The subcategories $\MM$ and $\NN$ of $\CC$ are coreflective and reflective, respectively.
\end{propsub}

\begin{proof}
As usual, it suffices to find, for each object $(G,u,A)$ of $\CC$, a universal morphism $(M(G,u,A),\psi_{(G,u,A)})$ from $\MM$ to $(G,u,A)$, and a universal morphism $(N(G,u,A),\theta_{(G,u,A)})$ from $(G,u,A)$ to $\NN$. Note that we have a commutative diagram
\[
\xymatrix{
&M(C^*(G))
\ar[d]_{\bar{\pi_u}}
&C^*(G)
\ar[dl]^{\pi_u}
\ar@{_(->}[l]
\\
G \ar[ur]^{i_G} \ar[r]^-u \ar[dr]_{1_G}
&M(A) \ar[d]_{\bar{\gamma_u}}
&A
\ar[dl]^{\gamma_u}
\ar@{_(->}[l]
\\
&\C.
}
\]
\textbf{Claim:} it follows that we can take
\begin{align*}
M(G,u,A)&=(G,i_G,C^*(G))\\
\psi_{(G,u,A)}&=(\id_G,\pi_u)\\
N(G,u,A)&=(G,1_G,\C)\\
\theta_{(G,u,A)}&=(\id_G,\gamma_u).
\end{align*}
To verify the claim, first we show that
\[
(\id_G,\pi_u):(G,i_G,C^*(G))\to (G,u,A)
\]
is final in the comma category $\MM\dn (G,u,A)$:

Given an object $(H,i_H,C^*(H))$ in $\MM$ and a morphism
$(\phi,\omega):(H,i_H,C^*(H))\to (G,u,A)$, we must show that the diagram 
\[
\xymatrix{
(H,i_H,C^*(H)) \ar[dr]^{(\phi,\omega)} \ar@{-->}[d]_{(\sigma,\tau)}^{!}
\\
(G,i_G,C^*(G)) \ar[r]_-{(\id_G,\pi_u)}
&(G,u,A)
}
\]
can be uniquely completed.
We will show that we can take
\[
(\sigma,\tau)=(\phi,C^*(\phi)),
\]
where $C^*(\phi):C^*(H)\to C^*(G)$ is the morphism in \cs\ corresponding to the continuous homomorphism $\phi:H\to G$.
First of all, note that $(\phi,C^*(\phi)):(H,i_H,C^*(H))\to (G,i_G,C^*(G))$ is a morphism in $\CC$ (in fact, in $\MM$, since $\MM$ is full and both objects are in $\MM$), by the universal property of group $C^*$-algebras.
Of course
$
\phi=\id_G\circ\phi,
$
so it remains to show that
\[
\omega=\pi_u\circ C^*(\phi)
\]
in \cs.
This time, because we are ``mixing categories'', we take some care with the ``barring'' of nondegenerate homomorphisms into multiplier algebras (see \cite[Appendix~A]{boiler}).
So, we must show that
\[
\bar{\pi_u}\circ C^*(\phi)=\omega:C^*(H)\to M(A).
\]
Since all the above homomorphisms are nondegenerate, it suffices to show that
\[
\bar{\bar{\pi_u}\circ C^*(\phi)}=\bar\omega:M(C^*(H))\to M(A).
\]
Furthermore, by the universal property of group $C^*$-algebras it suffices to show that the above equation holds after pre-composing both sides with $i_H:H\to M(C^*(H))$:
\begin{align*}
\bar{\bar{\pi_u}\circ C^*(\phi)}\circ i_H
&=\bar{\pi_u}\circ \bar{C^*(\phi)}\circ i_H
&&\text{(properties of barring)}
\\&=\bar{\pi_u}\circ i_G\circ \phi
&&\text{($(\phi,C^*(\phi))$ is a morphism)}
\\&=u\circ \phi
&&\text{(universal property of $\pi_u$)}
\\&=\bar\omega\circ i_H
&&\text{($(\phi,\omega)$ is a morphism)}.
\end{align*}

To finish, we need to verify that
\[
(\id_G,\gamma_u):(G,u,A)\to (G,1_G,\C)
\]
is initial in the comma category $(G,u,A)\dn \NN$:
given an object $(H,1_H,\C)$ in $\NN$ and a morphism
$(\phi,\omega):(G,u,A)\to (H,1_H,\C)$,
we must show that the diagram
\[
\xymatrix{
(G,u,A) 
\ar[r]^-{(\id_G,\gamma_u)}
\ar[dr]_{(\phi,\omega)}
&(G,1_G,\C) 
\ar@{-->}[d]^{(\sigma,\tau)}_{!}
\\
&(H,1_H,\C)
}
\]
can be uniquely completed.
We will show that we can take 
$(\sigma,\tau)=(\phi,\id_\C)$.
Again, the only nontrivial thing to show is
$\omega=\id_\C\circ \gamma_u=\gamma_u$. 
Note that $\gamma_u:A\to\C$ is the unique homomorphism such that
$\bar{\gamma_u}\circ u=1_G$. 
Thus the following computation finishes the proof:
\[
\bar\omega\circ u
=1_H\circ \phi
=1_G.
\qedhere
\]
\end{proof}

\begin{propsub}
With the above notation, the
restriction $N|_\MM:\MM\to\NN$
is an isomorphism of categories.
\end{propsub}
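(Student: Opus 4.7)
The plan is to construct an explicit strict inverse $F\colon\NN\to\MM$ of $N|_\MM$; equivalently, to verify that $N|_\MM$ is bijective on objects and on each hom-set. First I would observe that $N|_\MM$ is bijective on objects: the objects of $\MM$ and $\NN$ are both parametrized by locally compact groups $G$ (via $G\mapsto(G,i_G,C^*(G))$ and $G\mapsto(G,1_G,\C)$ respectively), and $N|_\MM$ acts as the identity on this parameter by the preceding proposition.

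Next I would unwind the hom-sets. A morphism $(\phi,\pi)\colon(G,i_G,C^*(G))\to(H,i_H,C^*(H))$ in $\MM$ must satisfy $\bar\pi\circ i_G=i_H\circ\phi$, and the universal property of group $C^*$-algebras forces $\pi=C^*(\phi)$, the integrated form of $i_H\circ\phi$. Hence morphisms of $\MM$ are parametrized by continuous homomorphisms via $(\phi,C^*(\phi))\leftrightarrow\phi$. Dually, a morphism $(\phi,\pi)\colon(G,1_G,\C)\to(H,1_H,\C)$ in $\NN$ requires $\pi\colon\C\to M(\C)=\C$ to be nondegenerate, forcing $\pi=\id_\C$, so $\NN$-morphisms are likewise parametrized by continuous homomorphisms via $(\phi,\id_\C)\leftrightarrow\phi$.

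Then I would compute $N|_\MM$ on morphisms using the naturality of the unit $\theta$ of $N\dashv\inc_\NN$. Since $\theta_{(G,i_G,C^*(G))}=(\id_G,\gamma_{i_G})$ and similarly at $H$, the unique fill-in $(\sigma,\tau)\colon(G,1_G,\C)\to(H,1_H,\C)$ making the naturality square commute must have $\tau=\id_\C$ and $\sigma=\phi$; the scalar compatibility $\gamma_{i_G}=\gamma_{i_H}\circ C^*(\phi)$ then holds automatically because both sides are the integrated form of $1_G$. Thus $N|_\MM(\phi,C^*(\phi))=(\phi,\id_\C)$, which identifies the two parameterizations above. Defining $F$ on objects by $(G,1_G,\C)\mapsto(G,i_G,C^*(G))$ and on morphisms by $(\phi,\id_\C)\mapsto(\phi,C^*(\phi))$ therefore gives a strict two-sided inverse of $N|_\MM$; functoriality of $F$ is immediate from $C^*(\psi\circ\phi)=C^*(\psi)\circ C^*(\phi)$ and $C^*(\id_G)=\id_{C^*(G)}$.

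The only nontrivial ingredient is really the identification $\pi=C^*(\phi)$ on $\MM$-morphisms, which is an immediate consequence of the universal property of group $C^*$-algebras, so there is no genuine obstacle.
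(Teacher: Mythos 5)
Your argument is correct and is essentially the paper's proof written out in full: the paper simply asserts that $N|_\MM$ and $M|_\NN$ ``are easily seen to be inverses of each other,'' and your explicit functor $F$ (sending $(G,1_G,\C)\mapsto(G,i_G,C^*(G))$ and $(\phi,\id_\C)\mapsto(\phi,C^*(\phi))$) is precisely $M|_\NN$ as constructed in the preceding proposition. The details you supply --- that $\MM$- and $\NN$-morphisms are both parametrized by continuous group homomorphisms via the universal property of $C^*(G)$ and the rigidity of nondegenerate maps $\C\to\C$ --- are exactly what makes the paper's one-line claim ``easy to see.''
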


\begin{proof}
Again 
$N|_\MM$ and $M|_\NN$ are easily seen to be inverses of each other. 
\end{proof}

\appendix

\section{}

In this appendix we take the opportunity to reinterpret much of the existing theory of coaction crossed products
in the present, more categorical, context.

To begin, we explicitly record a few properties of the category \cs. 
Since \coact\ is obtained from \cs\ by adding extra structure, some of the following observations will be relevant for \coact\ as well.

A morphism $\phi:A\to B$ in~\cs\ is a monomorphism if and only it is injective.
Thus, monomorphicity is completely determined by the kernel. What about epimorphicity? One direction is elementary:
If $\phi:A\to B$ in \cs\ and $\phi$ is surjective (\ie, $\phi(A)=B$), then $\phi$ is an epimorphism.
Of course, the converse is false for general morphisms in \cs. 
For example, if $\phi(A)$ properly contains $A$ then $\phi$ is an epimorphism in~\cs. There is a positive result, which does not seem to have become a standard tool among operator algebraists:

\begin{lem}
[{\cite{epi}}]
Suppose $\phi:A\to B$ is a homomorphism --- so we are requiring $\phi$ to map $A$ into $B$ itself rather than merely $M(B)$.   
Then $\phi$ is an epimorphism in~\cs\ if and only if it is surjective.
\end{lem}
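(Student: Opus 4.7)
The ``if'' direction is the elementary observation recorded in the paragraph immediately preceding the lemma, so only the ``only if'' direction requires work. The plan is to argue by contraposition: assume $\phi\colon A\to B$ is a homomorphism (with $\phi(A)\subseteq B$) that is \emph{not} surjective, and produce two distinct morphisms $\psi_1,\psi_2\colon B\to D$ in \cs\ satisfying $\psi_1\circ\phi=\psi_2\circ\phi$, thereby witnessing that $\phi$ is not an epimorphism.

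Set $C=\phi(A)$. Since $\phi$ is a $*$-homomorphism between $C^*$-algebras, $C$ is a $C^*$-subalgebra of $B$; by hypothesis $C\neq B$. The natural device for producing two maps out of $B$ that agree on $C$ is the amalgamated free product: let $D=B*_C B$, and let $j_1,j_2\colon B\to D$ be the two canonical embeddings. By construction $j_1|_C=j_2|_C$, so $j_1\circ\phi=j_2\circ\phi$; it then suffices to see that $j_1\neq j_2$, i.e.\ that there exists some $b\in B$ with $j_1(b)\neq j_2(b)$. This is where the nontriviality lies: I would invoke the Blackadar-type theorem guaranteeing that the canonical maps into a reduced/full amalgamated free product are injective and that $j_1(B)\cap j_2(B)=j_1(C)=j_2(C)$ inside $D$. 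Granting this, any $b\in B\setminus C$ yields $j_1(b)\neq j_2(b)$.

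To fit the construction into \cs\ one has to confirm that $j_1$ and $j_2$ are \emph{nondegenerate} morphisms into~$D$ (not merely into some multiplier algebra). The cleanest route is to pass to unitizations: form the unitized amalgamated free product $B^+ *_{C^+} B^+$, where $C^+$ denotes the unitization of $C$ sharing the common unit with $B^+$, and then restrict to the canonical unital $*$-homomorphisms $j_i\colon B^+\to B^+*_{C^+}B^+$. These are automatically nondegenerate when restricted to $B$ (they carry an approximate unit of $B$ to an approximate unit of the $C^*$-subalgebra they generate, and the target can be replaced by the nondegenerate hereditary $C^*$-subalgebra this generates inside $D$, if needed). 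The relation $j_1\circ\phi=j_2\circ\phi$ still holds, while the Blackadar injectivity argument still produces an element on which $j_1$ and $j_2$ disagree.

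The main obstacle, therefore, is really just the existence and good behaviour of the amalgamated free product $B*_C B$: one needs both the existential result (this $C^*$-algebra exists because $C$ admits faithful nondegenerate representations that extend to $B$) and the structural result ($j_1,j_2$ are injective and intersect in $C$). With these classical facts in hand, the contradiction with $\phi$ being an epimorphism in \cs\ is immediate, completing the proof.
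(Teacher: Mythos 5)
The paper offers no proof of this lemma at all --- it simply cites Hofmann and Neeb \cite{epi} --- so your argument has to stand entirely on its own, and it does not. Your categorical framing is the right one: $B*_CB$ is the pushout of the inclusion $C=\phi(A)\hookrightarrow B$ along itself, so that inclusion is an epimorphism precisely when $j_1=j_2$, and disproving epimorphicity amounts to showing $j_1\neq j_2$. But this observation also exposes the gap. The ``Blackadar-type theorem'' you invoke gives only that each $j_i$ is individually injective, i.e.\ that $B$ embeds into the full amalgamated free product; it says nothing about how the two copies $j_1(B)$ and $j_2(B)$ sit relative to one another. The assertion that $j_1(b)\neq j_2(b)$ for $b\in B\setminus C$ (equivalently, that $j_1\neq j_2$ whenever $C\subsetneq B$) is, by the universal property of the pushout, \emph{exactly} the assertion that a proper inclusion of $C^*$-algebras is never an epimorphism. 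To establish it one must produce two representations of $B$ on a common Hilbert space that agree on $C$ but not on $B$ --- which is the entire content of the theorem. So the step you defer to the free-product literature is not a citable off-the-shelf fact; it is the theorem itself in disguise, and the argument is circular.

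What is actually needed is a direct construction of the two maps, essentially as in \cite{epi}: pass to the universal representation $\pi$ of $B$. If $\pi(C)'\neq\pi(B)'$, pick a projection $p\in\pi(C)'\setminus\pi(B)'$ and set $u=2p-1$; then $\pi$ and $(\ad u)\circ\pi$ agree on $C$ but differ on $B$, so the inclusion is not an epimorphism. If instead $\pi(C)'=\pi(B)'$, the bicommutant theorem gives that $B$ lies in the weak closure of $\pi(C)$, and since a norm-closed subspace of $B$ is $\sigma(B,B^*)$-closed (Hahn--Banach), this forces $C=B$. Your secondary worry about landing in \cs\ rather than in the plain category of $*$-homomorphisms is legitimate but minor, and your unitization fix is not the cleanest repair: once one has two nondegenerate representations $\pi_1,\pi_2$ of $B$ on a Hilbert space $H$ agreeing on $\phi(A)$ but not on $B$, they are already distinct morphisms $B\to\KK(H)$ in \cs\ that equalize $\phi$ (nondegeneracy of $\phi$ as a morphism in \cs\ guarantees $\pi_i\circ\phi$ is again nondegenerate). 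The real missing piece is the construction of $\pi_1$ and $\pi_2$, not the bookkeeping around nondegeneracy.
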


The above results lead to an obvious question, which does not seem to be addressed in the literature:
if $\phi:A\to B$ is an epimorphism in~\cs, must $\phi(A)\supset B$?

Factorizability in~\cs\ is also often controlled by kernels:
if $\phi:A\to B$ and $\psi:A\to C$ in~\cs, with $\phi$ surjective, then there is a morphism $\rho$ in~\cs\ making the diagram
\[
\xymatrix{
A \ar[r]^-\phi \ar[dr]_\psi
&B \ar@{-->}[d]^\rho
\\
&C
}
\]
commute if and only if $\ker\phi\subset\ker\psi$, and moreover $\rho$ is unique.
We do not know whether the conclusion still holds if we weaken the surjectivity hypothesis on $\phi$ to epimorphicity of $\phi$ in~\cs.

The above factorizability criterion carries over to \coact\ (a routine diagram chase shows the required equivariance):
if $\phi:(A,\delta)\to (B,\epsilon)$ and $\psi:(A,\delta)\to (C,\gamma)$ in~\coact, with $\phi$ surjective,
then there is a morphism $\rho$ in~\coact\ making the diagram
\[
\xymatrix{
(A,\delta) \ar[r]^-\phi \ar[dr]_\psi
&(B,\epsilon) \ar@{-->}[d]^\rho
\\
&(C,\gamma)
}
\]
if and only if $\ker\phi\subset\ker\psi$,
and moreover $\rho$ is unique.

\begin{lem}
[{\cite[Lemma~1.11]{fullred}}]
Every morphism $\mu:C_0(G)\to B$ in~\cs\ implements an inner coaction $\ad\mu$ of $G$ on $B$, and all inner coactions are normal.
\end{lem}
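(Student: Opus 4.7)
The plan is to build $\ad\mu$ from a canonical implementing unitary and then read off normality from the explicit formula. Recall the canonical unitary $w_G\in M(C_0(G)\otimes C^*(G))$ corresponding to the strictly continuous homomorphism $s\mapsto i_G(s):G\to M(C^*(G))$; multiplicativity of $i_G$ translates into the cocycle identity
$$(\id\otimes\delta_G)(w_G)=(w_G)_{12}(w_G)_{13}$$
in $M(C_0(G)\otimes C^*(G)\otimes C^*(G))$, where $\delta_G$ is the comultiplication on $C^*(G)$. Applying the nondegenerate homomorphism $\mu\otimes\id:C_0(G)\otimes C^*(G)\to B\otimes C^*(G)$ and its canonical extension to multipliers, I set
$$U_\mu:=(\mu\otimes\id)(w_G)\in M(B\otimes C^*(G)),$$
which is unitary since $w_G$ is, and I define $\ad\mu:B\to M(B\otimes C^*(G))$ by $\ad\mu(b)=U_\mu(b\otimes 1)U_\mu^*$.

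The first task is to check that $\ad\mu$ is a coaction. Nondegeneracy of $\ad\mu$ as a $*$-homomorphism is immediate from unitarity of $U_\mu$ and the nondegeneracy of $b\mapsto b\otimes 1$. Applying $\mu\otimes\id\otimes\id$ to the cocycle identity for $w_G$ yields $(\id\otimes\delta_G)(U_\mu)=(U_\mu)_{12}(U_\mu)_{13}$, and a short leg-notation manipulation then gives the coaction identity $(\ad\mu\otimes\id)\circ\ad\mu=(\id\otimes\delta_G)\circ\ad\mu$. Coaction-nondegeneracy, \ie\ density of $\ad\mu(B)(1\otimes C^*(G))$ in $B\otimes C^*(G)$, follows because conjugation by $U_\mu$ is a bijection on $M(B\otimes C^*(G))$ and $(B\otimes 1)(1\otimes C^*(G))$ has dense span in $B\otimes C^*(G)$.

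For normality, I would apply $\id\otimes\lambda$ to the defining formula. Setting $V=(\id\otimes\lambda)(U_\mu)\in M(B\otimes C^*_r(G))$, which is again unitary, I obtain
$$j_B(b)=\bigl((\id\otimes\lambda)\circ\ad\mu\bigr)(b)=V(b\otimes 1)V^*.$$
Since $b\mapsto b\otimes 1:B\to M(B\otimes C^*_r(G))$ is injective and conjugation by a unitary is injective, $j_B$ is injective, which is precisely the normality criterion recorded just before \defnref{maximal normal def}.

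The main obstacle is purely bookkeeping: tracking canonical extensions of morphisms in \cs\ to multiplier algebras and the leg-numbering identity for $w_G$ inside $M(C_0(G)\otimes C^*(G)\otimes C^*(G))$. Once the cocycle identity for $w_G$ and its pushforward to $U_\mu$ are in hand, every remaining verification reduces to a short formal manipulation.
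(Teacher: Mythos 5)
The paper itself offers no proof of this lemma --- it is quoted verbatim from \cite[Lemma~1.11]{fullred}, and all the paper adds is the explicit formula $\ad\mu(b)=\ad\overline{\mu\otimes\id}(w_G)(b\otimes 1)$, which is exactly the unitary $U_\mu$ you construct. Your verification of the coaction identity via the cocycle relation $(\id\otimes\delta_G)(w_G)=(w_G)_{12}(w_G)_{13}$ is correct (pushing it forward with $\overline{\mu\otimes\id\otimes\id}$ does give $(\id\otimes\delta_G)(U_\mu)=(U_\mu)_{12}(U_\mu)_{13}$, and the conjugation bookkeeping then closes the identity), and your normality argument --- apply $\overline{\id\otimes\lambda}$, observe $j_B(b)=V(b\otimes1)V^*$ with $V$ unitary, conclude $j_B$ is injective and invoke the normality criterion --- is the standard one and is sound.

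There is, however, a genuine gap in your treatment of coaction-nondegeneracy. In $\ad\mu(b)(1\otimes c)=U_\mu(b\otimes1)U_\mu^*(1\otimes c)$ the factor $1\otimes c$ sits \emph{outside} the conjugation, so ``conjugation by $U_\mu$ is a bijection'' does not reduce the claim to density of the span of $(B\otimes1)(1\otimes C^*(G))$. Stripping the outer $U_\mu$ (which is legitimate, since left multiplication by a unitary multiplier preserves the ideal $B\otimes C^*(G)$) only reduces the problem to showing that $(B\otimes1)U_\mu^*(1\otimes C^*(G))$ has dense span in $B\otimes C^*(G)$, and that is not a formal consequence of $U_\mu$ being unitary: the subspace in question is a $(B\otimes1)$--$(1\otimes C^*(G))$ sub-bimodule but not obviously an ideal, so unitarity alone gives nothing. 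One really needs the specific form $U_\mu=\overline{\mu\otimes\id}(w_G)$: first check, by a partition-of-unity argument using the norm continuity of $s\mapsto i_G(s)^*c$ for $c\in C^*(G)$, that
\[
\clspn\bigl((C_0(G)\otimes1)\,w_G^*\,(1\otimes C^*(G))\bigr)=C_0(G)\otimes C^*(G),
\]
then push this forward with $\mu\otimes\id$ and use nondegeneracy of $\mu$ (so that $B\mu(C_0(G))$ is dense in $B$) to conclude $\clspn\bigl((B\otimes1)U_\mu^*(1\otimes C^*(G))\bigr)=B\otimes C^*(G)$. With that step supplied, your argument is complete.
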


Of course ``$\ad\mu$'' is an abuse of notation --- it is intended to be in an obvious way dual to the notation $\ad u$ for the inner action determined by a strictly continuous unitary homomorphism $u:G\to M(B)$. The notation stands for the morphism $\ad\mu:B\to B\otimes C^*(G)$ in~\cs\ defined by
\[
\ad\mu(b)=\ad\overline{\mu\otimes\id}(w_G)(b\otimes 1),
\]
where $w_G$ denotes the unitary element of $M(C_0(G)\otimes C^*(G))$ determined by the canonical embedding $G\hookrightarrow M(C^*(G))$.

\begin{lem}
\label{covariant}
Let $(A,\delta)$ be a coaction, and let $\mu:C_0(G)\to B$ and $\pi:A\to B$ in~\cs. Then the pair $(\pi,\mu)$ is a covariant homomorphism of $(A,\delta)$ in $M(B)$ if and only if $\pi$ is $\delta-\ad\mu$ equivariant.
\end{lem}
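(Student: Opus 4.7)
The plan is to observe that this lemma is, in essence, a restatement of one definition in terms of another, so the proof will consist of unpacking both sides and checking they coincide termwise. First, I would recall the standard definition of covariance: the pair $(\pi,\mu)$ is a covariant homomorphism of $(A,\delta)$ into $M(B)$ precisely when $\pi$ and $\mu$ are nondegenerate homomorphisms such that
\[
(\pi\otimes\id)\circ\delta(a) \;=\; \overline{\mu\otimes\id}(w_G)\,\bigl(\pi(a)\otimes 1\bigr)\,\overline{\mu\otimes\id}(w_G)^{*}
\]
for every $a\in A$. Next, I would read off from the display immediately preceding this lemma the formula
\[
\ad\mu(b) \;=\; \overline{\mu\otimes\id}(w_G)\,(b\otimes 1)\,\overline{\mu\otimes\id}(w_G)^{*},
\]
valid initially for $b\in B$ and then, by the canonical extension of nondegenerate morphisms to multiplier algebras, for $b\in M(B)$.

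With these formulas in hand, the proof is immediate: substituting $b = \pi(a) \in M(B)$ into the $\ad\mu$ formula shows that the right-hand side of the covariance equation is precisely $\overline{\ad\mu}\bigl(\pi(a)\bigr)$. Hence the covariance equation for $(\pi,\mu)$ reads
\[
(\pi\otimes\id)\circ\delta(a) \;=\; \overline{\ad\mu}\circ\pi(a) \quad\text{for all } a\in A,
\]
and this is exactly the statement that $\pi\colon (A,\delta)\to (B,\ad\mu)$ is equivariant in \cs. Both implications are contained in this rewriting.

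The only point requiring (very light) care is that $\pi$ and $\ad\mu$ take values in multiplier algebras, so one must justify that the manipulation above is taking place inside $M(B\otimes C^*(G))$ and that $\overline{\ad\mu}\circ\pi = (\ad\mu)\circ \pi$ on the nose via the canonical bar-extensions; this is routine from the properties of extensions of nondegenerate morphisms and of $(\pi\otimes\id)$. I do not anticipate any genuine obstacle: once the two definitions are placed side by side, they are visibly the same equation.
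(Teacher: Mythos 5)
Your proposal is correct: the paper states this lemma without proof, treating it as exactly the definitional unpacking you carry out, namely that the covariance identity $(\pi\otimes\id)\circ\delta(a)=\overline{\mu\otimes\id}(w_G)\bigl(\pi(a)\otimes 1\bigr)\overline{\mu\otimes\id}(w_G)^{*}$ is literally the equation $(\pi\otimes\id)\circ\delta=\overline{\ad\mu}\circ\pi$ once one substitutes $b=\pi(a)$ into the displayed formula for $\ad\mu$ and uses that the canonical strict extension of $\ad\mu$ to $M(B)$ is given by the same conjugation formula. Your remark about the bar-extensions is the right (and only) point of care, and it is handled correctly.
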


We will show presently that the crossed-product functor for coactions is right-adjointable.
Of course, this will follow from the universal property of crossed products.
It will be a little clearer to consider this universality for an individual coaction first.
Let's recall Raeburn's definition of crossed product:

\begin{defn}
\label{crossed product}
Let $(\eta,\nu)$ be a covariant homomorphism of a coaction $(A,\delta)$ in $M(C)$. Then $(C,\eta,\nu)$ is a \emph{crossed product} of $(A,\delta)$ if for every   covariant homomorphism $(\pi,\mu)$ of $(A,\delta)$ in $M(B)$ there is a unique morphism $\rho$ in~\cs\ making the diagram
\begin{equation}
\label{covariance}
\xymatrix{
A \ar[r]^-\eta \ar[dr]_\pi
&C \ar@{-->}[d]^\rho_{!}
&C_0(G) \ar[l]_-\nu \ar[dl]^\mu
\\
&B
}
\end{equation}
commute.
The existence of $\rho$ is expressed by saying that $(\pi,\mu)$ \emph{factors through $(\eta,\nu)$}, and 
existence and uniqueness together are expressed by saying that $(\pi,\mu)$ \emph{factors uniquely through  $(\eta,\nu)$}.
\end{defn}

\begin{rem}
In addition to the above axioms, Raeburn explicitly hypothesizes that the $C^*$-algebra $C$ is generated by products of the form $\eta(a)\nu(f)$ for $a\in A$ and $f\in C_0(G)$. This hypothesis is redundant: the theory of crossed products tells us that if $(C,\eta,\nu)$ and $(D,\sigma,\omega)$ are crossed products of $(A,\delta)$, then there is a unique isomorphism $\theta:C\to D$ such that $\theta\circ\eta=\sigma$ and $\theta\circ\nu=\omega$ in~\cs. Since there is at least \emph{one} crossed product $(C,\eta,\nu)$ for which $C$ is generated by\footnote{in fact is the \emph{closed span of}} the set of products $\eta(A)\nu(C_0(G))$,
it must therefore be true for \emph{every} crossed product $(D,\sigma,\omega)$.
That being said, we can nevertheless turn this redundancy around to find a useful replacement for the uniqueness clause:
\end{rem}

\begin{lem}
\label{generated}
Let $(\eta,\nu)$ be a covariant homomorphism of a coaction $(A,\delta)$ in $M(C)$, and suppose that every covariant homomorphism of $(A,\delta)$ factors through $(\eta,\mu)$. Then $(C,\eta,\nu)$ is a crossed product of $(A,\delta)$ if and only if $C$ is generated by $\eta(A)\nu(C_0(G))$.
\end{lem}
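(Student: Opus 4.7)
The two directions require rather different arguments. For the forward implication I would lean on the standard construction of the crossed product (the one alluded to in the remark preceding the lemma), which by its very definition is the closed linear span of the products $\sigma(A)\omega(C_0(G))$. By uniqueness up to isomorphism of crossed products, if $(C,\eta,\nu)$ satisfies the full universal property, then there is an isomorphism in \cs\ from this standard model $(D,\sigma,\omega)$ onto $(C,\eta,\nu)$ intertwining the structural maps; since a $*$-isomorphism carries generating sets to generating sets, $C$ is generated by $\eta(A)\nu(C_0(G))$.

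For the backward implication, assume $C$ is generated by $\eta(A)\nu(C_0(G))$ and that every covariant homomorphism of $(A,\delta)$ factors through $(\eta,\nu)$. Only uniqueness remains to be established. Suppose $(\pi,\mu)$ is covariant in $M(B)$ and that $\rho_1,\rho_2$ are two morphisms in \cs\ making diagram~\eqref{covariance} commute. Then the canonical extensions $\bar{\rho_i}:M(C)\to M(B)$ agree on $\eta(A)$ (both equal $\pi$ there) and on $\nu(C_0(G))$ (both equal $\mu$ there). Because each $\bar{\rho_i}$ is a $*$-homomorphism, they then agree on every product $\eta(a)\nu(f)$, and hence on every finite sum and product of such elements, i.e.\ on the $*$-subalgebra generated by $\eta(A)\nu(C_0(G))$.

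The final step is to promote this agreement on a $*$-subalgebra to agreement on $C$. The hypothesis that $\eta(A)\nu(C_0(G))$ \emph{generates} $C$ means exactly that this $*$-subalgebra is norm-dense in $C$, so continuity of $\rho_1$ and $\rho_2$ forces $\rho_1=\rho_2$ on $C$. This delivers the required uniqueness and completes the proof that $(C,\eta,\nu)$ is a crossed product.

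The only point that requires a little care is deciding whether ``generated by $\eta(A)\nu(C_0(G))$'' is to be read in the $C^*$-algebraic sense (closure of the $*$-subalgebra they generate) or in the stronger sense used by Raeburn (closed linear span of the products). In fact these coincide here: the covariance identity lets one rewrite $\nu(f)\eta(a)$ as an element of the closed linear span of $\eta(A)\nu(C_0(G))$, so the $*$-subalgebra generated already equals that closed linear span. This is the only nontrivial ingredient, and it is exactly the standard computation underlying the remark that precedes the lemma, so I would simply invoke it rather than redoing it.
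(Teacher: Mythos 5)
Your proposal is correct. The paper in fact states this lemma without proof, but your argument is exactly the intended one: the forward direction is the uniqueness-up-to-isomorphism argument already sketched in the remark immediately preceding the lemma, and the backward direction is the standard observation that, existence of factorizations being hypothesized, uniqueness follows because any two factoring morphisms agree on $\eta(A)$ and $\nu(C_0(G))$, hence on the dense $*$-subalgebra these generate, hence on all of $C$. Your closing remark that the covariance relation makes the closed span of $\eta(A)\nu(C_0(G))$ already a $C^*$-algebra (so the two readings of ``generated'' coincide) is the right point to flag, and invoking it rather than reproving it is appropriate here.
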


We will use \lemref{covariant} 
to show that crossed products give universal morphisms.
We need a functor:

\begin{notn}
$\ad$ denotes the functor that takes an object $(B,\mu)$ of $C_0(G)\dn \cs$ to the object $(B,\ad\mu)$ of~\coact, and takes a morphism $\psi$ in $C_0(G)\dn \cs$ to $\psi$, now regarded as a morphism in~\coact.
\end{notn}

Note that the above definition of $\ad$ makes sense on morphisms, because if $\psi:(B,\mu)\to (C,\nu)$ in $C_0(G)\dn \cs$ then
(computing in the usual category of $C^*$-algebras and $*$-homomorphisms)
\begin{align*}
 \ad\bar{\nu\otimes\id}(w_G)\bigl(\psi(b)\otimes 1\bigr)
&= \bar{\psi\circ\mu\otimes\id}(w_G)\bigl(\psi(b)\otimes 1\bigr)\\
&= \bar{\psi\otimes\id}\bigl(\ad\bar{\mu\otimes\id}(b\otimes 1)\bigr),
\end{align*}
so that $(\ad\nu)\circ\psi = (\psi\otimes\id)\circ(\ad\mu)$ in~\cs. 

\begin{lem}
\label{crossed product universal}
Let $(A,\delta)$ be a coaction of $G$, 
let $(C,\nu)$ be an object in $C_0(G)\dn \cs$, 
and let $\eta:(A,\delta)\to (C,\ad\nu)$ in~\coact.
If $(C,\eta,\nu)$ is a crossed product of $(A,\delta)$ 
then $(C,\nu,\eta)$ is a universal morphism from $(A,\delta)$ to the functor $\ad$.
\end{lem}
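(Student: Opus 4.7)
The goal is to show that $(C,\nu,\eta)$ is initial in the comma category $(A,\delta)\dn\ad$. Unwinding the definition of this comma category, an object is a pair $\bigl((B,\mu),\pi\bigr)$ with $(B,\mu)\in\obj(C_0(G)\dn\cs)$ and $\pi\colon(A,\delta)\to\ad(B,\mu)=(B,\ad\mu)$ a morphism in \coact, and a morphism from $\bigl((C,\nu),\eta\bigr)$ to $\bigl((B,\mu),\pi\bigr)$ is a morphism $\rho\colon(C,\nu)\to(B,\mu)$ in $C_0(G)\dn\cs$ (that is, $\rho\colon C\to B$ in \cs\ with $\rho\circ\nu=\mu$) such that $\ad(\rho)\circ\eta=\pi$ in \coact; since $\ad$ acts as the identity on morphisms, this last condition is just $\rho\circ\eta=\pi$ in \coact.

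So, given an arbitrary object $\bigl((B,\mu),\pi\bigr)$ of $(A,\delta)\dn\ad$, the plan is to produce a unique such $\rho$. First, because $\pi\colon(A,\delta)\to(B,\ad\mu)$ is equivariant, \lemref{covariant} gives that $(\pi,\mu)$ is a covariant homomorphism of $(A,\delta)$ in $M(B)$. Since $(C,\eta,\nu)$ is a crossed product of $(A,\delta)$, the defining universal property (\defnref{crossed product}) provides a unique morphism $\rho\colon C\to B$ in \cs\ making the covariance diagram~\eqref{covariance} commute, i.e., satisfying both
\[
\rho\circ\eta=\pi\and \rho\circ\nu=\mu.
\]

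The second equation says exactly that $\rho\colon(C,\nu)\to(B,\mu)$ is a morphism in $C_0(G)\dn\cs$. Applying the functor $\ad$ then yields a morphism $\rho\colon(C,\ad\nu)\to(B,\ad\mu)$ in \coact\ (this is where the verification that $\ad$ is well-defined on morphisms, recorded just before the lemma, is used); in particular, $\rho$ is automatically $\ad\nu$--$\ad\mu$ equivariant, so the first equation $\rho\circ\eta=\pi$ holds in \coact. Thus $\rho$ is a morphism in $(A,\delta)\dn\ad$ from $\bigl((C,\nu),\eta\bigr)$ to $\bigl((B,\mu),\pi\bigr)$.

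Uniqueness of $\rho$ as a morphism in the comma category is immediate: any such morphism is in particular a morphism $C\to B$ in \cs\ making~\eqref{covariance} commute, and the crossed-product universal property singles out exactly one. There is really no obstacle here; the content is the observation that the two conditions built into a morphism of $(A,\delta)\dn\ad$ — compatibility with $\nu,\mu$ and with $\eta,\pi$ — correspond precisely to the two commutativities in~\eqref{covariance}, with equivariance of $\rho$ supplied for free by functoriality of $\ad$.
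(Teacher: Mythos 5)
Your proof is correct and follows essentially the same route as the paper's: apply \lemref{covariant} to see that $(\pi,\mu)$ is covariant, invoke the crossed-product universal property to get the unique $\rho$ satisfying both commutativities in~\eqref{covariance}, and observe that these two conditions are exactly what a morphism in the comma category $(A,\delta)\dn\ad$ requires, with equivariance of $\rho$ coming from functoriality of $\ad$. Your write-up is somewhat more explicit than the paper's about unwinding the comma-category structure and about why uniqueness transfers, but the argument is the same.
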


\begin{proof}
Let $(B,\mu)$ be an object in $C_0(G)\dn \cs$, and let $\pi:(A,\delta)\to (B,\ad\mu)$ in~\coact.
By \lemref{covariant} the pair $(\pi,\mu)$ is a covariant homomorphism of $(A,\delta)$ in $M(B)$. Thus, since $(C,\eta,\nu)$ is a crossed product of $(A,\delta)$ there is a unique morphism $\rho:C\to B$ in~\cs\ making diagram~\eqref{covariance} commute. Then $\rho:(C,\nu)\to (B,\mu)$ in $C_0(G)\dn \cs$, so $\rho$ is also a morphism from $(C,\ad\nu)$ to $(B,\ad\mu)$ in~\coact, and it is the unique such morphism making the diagram
\begin{equation}
\label{universal ad}
\xymatrix{
(A,\delta) \ar[r]^-\eta \ar[dr]_\pi
&(C,\ad\nu) \ar@{-->}[d]^\rho_{!}
\\
&(B,\ad\mu)
}
\end{equation}
commute. We have shown that $(C,\nu,\eta)$ is a universal morphism from $(A,\delta)$ to $\ad$.
\end{proof}

\begin{q}
Is the converse of the above lemma true? That is, if $(C,\nu,\eta)$ is a universal morphism from $(A,\delta)$ to $\ad$, is $(C,\eta,\nu)$ a crossed product of $(A,\delta)$? The naive approach would be to take any covariant homomorphism $(\pi,\mu)$ of $(A,\delta)$ in $M(B)$, then note that by \lemref{covariant} we have a morphism $\pi:(A,\delta)\to (B,\ad\mu)$ in~\coact, so by universality we have a unique morphism $\rho$ in~\coact\ making the diagram~\eqref{universal ad} commute. But this only says that the coactions $\ad\mu$ and $(\ad\rho)\circ\nu$ on $B$ coincide, which does not imply that $\mu=\rho\circ\nu$.
\end{q}

Now we promote the above universal property to a functor: 
once we choose a universal morphism $(A\times_\delta G,j_G,j_A)$ for each coaction $(A,\delta)$, there is a unique functor from $\coact$ to $C_0(G)\dn\cs$ that takes an object $(A,\delta)$ to $(A\times_\delta G,j_G)$ and is a left adjoint to the functor $\ad$.

\begin{defn}
The above functor, taking $(A,\delta)$ to $(A\times_\delta G,j_G)$, is the
\emph{crossed-product functor}, denoted by
\[
\cp:\coact\to C_0(G)\dn \cs.
\]
The value of $\cp$ on a morphism $\phi$ in~\coact\ is written $\phi\times G$.
\end{defn}

The above discussion can be summarized by:

\begin{cor}
$\cp$ is a left adjoint for $\ad$, with unit
$j:1_{\coact}\to \ad\circ\cp$ given by
\[
j_{(A,\delta)}=j_A:(A,\delta)\to (A\times_\delta G,j_G).
\]
\end{cor}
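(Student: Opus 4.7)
The plan is to invoke the general categorical principle, recorded in Section~\ref{prelim}, that says: given a functor $G:\DD\to\CC$, if for every $x\in\obj\CC$ we have a universal morphism $(Fx,\eta_x)$ from $x$ to $G$, then the assignment $x\mapsto Fx$ extends uniquely to a functor $F:\CC\to\DD$ such that $\eta:1_\CC\to G\circ F$ is a natural transformation and $F\dashv G$ with unit $\eta$.

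Applying this with $G=\ad$ and $\CC=\coact$, $\DD=C_0(G)\dn\cs$, Lemma~\ref{crossed product universal} already supplies us with a universal morphism $\bigl((A\times_\delta G,j_G),j_A\bigr)$ from each object $(A,\delta)$ of~\coact\ to~$\ad$; here we are identifying, as the statement requires, the pair $j_{(A,\delta)}=j_A:(A,\delta)\to(A\times_\delta G,j_G)$ with a morphism in \coact\ under the identification given by applying $\ad$ to the object $(A\times_\delta G,j_G)$. The general principle then yields a unique functor $\cp:\coact\to C_0(G)\dn\cs$ extending the object map $(A,\delta)\mapsto (A\times_\delta G,j_G)$ such that $j:1_{\coact}\to\ad\circ\cp$ is natural and $\cp\dashv\ad$ with unit~$j$.

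It remains only to verify that the morphism part of this abstractly-defined $\cp$ agrees with the usual crossed-product-of-morphisms construction, i.e., that for any $\phi:(A,\delta)\to(B,\epsilon)$ in \coact\ the morphism $\cp\phi$ coincides with $\phi\times G$ as defined in the existing literature. By construction of the extension, $\cp\phi$ is the unique morphism in $C_0(G)\dn\cs$ making
\[
\xymatrix{
(A,\delta) \ar[r]^-{j_A} \ar[d]_\phi & (A\times_\delta G,\ad j_G) \ar@{-->}[d]^{\cp\phi} \\
(B,\epsilon) \ar[r]_-{j_B} & (B\times_\epsilon G,\ad j_G)
}
\]
commute in \coact, which is precisely the standard defining property of $\phi\times G$. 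The only slightly delicate point, which I would spell out carefully, is the identification in Lemma~\ref{crossed product universal}: one must check that $j_A$ really does define a morphism in \coact\ from $(A,\delta)$ to $(A\times_\delta G,\ad j_G)$, i.e., that $j_A$ is $\delta-\ad j_G$ equivariant; but this is exactly what Lemma~\ref{covariant} gives us, since $(j_A,j_G)$ is the canonical covariant homomorphism. No step presents a real obstacle; the whole argument is just an application of the general machinery of adjoints to the universal property established in Lemma~\ref{crossed product universal}.
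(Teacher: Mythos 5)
Your proposal is correct and takes essentially the same route as the paper: the corollary is precisely the combination of \lemref{crossed product universal} (which exhibits $((A\times_\delta G,j_G),j_A)$ as a universal morphism from $(A,\delta)$ to $\ad$) with the general principle recalled in the preliminaries that a choice of universal morphism from each object to a functor produces a left adjoint with the given unit. Your final compatibility check is harmless but not needed, since the paper \emph{defines} $\phi\times G$ as the value of $\cp$ on the morphism $\phi$.
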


The following easy observation is another summary of the above discussion, and follows from essential uniqueness of universal morphisms:

\begin{cor}
\label{recognize crossed product}
Let 
$(\eta,\nu)$ be a covariant homomorphism of a coaction $(A,\delta)$ in $M(C)$.
Then $(C,\nu,\eta)$ is a crossed product of $(A,\delta)$ if and only if
\[
\eta\times\nu:A\times_\delta G\to C
\]
is an isomorphism in~\cs.
\end{cor}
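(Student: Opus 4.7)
My plan is to use the universal property of $A\times_\delta G$ as a crossed product together with the essential uniqueness of universal morphisms. Recall that by the definition of the crossed-product functor applied to the covariant homomorphism $(\eta,\nu)$ of $(A,\delta)$ in~$M(C)$, the morphism $\eta\times\nu\colon A\times_\delta G\to C$ in \cs\ is characterized as the unique one satisfying $(\eta\times\nu)\circ j_A=\eta$ and $(\eta\times\nu)\circ j_G=\nu$.

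For the forward direction, assume $(C,\eta,\nu)$ is a crossed product of $(A,\delta)$. By \lemref{crossed product universal}, $(C,\nu,\eta)$ is then a universal morphism from $(A,\delta)$ to $\ad$; but $(A\times_\delta G,j_G,j_A)$ is also such a universal morphism, so essential uniqueness of initial objects in the comma category $(A,\delta)\dn\ad$ produces a unique isomorphism between them. Unwinding the comma-category morphisms, this isomorphism is precisely $\eta\times\nu$, which is therefore an isomorphism in~\cs. (Equivalently, one can build the inverse by hand: apply the crossed-product property of $(C,\eta,\nu)$ to the covariant homomorphism $(j_A,j_G)$ to get a morphism $\rho\colon C\to A\times_\delta G$, and then the two uniqueness clauses force $\rho\circ(\eta\times\nu)=\id$ and $(\eta\times\nu)\circ\rho=\id$.)

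For the backward direction, assume $\eta\times\nu$ is an isomorphism in \cs, and let $(\pi,\mu)$ be any covariant homomorphism of $(A,\delta)$ in $M(B)$. The crossed-product property of $(A\times_\delta G,j_A,j_G)$ gives a unique $\pi\times\mu\colon A\times_\delta G\to B$ with $(\pi\times\mu)\circ j_A=\pi$ and $(\pi\times\mu)\circ j_G=\mu$. Set $\rho:=(\pi\times\mu)\circ(\eta\times\nu)\inv\colon C\to B$; composing with $\eta$ and $\nu$ and using the defining identities of $\eta\times\nu$ yields $\rho\circ\eta=\pi$ and $\rho\circ\nu=\mu$. Uniqueness of $\rho$ follows because any $\rho'$ with these properties satisfies $\rho'\circ(\eta\times\nu)\circ j_A=\pi$ and $\rho'\circ(\eta\times\nu)\circ j_G=\mu$, hence $\rho'\circ(\eta\times\nu)=\pi\times\mu$ by uniqueness of the latter, so $\rho'=\rho$.

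I do not foresee a real obstacle, since everything is formal; the one point that warrants care is that the backward direction cannot be obtained by simply dualising the forward one through \lemref{crossed product universal} (whose converse is flagged as open in the preceding question), so the crossed-product property of $(C,\eta,\nu)$ really does have to be reconstructed from scratch using the invertibility of $\eta\times\nu$, as above.
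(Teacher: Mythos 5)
Your argument is correct and matches the paper's approach: the paper dispatches this corollary with the single remark that it ``follows from essential uniqueness of universal morphisms,'' which is exactly your forward direction, and your backward direction is the routine transport of the crossed-product property along the isomorphism $\eta\times\nu$ that the paper leaves implicit. Your closing caveat --- that one cannot shortcut the converse via \lemref{crossed product universal} because its converse is the open \qref{universal ad}-adjacent Question --- is well taken and shows you used the uniqueness clause of \defnref{crossed product} for $(A\times_\delta G,j_A,j_G)$ where it is actually needed.
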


\begin{lem}
[``Epi-mono factorization'' in~\coact]
\label{epi-mono}
If $\phi:(A,\delta)\to (B,\epsilon)$ in~\coact, then there is a unique coaction $\gamma$ on $\phi(A)$ such that the diagram
\[
\xymatrix{
(A,\delta) \ar[r]^\phi \ar[dr]_\phi
&(\phi(A),\gamma) \ar@{^(->}[d]
\\
&(B,\epsilon)
}
\]
commutes in~\coact. Moreover, $\gamma$ is normal if $\epsilon$ is.
\end{lem}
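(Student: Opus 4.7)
The plan is to build $\gamma$ by applying the factorization criterion in \cs\ (stated in the appendix just before this lemma) to the surjective part of the epi-mono factorization of~$\phi$.

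First I would factor $\phi$ inside the category \cs. Since every $*$-homomorphism between $C^*$-algebras has closed image, the image $A_0:=\phi(A)\subseteq M(B)$ is a $C^*$-subalgebra, and the nondegeneracy condition $\phi(A)B=B$ says exactly that the inclusion $\iota:A_0\hookrightarrow B$ is itself a morphism in \cs. Writing $\phi':A\to A_0$ for the corestriction, we have $\phi=\iota\circ\phi'$ with $\phi'$ surjective and $\iota$ injective.

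Next I would produce $\gamma$. Consider the composite $(\phi'\otimes\id)\circ\delta:A\to M(A_0\otimes C^*(G))$; this is a morphism in \cs\ because $\phi'\otimes\id$ is. To factor it through $\phi'$ via the appendix's criterion, I need $\ker\phi'\subseteq\ker((\phi'\otimes\id)\circ\delta)$. Since $\overline{\iota\otimes\id}$ is injective (being the multiplier extension of the injective nondegenerate homomorphism $\iota\otimes\id$), vanishing of $(\phi'\otimes\id)\circ\delta$ on $a$ is equivalent to vanishing of $(\iota\otimes\id)\circ(\phi'\otimes\id)\circ\delta\,(a)=(\phi\otimes\id)\circ\delta(a)=\epsilon\circ\phi(a)$, and this is automatic when $a\in\ker\phi'=\ker\phi$. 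The factorization criterion then yields a unique morphism $\gamma:A_0\to M(A_0\otimes C^*(G))$ in \cs\ with $\gamma\circ\phi'=(\phi'\otimes\id)\circ\delta$. By construction, $\phi':(A,\delta)\to(A_0,\gamma)$ is equivariant, and post-composing with $\iota\otimes\id$ together with surjectivity of $\phi'$ forces $(\iota\otimes\id)\circ\gamma=\epsilon\circ\iota$, so the inclusion is equivariant too.

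To finish verifying that $\gamma$ really is a coaction, I would check the coaction identity and the coaction-nondegeneracy condition by precomposing with the surjection $\phi'$: the coaction identity
\[
(\gamma\otimes\id)\circ\gamma\circ\phi'=(\phi'\otimes\id\otimes\id)\circ(\delta\otimes\id)\circ\delta=(\phi'\otimes\id\otimes\id)\circ(\id\otimes\delta_G)\circ\delta=(\id\otimes\delta_G)\circ\gamma\circ\phi'
\]
collapses via surjectivity of $\phi'$, and the density condition for $\gamma$ transfers from that for $\delta$ by the same device. Uniqueness of $\gamma$ is immediate from the uniqueness clause of the factorization criterion (any competitor would have to satisfy the same factorization equation).

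For the normality statement, assume $\epsilon$ is normal, so $j_B$ is injective. Using $(\iota\otimes\id)\circ\gamma=\epsilon\circ\iota$ and $j_{(\cdot)}=(\id\otimes\lambda)\circ(\cdot)$, applying $\id\otimes\lambda$ to both sides gives $\overline{\iota\otimes\id_{\KK}}\circ j_{A_0}=j_B\circ\iota$. Both $\iota$ and $\overline{\iota\otimes\id_{\KK}}$ are injective (the latter because $\iota\otimes\id_{\KK}$ is injective and nondegenerate, so its multiplier extension is injective), and $j_B$ is injective by hypothesis; hence $j_{A_0}$ is injective, so $\gamma$ is normal. I expect the main technical nuisance to be keeping track of multiplier extensions carefully enough to justify the injectivity of $\overline{\iota\otimes\id_{\KK}}$ and to conclude the equivariance of $\iota$ from its post-composition with $\phi'$; everything else is a direct application of previously established machinery.
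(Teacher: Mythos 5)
Your argument is correct, but it reaches $\gamma$ by a genuinely different route than the paper. The paper defines $\gamma$ directly as the restriction $\bar\epsilon|_{\phi(A)}$, uses equivariance to check that $\bar\epsilon(\phi(A))=\bar{\phi\otimes\id}(\delta(A))\subset M(\phi(A)\otimes C^*(G))$, and then verifies the coaction axioms on the nose; you instead pass through the epi-mono factorization $\phi=\iota\circ\phi'$ in \cs\ and invoke the kernel-based factorization criterion to push $\delta$ forward through the surjection $\phi'$. The two constructions produce the same map --- your identity $\gamma\circ\phi'=(\phi'\otimes\id)\circ\delta$ is exactly the paper's computation of $\bar\epsilon$ on $\phi(A)$ --- but yours makes the uniqueness clause fall out of the factorization criterion for free, at the cost of having to verify the kernel inclusion, which you do correctly via injectivity of $\overline{\iota\otimes\id}$. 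For normality the divergence is more substantial: the paper applies functoriality of the crossed product to the inclusion $\iota:(\phi(A),\gamma)\hookrightarrow(B,\epsilon)$ and reads injectivity of $j_{\phi(A)}$ off the naturality square $j_B\circ\iota=(\iota\times G)\circ j_{\phi(A)}$, whereas you work with the concrete formula $j_A=(\id\otimes\lambda)\circ\delta$ and the injectivity of $\overline{\iota\otimes\id_\KK}$; both are valid, and yours has the mild advantage of not invoking the crossed-product functor at all. One small addition you should make: under the conventions of this paper a coaction is required to be injective, and the paper checks this for $\gamma$ explicitly; in your setup it is immediate from $(\iota\otimes\id)\circ\gamma=\epsilon\circ\iota$ together with the injectivity of $\bar\epsilon$ on $M(B)$ and of $\iota$, so it costs you one line.
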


\begin{proof}
We claim that $\gamma=\bar\epsilon|_{\phi(A)}$ does the job.
First, note that
\begin{align*}
\bar\epsilon(\phi(A))
&=\bar{\phi\otimes\id}(\delta(A))
\\&\subset \bar{\phi\otimes\id}(M(A\otimes G^*(G))
\\&\subset M(\phi(A)\otimes C^*(G)).
\end{align*}
Further, 
$\gamma$ is injective since $\bar\epsilon$ is injective on $M(B)$,
and
$\gamma$ satisfies the coaction identity because $\epsilon$ does. Finally,
\begin{align*}
\gamma\bigl(\phi(A)\bigr)\bigl(1_{M(\phi(A))}\otimes C^*(G)\bigr)
&=\bar\epsilon\bigl(\phi(A)\bigr)\bigl(1_{M(B)}\otimes C^*(G)\bigr)
\\&=\bar{\phi\otimes\id}\bigl(\delta(A)\bigr)\bigl(1_{M(B)}\otimes C^*(G)\bigr)
\\&=\bar{\phi\otimes\id}\Bigl(\delta(A)\bigl(1_{M(A)}\otimes C^*(G)\bigl)\Bigr),
\end{align*}
which has closed span $\phi(A)\otimes C^*(G)$ since $\delta(A)(1\otimes C^*(G))$ has closed span $A\otimes C^*(G)$.

For the last part, suppose that $\epsilon$ is normal. The inclusion map gives a morphism $\iota:(\phi(A),\gamma)\hookrightarrow (B,\epsilon)$, so we have a commutative diagram
\[
\xymatrix{
\phi(A) \ar[r]^\iota \ar[d]_{j_{\phi(A)}}
&B \ar[d]^{j_B}
\\
\phi(A)\times_\gamma G \ar[r]_{\iota\times G}
&B\times_\epsilon G
}
\]
in~\cs.
Since $j_B$ and $\iota$ are injective, so is $j_{\phi(A)}$, so $\gamma$ is normal.
\end{proof}

As a consequence of the above, we have another effective means to recognize normal coactions:

\begin{cor}
[{\cite[Lemma~2.2 and Proposition~2.3]{fullred}}]
\label{unitary}
If $(\pi,\mu)$ is a covariant homomorphism of $(A,\delta)$ in $M(B)$, then $\bar{\ad\mu}$ restricts to a normal coaction $\delta^\mu$ on $\pi(A)$, and then $\pi:(A,\delta)\to (\pi(A),\delta^\mu)$ in~\coact. Moreover, if $\pi$ is injective then it is an isomorphism of $(A,\delta)$ onto $(\pi(A),\delta^\mu)$. Thus, $(A,\delta)$ is normal if and only if it has a covariant homomorphism $(\pi,\mu)$ with $\pi$ injective.
\end{cor}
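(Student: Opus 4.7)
My plan is to obtain the result as a direct application of the preceding machinery, chaining together Lemma~\ref{covariant}, the inner-coaction lemma, and Lemma~\ref{epi-mono}.

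First, since $(\pi,\mu)$ is a covariant homomorphism of $(A,\delta)$ in $M(B)$, \lemref{covariant} tells us that $\pi:(A,\delta)\to (B,\ad\mu)$ is a morphism in \coact. The inner-coaction lemma ensures that $\ad\mu$ is a (normal) coaction on $B$, so we really have a morphism of coactions to which the epi-mono factorization applies.

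Next, I apply \lemref{epi-mono} to this morphism: there is a unique coaction $\gamma$ on $\pi(A)$ (namely $\gamma=\bar{\ad\mu}|_{\pi(A)}$) making the inclusion $(\pi(A),\gamma)\hookrightarrow (B,\ad\mu)$ equivariant and making $\pi$ factor as $(A,\delta)\to (\pi(A),\gamma)$ in \coact. Set $\delta^\mu=\gamma$. Since $\ad\mu$ is normal, the ``moreover'' clause of \lemref{epi-mono} guarantees that $\delta^\mu$ is normal too, which gives the first assertion.

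For the second assertion, if $\pi$ is injective then the morphism $\pi:(A,\delta)\to (\pi(A),\delta^\mu)$ in \coact\ is a bijective $*$-homomorphism onto its image, hence a $C^*$-algebra isomorphism, and equivariance has already been established, so it is an isomorphism in \coact. For the final biconditional, one direction is immediate: if $\pi$ is injective then $(A,\delta)\cong (\pi(A),\delta^\mu)$ in \coact\ and the latter is normal, so $(A,\delta)$ is normal. The converse is the very definition of normality recalled earlier in the paper: when $(A,\delta)$ is normal, the canonical covariant homomorphism $(j_A,j_G)$ of $(A,\delta)$ in $M(A\times_\delta G)$ has $j_A$ injective, providing the required covariant pair. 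I expect no real obstacle here; the only point requiring a moment's thought is verifying that the inclusion $(\pi(A),\delta^\mu)\hookrightarrow (B,\ad\mu)$ transports normality down to $\delta^\mu$, but this is exactly what the diagram-chase at the end of the proof of \lemref{epi-mono} delivers.
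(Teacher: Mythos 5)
Your proposal is correct and follows exactly the route the paper intends: the corollary is presented ``as a consequence of the above,'' i.e., of Lemma~\ref{epi-mono} applied to the morphism $\pi:(A,\delta)\to(B,\ad\mu)$ furnished by Lemma~\ref{covariant} and the normality of inner coactions, with the final biconditional handled via the $j_A$-injectivity criterion just as you do. No gaps.
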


\begin{cor}
[{\cite[Proposition~2.5]{fullred}}]
\label{j_A crossed}
For every coaction $(A,\delta)$,
the morphism
\[
j_A\times G:A\times_\delta G\to j_A(A)\times_{\ad j_G} G
\]
is an isomorphism.
\end{cor}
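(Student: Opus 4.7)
The plan is to deduce this from \corref{recognize crossed product} by exhibiting $j_A(A)\times_{\ad j_G} G$ as a crossed product of $(A,\delta)$. To set this up, let $(j'_{j_A(A)}, j'_G)$ denote the canonical covariant pair associated with the crossed product of the normal coaction $(j_A(A), \ad j_G)$, which is a valid coaction by \corref{unitary}. Since $j_A:(A,\delta)\to(j_A(A),\ad j_G)$ is a morphism in \coact, \lemref{covariant} applied twice will show that $(j'_{j_A(A)}\circ j_A,\, j'_G)$ is a covariant homomorphism of $(A,\delta)$ in $M(j_A(A)\times_{\ad j_G} G)$, and by construction of the functor $\cp$ the induced morphism $(j'_{j_A(A)}\circ j_A)\times j'_G$ will coincide with $j_A\times G$. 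Thus \corref{recognize crossed product} will reduce the claim to verifying that $(j_A(A)\times_{\ad j_G} G,\, j'_G,\, j'_{j_A(A)}\circ j_A)$ is a crossed product of $(A,\delta)$.

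To check this, I will apply \lemref{generated}. The generation condition is immediate: $j_A: A\to j_A(A)$ is surjective by construction, so $(j'_{j_A(A)}\circ j_A)(A) = j'_{j_A(A)}(j_A(A))$, and the products with $j'_G(C_0(G))$ have closed span equal to $j_A(A)\times_{\ad j_G} G$ by the defining property of the crossed product. For the universal factorization, I will take an arbitrary covariant homomorphism $(\pi,\mu)$ of $(A,\delta)$ in $M(B)$ and show it factors through $(j'_{j_A(A)}\circ j_A,\, j'_G)$. The factorization of $(\pi,\mu)$ through $(A\times_\delta G, j_A, j_G)$ already gives $\pi = (\pi\times\mu)\circ j_A$, forcing $\ker j_A\subseteq \ker\pi$, so $\pi$ descends to a unique nondegenerate homomorphism $\bar\pi:j_A(A)\to M(B)$ with $\bar\pi\circ j_A=\pi$.

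The main obstacle will be verifying that $(\bar\pi,\mu)$ is itself covariant for the coaction $(j_A(A),\ad j_G)$. By \lemref{covariant} this amounts to $\bar\pi$ being $\ad j_G$-$\ad\mu$ equivariant; since $j_A$ is surjective and $\delta$-$\ad j_G$ equivariant (by \corref{unitary}), while $\pi = \bar\pi\circ j_A$ is $\delta$-$\ad\mu$ equivariant (by covariance of $(\pi,\mu)$ and \lemref{covariant}), the desired equivariance of $\bar\pi$ should transfer along the surjection $j_A$. Once this covariance is in place, the universal property of the crossed product $(j_A(A)\times_{\ad j_G} G, j'_{j_A(A)}, j'_G)$ will yield a unique morphism $\rho$ in \cs\ with $\rho\circ j'_{j_A(A)} = \bar\pi$ and $\rho\circ j'_G = \mu$; pre-composing the first equation with $j_A$ then gives $\rho\circ (j'_{j_A(A)}\circ j_A) = \pi$, delivering the required factorization and completing the argument via \corref{recognize crossed product}.
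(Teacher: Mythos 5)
Your proposal is correct and takes essentially the same route as the paper: both reduce via \lemref{generated} and \corref{recognize crossed product} to showing that every covariant homomorphism $(\pi,\mu)$ of $(A,\delta)$ factors through $(j_{j_A(A)}\circ j_A, j_G)$, using surjectivity of $j_A$ together with $\ker j_A\subseteq\ker\pi$ to descend $\pi$ to $j_A(A)$. The only cosmetic difference is that the paper obtains the descended map as $\overline{\pi\times\mu}|_{j_A(A)}$ and gets its equivariance from \corref{unitary}, whereas you verify equivariance by a direct transfer along the surjection $j_A$ — both are fine.
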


\begin{proof}
Since $j_A:A\to j_A(A)$ is surjective, by \lemref{generated} and \corref{recognize crossed product} it suffices to show that every covariant homomorphism 
of $(A,\delta)$ factors through $(j_{j_A(A)}\circ j_A,j_G)$, and then by \lemref{covariant} it suffices to show that
for
every morphism $\pi:(A,\delta)\to (B,\ad\mu)$ to an inner coaction 
there is a morphism $\rho$ making the diagram
\begin{equation}
\label{j_A factor}
\xymatrix{
(A,\delta) \ar[r]^-{j_A} \ar[dr]_\pi
&(j_A(A),\ad j_G) \ar@{-->}[d]^\rho
\\
&(B,\ad\mu)
}
\end{equation}
commute in~\coact.
Define
\[
\rho=\bar{\pi\times\mu}|_{j_A(A)}:j_A(A)\to M(B).
\]
Applying \corref{unitary} to the morphism
\[
j_A:(A,\delta)\to (A\times_\delta G,\ad j_G),
\]
then post-composing with the morphism
\[
\pi\times\mu:(A\times_\delta G,\ad j_G)\to (B,\ad\mu),
\]
we see that 
$\rho$ 
gives a 
suitable
morphism in~\coact.
\end{proof}



\providecommand{\bysame}{\leavevmode\hbox to3em{\hrulefill}\thinspace}
\providecommand{\MR}{\relax\ifhmode\unskip\space\fi MR }
\providecommand{\MRhref}[2]{%
  \href{http://www.ams.org/mathscinet-getitem?mr=#1}{#2}
}
\providecommand{\href}[2]{#2}

\end{document}